\newtheorem{theorem}{Theorem}[section]
\newtheorem{lemma}[theorem]{Lemma}
\newtheorem{corollary}[theorem]{Corollary}
\newtheorem{proposition}[theorem]{Proposition}
\theoremstyle{definition}
\newtheorem{definition}[theorem]{Definition}
\theoremstyle{remark}
\newtheorem{remark}[theorem]{Remark}
\newtheorem{example}[theorem]{Example}
\numberwithin{equation}{section}
\newcommand*{\RR}{\mathbb{R}}
\newcommand*{\NN}{\mathbb{N}}
\DeclareMathOperator{\EE}{\mathbb{E}} 
\DeclareMathOperator{\PP}{\mathbb{P}} 
\DeclareMathOperator{\Ent}{Ent} 
\DeclareMathOperator{\Var}{Var} 
\DeclareMathOperator{\calE}{\mathcal{E}} 
\DeclareMathOperator{\calF}{\mathcal{F}} 
\DeclareMathOperator{\calG}{\mathcal{G}} 
\DeclareMathOperator{\calBn}{\mathcal{B}_n} 
\DeclareMathOperator{\calHd}{\mathcal{H}_d} 
\DeclareMathOperator{\supp}{supp}	
\DeclareMathOperator{\Unif}{Unif}	
\DeclareMathOperator{\argmin}{argmin}	
\DeclareMathOperator{\conv}{conv}	
\newcommand*{\ind}[1]{\mathbf{1}_{#1}}	
\newcommand*{\indbr}[1]{\ind{\{#1\}}}	
\newcommand{\Prob}[1]{\mathbb{P}\left( #1 \right)} 
\DeclarePairedDelimiter\norm{\Vert}{\Vert}
\DeclarePairedDelimiter\abs{\vert}{\vert}
\DeclarePairedDelimiter\ceil{\lceil}{\rceil}
\newcommand{\ub}[1]{^{(#1)}}
\def\ii{{\bf i}}
\title[Concentration for negatively dependent binary random variables]{Concentration inequalities for some negatively dependent binary random variables}
\author{Rados{\l}aw Adamczak}
\author{Bart{\l}omiej Polaczyk}
\address{Institute of Mathematics, University of Warsaw, Poland}
\email{R.Adamczak, B.Polaczyk @mimuw.edu.pl}
\date{Last changes: \today}
\thanks{Research partially supported by the National Science Centre, Poland, via the Sonata Bis grant no.\ 2015/18/E/ST1/00214 (RA) and the Preludium grant no.\ 2020/37/N/ST1/02667 (BP)}
\begin{document}

\begin{abstract}
We investigate concentration properties of functions of random vectors with values in the discrete cube, satisfying the stochastic covering property (SCP) or the strong Rayleigh property (SRP).

Our result for SCP measures include subgaussian inequalities of bounded-difference type extending  classical results by Pemantle and Peres and their counterparts for matrix-valued setting strengthening recent estimates by Aoun, Banna and Youssef. Under a stronger assumption of the SRP we obtain Bernstein-type inequalities for matrix-valued functions, generalizing recent bounds for linear combinations of positive definite matrices due to Kyng and Song.

We also treat in detail the special case of independent Bernoulli random variables conditioned on their sum for which we obtain strengthened estimates, deriving in particular modified log-Sobolev inequalities, Talagrand's convex distance inequality and, as corollaries, concentration results for convex functions and polynomials, as well as improved estimates for matrix-valued functions. These results generalize inequalities for the uniform measure on slices of the discrete cube, studied extensively by many authors. Our approach is based on recent results by Hermon and Salez and a general framework involving modified log-Sobolev inequalities on the discrete cube, which is of independent interest.

\medskip

\noindent Keywords: concentration of measure, strong Rayleigh measure, stochastic covering property

\noindent  AMS Classification: 60E15, 60B20, 60J28

\end{abstract}

\maketitle
\section{Introduction}
Investigating families of binary random variables with negatively dependent coordinates is an important problem from the point of view of computer science and combinatorics, which in the recent years has attracted considerable attention, see, e.g., \cite{MR1757964,MR1777538,MR2476782,MR3197973,MR3899605,garbe2018concentration,MR4010964,MR4003314,MR4108222,kathuria2020matrix}.
A wide and important class of such variables is constituted by those satisfying the \textit{strong Rayleigh property} (abbrev. SRP) introduced by Borcea et al.~\cite{MR2476782}.
More precisely, a probability measure $\pi$ on the hypercube $\calBn := \{0,1\}^n$ satisfies the SRP if its generating polynomial
\[
	\mathbb{C}^n \ni z \mapsto \sum_{x\in \calBn} \pi(x)\prod_{i=1}^n z_i^{x_i}
\]
has no roots $z$ whose all coordinates lie in the (strict) upper half-plane.
The examples of such measures are, e.g., the law of independent Bernoulli random variables conditioned on their sum, determinantal measures, uniform measure on the bases of balanced matroids, laws of point processes or measures obtained by running exclusion dynamics on the cube, cf. Pemantle and Peres~\cite{MR3197973}.

The main purpose of this article is to deepen the understanding of the concentration of measure phenomenon in the context of strong Rayleigh distributions and related classes of probability measures on the discrete cube.
In some of our considerations we will exploit only a more general notion of the \textit{stochastic covering property} (abbrev. SCP, cf.~Definition~\ref{d:scp}) introduced by Pemantle and Peres~\cite{MR3197973}, since this condition already turns out to provide a useful framework for proving concentration results~\cite{MR3197973,hermon2019modified,MR4108222,MR3899605,kathuria2020matrix,adamczak2020modified}. On the other hand for some more specialized inequalities we will restrict our attention to independent Bernoulli variables conditioned on their sum taking some fixed value. Distributions of this type generalize the uniform measure on slices of the discrete cube, related to the Bernoulli--Laplace model, which has been extensively studied, e.g., by Lee--Yau, Bobkov--Tetali, Gao--Quastel \cite{MR1675008,MR2283379, MR2023890} and more recently by Samson \cite{MR3690287} and Sambale--Sinulis \cite{sambale2020concentration}. The non-uniform distribution given by conditioned Bernoulli variables has found applications, e.g., in survey sampling being a  model of sampling without replacement from a finite population, with prescribed inclusion probabilities, which maximizes the entropy (often referred to as \emph{conditional Poisson sampling}). We refer to \cite{MR1488647, MR1790614, MR1311090,MR2225036,MR4010964} for properties and applications of this family of distributions.

\subsection{State of the art}
The landmark paper that initiated the study of concentration phenomenon implied by the SCP is due to Pematle and Peres~\cite{MR3197973} who, using the martingale method, proved a sub-Gaussian concentration bound for measures satisfying the SCP and functions that are Lipschitz with respect to the Hamming distance $d_H(x,y) = \sum_i \ind{x_i\neq y_i}$.
Recently, Hermon and Salez~\cite{hermon2019modified}, building on the works~\cite{MR1233852,MR2099650}, retrieved this estimate by proving that the SCP implies the modified log-Sobolev inequality.

These findings in terms of concentration of measure can be summarized as follows (we use the notation $\pi(f):=\int f\,d\pi$ for a probability measure $\pi$ on $\calBn$ and $f\colon \calBn\to\RR$).
\begin{theorem}[Pemantle--Peres~\cite{MR3197973}]\label{T:PP-main}
For a probability measure $\pi$ on $\calBn$ satisfying the SCP and any $f\colon\calBn\to\RR$ such that
\[
	\abs{f(x)-f(y)}
	\le
	d_H(x,y)
	\quad
	\forall\, x,y\in\calBn
\]
the following estimate holds for all $t>0$
\begin{equation}\label{eq:PP-main}
\pi\big(f > \pi(f) + t \big)
\le
\exp(-t^2/8n).
\end{equation}
If $\pi$ is $k$-homogeneous (i.e., it is supported on the set of binary vectors with exactly $k$ coefficients equal to one), then $n$ in the above expression can be replaced with $k$.
\end{theorem}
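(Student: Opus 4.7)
The plan is to carry out the classical Doob-martingale argument. Fix an arbitrary ordering of the coordinates, let $X\sim\pi$, and set $\calF_i := \sigma(X_1,\ldots,X_i)$. Consider the Doob martingale $M_i := \EE[f(X)\mid \calF_i]$, which interpolates between $M_0 = \pi(f)$ and $M_n = f(X)$. The task reduces to bounding the almost-sure martingale differences $|M_i-M_{i-1}|$ and then invoking the Azuma--Hoeffding inequality.

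The key computation goes as follows. Writing $p_i := \PP(X_i = 1\mid \calF_{i-1})$ and $a_\varepsilon := \EE[f(X) \mid \calF_{i-1}, X_i = \varepsilon]$ for $\varepsilon\in\{0,1\}$, one has $M_{i-1} = (1-p_i)a_0 + p_i a_1$ and $M_i = a_{X_i}$, so that
\[
	M_i - M_{i-1} = (X_i - p_i)(a_1-a_0).
\]
It therefore suffices to show $|a_1-a_0|\le 2$. This is exactly where the SCP enters: under SCP, the two conditional laws of $(X_{i+1},\ldots,X_n)$ given $\calF_{i-1}$ with $X_i=0$ versus $X_i=1$ can be coupled so as to differ in at most one coordinate. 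Including the flipped coordinate $i$ itself, the full vectors under the two conditionings are thus at Hamming distance at most two in the coupling, and the $1$-Lipschitz assumption on $f$ forces $|a_1-a_0|\le 2$ and hence $|M_i-M_{i-1}|\le 2$ almost surely. Azuma--Hoeffding with total squared-increment bound $4n$ then delivers $\pi(f>\pi(f)+t)\le\exp(-t^2/(8n))$.

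For the $k$-homogeneous refinement, the uniform bound $|M_i-M_{i-1}|\le 2$ at every step is wasteful. Heuristically, once $k$ of the revealed coordinates already equal $1$ (or $n-k$ equal $0$), the remaining $p_j$ are deterministic and the martingale is frozen, so only about $k$ of the steps contribute. The natural way to encode this is to switch to the conditional-variance version of Azuma--Hoeffding, bounding
\[
	\sum_{i=1}^n\EE[(M_i-M_{i-1})^2\mid\calF_{i-1}]\le 4\sum_{i=1}^n p_i(1-p_i)\le 4\sum_{i=1}^n p_i,
\]
and using $\sum_i X_i = k$ almost surely together with $\EE[\sum_i p_i]=k$ to replace $n$ by $k$ in the exponent. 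The main obstacle I foresee is that $\sum_i p_i$ is only almost-sure equal to $k$ after taking the final expectation, not pointwise, so a more careful predictable-quadratic-variation argument (or a stopping-time truncation discarding the frozen portion of the martingale) is needed to obtain an almost-sure bound on the summed conditional variance. An attractive alternative that sidesteps this technical issue is to invoke the modified log-Sobolev inequality of Hermon--Salez for SCP measures, whose constant scales like $1/k$ in the $k$-homogeneous setting, and to combine it with Herbst's entropy-to-concentration argument to obtain the sharpened bound directly.
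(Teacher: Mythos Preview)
Your treatment of the non-homogeneous case is correct and coincides with the paper's argument (Lemma~5.1 specialized to $\alpha=(1,\ldots,1)$): Doob martingale along the coordinate filtration, SCP coupling to bound $|a_1-a_0|\le 2$, then Azuma--Hoeffding.

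For the $k$-homogeneous refinement, however, your proposal is incomplete, and you have correctly put your finger on the obstruction. The predictable quadratic variation $\sum_i p_i(1-p_i)$ is \emph{not} almost surely bounded by $k$ (already for $n=2$, $k=1$, symmetric Bernoulli one gets $\sum_i p_i\in\{1/2,3/2\}$), so neither Freedman's inequality nor a stopping-time truncation of the coordinate martingale yields the bound $8k$ directly. Your fallback to Hermon--Salez plus Herbst is legitimate and is also noted in the paper (see the discussion after Theorem~4.2), but it only gives $\exp(-t^2/32k)$, since $\|\Gamma_+(f)\|_\infty\le 4\Delta(L)\le 8k$.

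The device you are missing, and which both Pemantle--Peres and the paper (in the proof of Theorem~2.3, Lemma~5.2) use, is to change the \emph{filtration} rather than the inequality: instead of revealing coordinates in a fixed order, one reveals the \emph{positions of the ones} in uniformly random order. This produces a martingale of length exactly $k$; the conditional-expectation increments are again controlled via the SCP coupling (conditioning on $X_v=1$ versus $X_v=0$ and coupling the remaining coordinates to within Hamming distance one), giving $|N_l|\le 2$ for each of the $k$ steps and hence $\exp(-t^2/8k)$ from Azuma--Hoeffding. The paper uses a slight variant (first reveal $X_1,\ldots,X_k$, then the remaining ones in random order, for a total of $2k$ steps) because it needs to handle general weights $\alpha$; with $\alpha\equiv 1$ the original $k$-step exposure martingale suffices.
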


Recently, a sub-exponential version of Theorem \ref{T:PP-main} for matrix-valued functions has been shown by Aoun et al.~\cite{MR4108222}, who develop a general framework for deducing concentration bounds for matrix-valued functions from the Poincar\'{e} inequality.
A Bernstein-type bound for measures with the SRP, which in certain situations may give stronger concentration, has been also developed by Kyng and Song~\cite{MR3899605} for functions of the form $f(x) = \sum_{i=1}^n x_i C_i$, where $C_i$ are nonnegative definite matrices (see Theorem \ref{T:weak-martingale} and Remark \ref{re:K-S} below).

While concentration estimates and functional inequalities for general SCP measures are relatively recent, investigation of uniform measures on slices of the discrete cube in this context has much longer history. Such measures are of interest in relation to the Bernoulli--Laplace models of statistical physics and with uniform sampling without replacement. In particular Lee and Yau studied the Poincar\'e and log-Sobolev inequalities for such measures, whereas Bobkov--Tetali~\cite{MR2283379} and independently Gao--Quastel~\cite{MR2023890} investigated modified log-Sobolev inequalities relevant for concentration estimates. Strong concentration results for this case can be also obtained by projection from Talagrand's convex distance inequality for uniform measure on the symmetric group~\cite{MR1361756}. Samson~\cite{MR3690287} complemented this approach by proving corresponding transportation inequalities. Very recently Sambale and Sinulis~\cite{sambale2020concentration}, investigating general multislices, recovered convex distance inequalities by means of functional inequalities and also obtained concentration for polynomials. One should stress that concentration results for slices of the cube provided by the above references are substantially stronger than those coming from more general inequalities for SCP or SRP measures.

The uniform measure on slices of the cube can be seen as a special case of the distribution of independent Bernoulli random variables conditioned on their sum, when all the variables have the same probability of success. Such general distributions are known to be strong Rayleigh. To our best knowledge there has not been much work concerning refined concentration inequalities for general measures of this type. The only exception we are aware of is a recent article \cite{MR4010964} by Bertail and Cl\'{e}men\c{c}on in which the authors, motivated by applications to survey sampling, obtain precise Bernstein-type inequalities for linear functionals.

\subsection{Our contribution}
We develop two types of results.

Firstly, for general measures satisfying the SCP we extend the martingale argument from Pemantle and Peres~\cite{MR3197973} and generalize Theorem~\ref{T:PP-main} to Lipschitz functions with respect to more general weighted Hamming distances $d_\alpha(x,y) = \sum \alpha_i\ind{x_i\neq y_i}$ obtaining a bounded-difference type inequality.
We use the approach developed for the scalar case together with matrix bounded-difference inequality due to Tropp~\cite{MR2946459} to get an analogous concentration for matrix-valued functions, strengthening the results of Aoun et al.~\cite{MR4108222}, in particular obtaining a subgaussian inequality in place of a subexponential one.
Under a stronger assumption of the SRP we are also able to strengthen the Bernstein-type inequality of Kyng and Song and extend it from linear combinations with coefficients in nonnegative definite matrices to general functions satisfying a matrix bounded-difference type assumptions.

Secondly, building up on the work of Hermon and Salez~\cite{hermon2019modified}, we develop an abstract condition which implies Talagrand's convex distance inequality, matrix-Bernstein inequality and higher order concentration for tetrahedral polynomials.
Finally, we prove that this condition is satisfied for the distribution of Bernoulli random variables conditioned on their sum being equal to some constant, yielding all the aforementioned results in this case.

\subsection{Organization of the  article}
In Section~\ref{S:general_SCP_results} we present our results concerning concentration for general measures satisfying the SCP/SRP.
In Section~\ref{S:CB_results} we specialize our analysis to Bernoulli random variables conditioned on their sum being equal to some constant.
Then, in Section~\ref{S:abstract_formulations} we formulate an abstract framework that allows to deduce the results of Section~\ref{S:CB_results}.
Finally, all the proofs are presented in Sections~\ref{S:general_SCP_proofs},~\ref{S:abstract_proofs} and~\ref{S:CB_proofs}.

\section{Concentration under the SCP and SRP}\label{S:general_SCP_results}
In this section we present our concentration results for general measures satisfying the SCP or SRP.
Let us start with introducing some notation.
For $x=(x_1,\ldots,x_n)\in\calBn:=\{0,1\}^n$ and any $S \subset \{1,2,\ldots, n\}=: [n]$ we use the shorthand notation $x_S = (x_i)_{i\in S}$. For any $r\in [n]$ we denote $x_{>r} = (x_i)_{i>r}$ (and analogously with relations other than $>$)\footnote{We adopt the convention that if $x\in\calBn$ then $x_{>n}=\emptyset$ and as a consequence, e.g., $\Prob{\cdot\,\vert\,X_{>n} = \emptyset}=\Prob{\cdot}$.}.
We also write $x^i$ for the vector obtained from $x \in \calBn$ by flipping its i-th coordinate and $x^{ij}$ for the vector obtained by swapping the $i$-th and $j$-th coordinate, i.e., $x^{i}=x\pm e_i$ and if $x_i\neq x_j$ then $x^{ij}=x\pm e_i \mp e_j$ for $i,j\in [n]$, where $e_i\in\calBn$ is the vector with one on the $i$-th and zeros on the remaining coordinates; whereas $x^{ij} = x$ if $x_i=x_j$. We remark that the notation $x^{ij}$ should not be confused with $(x^i)^j$.
The law of a random variable~$X$ is denoted by $\mathcal{L}(X)$, whereas $\mathcal{L}(X|A)$ stands for the conditional law of $X$ given an event~$A$ (with an analogous convention for conditioning with respect to $\sigma$-fields or other random variables).

Below, we recall the definition of the SCP.
\begin{definition}[Stochastic covering property]\label{d:scp}
For $x,y\in\calBn$, we say that $x$ covers $y$, denoted $x \triangleright y$, if $x=y$ or $x=y+e_i$ for some $i\in [n]$.

A random variable $X$ taking values in $\calBn$ satisfies the SCP if for any $S\subset [n]$ and any $x,y\in\calBn$ satisfying $x_S\triangleright y_S$ there exists a coupling $(U,V)$ between the conditional distributions $\mathcal{L}(X_{S^c} \,\vert\, X_S = y_S)$ and $\mathcal{L}(X_{S^c} \,\vert\, X_S = x_S)$ such that $U \triangleright V$.
A measure $\pi$ satisfies the SCP if $X$ with law $\pi$ does so.
\end{definition}

\begin{remark}
As indicated in the introduction, the SCP is implied by the SRP, cf.~\cite{MR3197973}.
The opposite however is not true, as is demonstrated, e.g., by Cryan et al. in~\cite[Appendix~A]{MR4203344}, where the authors show that yet another possible generalization of the SRP, the strong log-concavity, is incomparable with the SCP.
In particular, they construct a distribution that is supported on the bases of a matroid, and that satisfies the SCP and violates the log-concavity (and whence the SRP as well).
\end{remark}

For a finite sequence $x$, we denote by $x^\downarrow$ the non-increasing rearrangement of the elements of $x$ and for $\alpha\in[0,\infty)^n =: \RR_+^n$ and $x,y\in\calBn$ we define the $\alpha$-weighted Hamming distance $d_\alpha(x,y)=\sum_i \alpha_i\ind{x_i\neq y_i}$.
Finally, for $p\in [1,\infty]$, $\abs{\cdot}_p$ is the $\ell_p$ norm on $\RR^n$ and $\abs{\cdot} := \abs{\cdot}_2$ denotes the Euclidean norm.

\medskip

The first main result of this paper is the following generalization of Theorem~\ref{T:PP-main}.
\begin{theorem}\label{T:d_alpha}
For a probability measure $\pi$ on $\calBn$ satisfying the SCP, any $f\colon \calBn\to\RR$ and $\alpha\in \RR_+^n$ such that
\[
	\abs{f(x)-f(y)}
	\le
	d_\alpha(x,y)
	\quad
	\forall\, x,y\in\calBn
\]
the following estimate holds for all $t>0$:
\begin{align*}
 \pi\big(f > \pi(f) + t \big) \le \exp(-t^2/8\abs{\alpha}^2).
\end{align*}
If $\pi$ is $k$-homogeneous then $8\abs{\alpha}^2$ in the above estimate can be replaced with $16\sum_{i=1}^k (\alpha_i^{\downarrow})^2$.
\end{theorem}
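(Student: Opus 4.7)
My plan is to extend the Doob-martingale argument of Pemantle--Peres~\cite{MR3197973} to the weighted setting, with one essential new ingredient: a preliminary sorting of the weights in decreasing order, so that the extra coordinate flipped by the SCP coupling never contributes more than the current step's weight. Since the SCP is invariant under coordinate permutations, I may relabel so that $\alpha_1 \geq \alpha_2 \geq \cdots \geq \alpha_n$ (with the convention $\alpha_{n+1} := 0$). Take $X \sim \pi$, let $\mathcal{F}_r = \sigma(X_1,\ldots,X_r)$, and introduce the Doob martingale $M_r := \EE[f(X) \mid \mathcal{F}_r]$, so that $f(X) - \pi(f) = \sum_{r=1}^n D_r$ with $D_r := M_r - M_{r-1}$.

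Fix $r$ and condition on $\mathcal{F}_{r-1}$. Writing $A_a := \EE[f(X) \mid \mathcal{F}_{r-1}, X_r = a]$, a direct computation gives $\abs{D_r} \leq \abs{A_1 - A_0}$. Applying the SCP to $S = \{1,\ldots,r\}$ and to the two configurations on $S$ that differ only in their $r$-th bit, one obtains a coupling $(U, V)$ of $\mathcal{L}(X_{>r} \mid \mathcal{F}_{r-1}, X_r = 0)$ and $\mathcal{L}(X_{>r} \mid \mathcal{F}_{r-1}, X_r = 1)$ with $U \triangleright V$; hence the vectors $(X_{\leq r-1}, 0, U)$ and $(X_{\leq r-1}, 1, V)$ differ at position $r$ and at most one extra position $J \in \{r+1,\ldots,n\}$. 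The $d_\alpha$-Lipschitz hypothesis combined with the decreasing arrangement of $\alpha$ then yields
\[
	\abs{A_1 - A_0} \;\leq\; \EE\bigl[\alpha_r + \alpha_J \mathbf{1}_{\{U \neq V\}} \,\big|\, \mathcal{F}_{r-1}\bigr] \;\leq\; \alpha_r + \alpha_{r+1}.
\]
Since $\sum_{r=1}^n (\alpha_r + \alpha_{r+1})^2 \leq 4\abs{\alpha}^2$, the Azuma--Hoeffding inequality delivers the general estimate $\pi(f > \pi(f) + t) \leq \exp(-t^2/8\abs{\alpha}^2)$.

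The $k$-homogeneous refinement is where I expect the genuine work. Under $k$-homogeneity, whenever $D_r \neq 0$ the SCP coupling must transport exactly one ``$1$'' between position $r$ and some $J > r$, so the naive coordinate-by-coordinate bound still gives only $4\abs{\alpha}^2$ rather than the sharper $16\sum_{i \leq k}(\alpha_i^\downarrow)^2$. A natural alternative is to replace the coordinate Doob martingale by one that sequentially reveals the $k$ positions $I_1 < \cdots < I_k$ of the ones in $X$, so only $k$ non-trivial steps occur; the SCP, applied iteratively to couple consecutive conditional laws of the remaining 1-positions, should produce per-step increments of order $\alpha_j^\downarrow$ and hence, via Azuma, the desired $\sum_{i \leq k}(\alpha_i^\downarrow)^2$. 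The technical obstacle is that each such step reveals a position rather than a single bit, so the SCP coupling has to be bootstrapped across the range of admissible positions, and the loss of a factor of $2$ compared to the Pemantle--Peres bound is presumably unavoidable with this strategy, which explains the constant $16$ rather than $8$ appearing in the statement.
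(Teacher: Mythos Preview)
Your argument for the general (non-homogeneous) case is correct and coincides with the paper's proof: sort the weights, reveal coordinates one at a time, bound each martingale increment via the SCP coupling (your $\alpha_r+\alpha_{r+1}$ is a harmless sharpening of the paper's $2\alpha_r$), and apply scalar Azuma--Hoeffding.

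The $k$-homogeneous refinement, however, is only sketched, and the specific filtration you propose---revealing the ordered positions $I_1<\cdots<I_k$ of the ones---is not what the paper does and is unlikely to work as stated: disclosing $I_j$ also implicitly discloses that all coordinates strictly between $I_{j-1}$ and $I_j$ are zero, so a single step can carry information about several high-weight coordinates, and there is no reason the increment should be controlled by $\alpha_j^\downarrow$. The paper sidesteps this via a \emph{two-phase} filtration $(\mathcal{G}_l)_{l=0}^{2k}$. In the first $k$ steps one reveals the bits $X_1,\ldots,X_k$ (after sorting, these are the $k$ heaviest coordinates); exactly your argument gives increments bounded by $2\alpha_l$ for $l\le k$. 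In the remaining $k$ steps one reveals, in \emph{uniformly random} order, the positions of the ones of $X$ lying in $\{k+1,\ldots,n\}$. The random ordering is the Pemantle--Peres device: conditioning on the first $r$ such revealed positions is the same as conditioning on $X$ taking the value $1$ there, so the SCP coupling again applies, and since every remaining position has weight at most $\alpha_k$, each of these later increments is bounded by $2\alpha_k$. Summing,
\[
\sum_{l=1}^{k} 4\alpha_l^2 + k\cdot 4\alpha_k^2 \;\le\; 8\sum_{l=1}^{k}(\alpha_l^\downarrow)^2,
\]
and Azuma--Hoeffding yields the constant $16$. The missing idea in your plan is precisely this hybrid: handle the heavy coordinates deterministically first, then switch to the random-order exposure of the remaining one-positions once every surviving weight is dominated by $\alpha_k$.
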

\begin{remark}
Theorem~\ref{T:d_alpha} implies Theorem~\ref{T:PP-main} (up to an absolute constant in the exponent) by taking $\alpha = (1,1,\ldots,1)$.
Moreover, by considering functions of the form $f(x) = \sum_i c_ix_i$ with $\abs{c}^2 \ll n\abs{c}^2_\infty$	in the non-homogeneous or $\sum_{i=1}^k (c_i^\downarrow)^2 \ll k\abs{c}^2_{\infty}$ in the $k$-homogeneous case, one can see that Theorem~\ref{T:d_alpha} can give substantially better concentration estimates than Theorem~\ref{T:PP-main}.
\end{remark}

We now formulate the matrix analogue of Theorem~\ref{T:d_alpha}.
To this end, let us denote the space of $d$-dimensional Hermitian matrices by $\calHd$, the identity matrix in $\calHd$ by $I_d$, the maximal eigenvalue of $H\in\calHd$ by $\lambda_{max}(H)$ and the operator norm of $H$ by $\norm{H}$.
\begin{theorem}\label{T:d_alpha_matrix}
For a probability measure $\pi$ on $\calBn$ satisfying the SCP, any $f\colon\calBn\to\calHd$ and $\alpha\in\RR^n_+$ such that
\begin{align}\label{eq:matrix-Lipschitz}
	\norm{f(x)-f(y)}
	\le
	d_\alpha(x,y)
	\quad
	\forall\, x,y\in\calBn
\end{align}
the following estimate holds for all $t>0$
\begin{align*}
 \pi\big( \lambda_{max}(f - \pi(f)) > t \big) \le
 d\exp(-t^2/32\abs{\alpha}^2).
\end{align*}
If $\pi$ is $k$-homogeneous then $32\abs{\alpha}^2$ in the above estimate can be replaced with $64\sum_{i=1}^k (\alpha_i^{\downarrow})^2$.
\end{theorem}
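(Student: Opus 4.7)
I plan to follow the martingale argument of Pemantle--Peres that underlies the scalar Theorem~\ref{T:d_alpha}, replacing the scalar Azuma inequality by Tropp's matrix bounded-differences inequality. After relabelling the coordinates so that $\alpha_1\ge \alpha_2\ge\cdots\ge \alpha_n$, set $\mathcal{F}_r = \sigma(X_1,\ldots,X_r)$ and consider the Doob matrix martingale $M_r = \EE[f(X)\mid \mathcal{F}_r]$, whose increments $D_r = M_r - M_{r-1}$ telescope to $f(X) - \pi(f)$.

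The main technical step is the deterministic bound $D_r^2 \preceq 4\alpha_r^2\,I_d$. Writing $p_r = \PP(X_r=1\mid \mathcal{F}_{r-1})$, one has
\[
	D_r = (\ind{\{X_r=1\}} - p_r)\bigl(\EE[f\mid \mathcal{F}_{r-1},X_r=1] - \EE[f\mid \mathcal{F}_{r-1},X_r=0]\bigr).
\]
Applying the SCP with $S = \{1,\ldots,r\}$ produces a coupling $(U,V)$ of the two relevant conditional laws of $X_{>r}$ with $U\triangleright V$, so either $U=V$ or $U=V+e_J$ for some random $J>r$. The matrix Lipschitz condition~\eqref{eq:matrix-Lipschitz} then yields the pathwise sandwich
\[
	-(\alpha_r + \alpha_J\ind{\{U\neq V\}})\,I_d
	\preceq
	f(X_{<r},0,U) - f(X_{<r},1,V)
	\preceq
	(\alpha_r + \alpha_J\ind{\{U\neq V\}})\,I_d,
\]
which, after taking conditional expectations and using the ordering, gives $\norm{\EE[f\mid\mathcal{F}_{r-1},X_r=1] - \EE[f\mid\mathcal{F}_{r-1},X_r=0]}\le \alpha_r+\max_{j>r}\alpha_j\le 2\alpha_r$; combined with $\abs{\ind{\{X_r=1\}}-p_r}\le 1$ this yields the claimed bound on $D_r^2$. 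With $\sigma^2 = \norm{\sum_r 4\alpha_r^2 I_d} = 4\abs{\alpha}^2$, Tropp's matrix Azuma inequality (Theorem~7.1 of~\cite{MR2946459}) then gives the main bound $d\exp(-t^2/32\abs{\alpha}^2)$.

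For the $k$-homogeneous case the conservation of the total weight forces $U=V+e_J$ at \emph{every} step, so each increment genuinely couples coordinate $r$ with a random swap partner $J=J_r>r$; the crude bound $\alpha_J\le \max_{j>r}\alpha_j$ then becomes wasteful. A natural strategy is to replace the coordinate-by-coordinate filtration with one built from the $k$ positions $I_1<\cdots<I_k$ of the ones, producing a $k$-step matrix martingale whose increments have operator norm bounded by $2\alpha_{I_j}$; the pathwise inequality $\sum_{j=1}^k 4\alpha_{I_j}^2 \le 4\sum_{i=1}^k (\alpha_i^\downarrow)^2$, valid for any $k$-subset of $[n]$, together with matrix Azuma, would then deliver the refined constant $64\sum_{i=1}^k (\alpha_i^\downarrow)^2$.

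The main obstacle is precisely this $k$-homogeneous refinement: implementing either the position-based filtration above or a combinatorial bookkeeping on the swap indices $J_r$ requires lifting the scalar $k$-homogeneous argument from Theorem~\ref{T:d_alpha} to the matrix setting, and the Loewner-order manipulations inside conditional expectations are more delicate than their scalar counterparts, so some care is needed to ensure that the pathwise sandwich survives the passage to a reduced filtration.
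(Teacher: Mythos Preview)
Your non-homogeneous argument is correct and coincides with the paper's (their Lemma~\ref{L:Ml_estimate}): the same coordinate filtration, the same SCP coupling yielding $\|D_r\|\le 2\alpha_r$, and the same appeal to Tropp's matrix Azuma with $\sigma^2=4|\alpha|^2$.

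The $k$-homogeneous refinement, however, has a real gap. Your position-based filtration (revealing $I_1<\cdots<I_k$) does not deliver the bound $\|D_j\|\le 2\alpha_{I_j}$: learning $I_j$ simultaneously reveals that every coordinate in $(I_{j-1},I_j)$ is zero, so the step is not a one-coordinate update and the two-point SCP coupling you used in the non-homogeneous case no longer applies (you would have to compare conditional laws given $I_j=m$ versus $I_j=m'$, which differ on an entire interval). Even were that bound true, it is random, whereas Tropp's Azuma requires a deterministic per-step matrix $C_l$ with $D_l^2\preccurlyeq C_l^2$; your pathwise estimate $\sum_j 4\alpha_{I_j}^2\le 4\sum_i(\alpha_i^\downarrow)^2$ controls the wrong quantity. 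A pure Pemantle--Peres random-order revelation of the ones fails for a related reason: the SCP swap partner may be any not-yet-fixed index, possibly index~$1$, so the increment bound degenerates to $\alpha_{Y_l}+\alpha_1$ rather than $2\alpha_{Y_l}$. The paper's device is a hybrid $2k$-step filtration $(\calG_l)_{l\le 2k}$: first reveal $X_1,\ldots,X_k$ coordinate by coordinate (deterministic bounds $2\alpha_1,\ldots,2\alpha_k$, exactly your argument), then reveal the remaining ones in uniformly random order. In that second phase both the revealed position and its SCP swap partner lie in $\{k+1,\ldots,n\}$, so each remaining increment is bounded deterministically by $2\alpha_k$, giving $\sigma^2\le 4\sum_{l=1}^k\alpha_l^2 + 4k\alpha_k^2 \le 8\sum_{i=1}^k(\alpha_i^\downarrow)^2$ and hence the constant $64$.
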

\begin{remark}
Recently, Aoun~et~al.~\cite{MR4108222} showed that for any $k$-homogeneous probability measure $\pi$ on $\calBn$ satisfying the SCP and any $f\colon \calBn \to \calHd$ such that
\[
	\norm{f(x)-f(y)}
	\le
	d_H(x,y)
	\quad
	\forall\, x,y\in\calBn
\]
the following estimate applies
\begin{equation}\label{eq:Aoun_SCP}
	\pi\big(
	\lambda_{max}(f - \pi(f)) > t
	\big)
	\le
	d\exp\Big(
	-\frac{t^2}{8k+2t\sqrt{2k}}
	\Big).
\end{equation}
The exponent in~\eqref{eq:Aoun_SCP} is proportional to $-t/2\sqrt{2k}$ for $t$ big enough and whence for such $t$ Theorem~\ref{T:d_alpha_matrix} applied with $\alpha=(1,\ldots,1)$ strengthens on~\eqref{eq:Aoun_SCP} (and on an analogous result from~\cite{kathuria2020matrix}) as it yields a sub-Gaussian estimate.
\end{remark}
\begin{remark}
Using semigroup techniques together with matrix concentration results implied by the Poincar\'{e} inequality due to Aoun et al.~\cite{MR4108222}, we are also able to derive a sub-exponential concentration inequality for general measures satisfying the SCP under weaker assumptions on $f$ than those of Theorem \ref{T:d_alpha_matrix}, cf.~Remark~\ref{R:matrix_bernstein}.
\end{remark}

When comparing the inequality of Theorem \ref{T:d_alpha_matrix} or the results from~\cite{MR4108222} with results for matrix-valued functions of independent random variables, one can ask if it is possible to weaken the assumptions on the function $f$ and instead of the Lipschitz constant with respect to $d_\alpha$ use some weaker parameter, involving bounds on the increments of the function in terms of the positive semidefinite order. In many situations one encounters functions for which $(f(x) - f(x^i))^2 \preccurlyeq C_i^2$ where $C_i$ are some positive semidefinite matrices and $\preccurlyeq$ stands for the positive semidefinite order (note that considering arbitrary matrices $C_i$ is a generalization of the condition  \eqref{eq:matrix-Lipschitz}, which corresponds to the special case $C_i^2= \alpha_i^2 I_d$). The simplest, yet important situation of this type is given by $f(x) =  \sum_{i=1}^n x_iC_i$. Inequalities for such functions together with algorithmic applications were considered by Kyng and Song in \cite{MR3899605}. It turns out that their approach can be adapted to the setting of general functions, yielding the following theorem.

\begin{theorem}\label{T:weak-martingale}
Let $\pi$ be a $k$-homogeneous probability measure $\calBn$ satisfying the strong Rayleigh property and $f\colon\calBn\to\calHd$ be such that there exists a sequence $C_1,\ldots,C_n\in\calHd$ satisfying
\begin{align}\label{eq:matrix-bounded difference-assumption}
(f(x) - f(x^i))^2 \preccurlyeq C_i^2
\quad
\forall\, x\in\calBn,\, i\in [n].
\end{align}
Then for any $t > 0$,
\begin{equation}\label{eq:matrix-bounded-diff-result}
   \pi\big( \lambda_{max}(f - \pi(f)) > t \big) \le d\exp\Big(-\frac{t^2}{8
    \|\pi (\tilde{f}) \|\log (ek) + \frac{4}{3}Kt}\Big),
\end{equation}
where $\tilde{f}(x)=\sum_{i=1}^n x_i C^2_i$ and $K = \max_{i\le n} \|C_i\|$.
\end{theorem}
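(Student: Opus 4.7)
My plan is to adapt the matrix martingale approach of Kyng--Song~\cite{MR3899605} from the linear case $f(x)=\sum_i x_i C_i$ to the general bounded-difference setting \eqref{eq:matrix-bounded difference-assumption}. The three ingredients are (i) a Doob decomposition of $f(X)-\pi(f)$ obtained by revealing the coordinates of $X$ one at a time; (ii) the SCP coupling guaranteed by the SRP, used to bound the martingale increments and the predictable quadratic variation in terms of the $C_i^2$'s; and (iii) Tropp's matrix Freedman inequality combined with a matrix Chernoff bound for the linear-in-$X$ quantity $\tilde f(X)=\sum_i X_iC_i^2$ under the $k$-homogeneous SRP measure $\pi$ (a form of which is already available from~\cite{MR3899605}).

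Concretely, set $\mathcal{F}_r=\sigma(X_1,\dots,X_r)$, $M_r=\EE[f(X)\,|\,\mathcal{F}_r]$, $D_r=M_r-M_{r-1}$, $p_r=\Prob{X_r=1\,|\,\mathcal{F}_{r-1}}$ and $M_r^{(j)}=\EE[f(X)\,|\,X_{<r},X_r=j]$. A short calculation yields $\EE_{r-1}D_r^2=p_r(1-p_r)(M_r^{(1)}-M_r^{(0)})^2$. Writing $M_r^{(1)}-M_r^{(0)}=\EE[f(X_{<r},1,V)-f(X_{<r},0,U)]$ for the SCP coupling $(U,V)$ between $\mathcal{L}(X_{>r}\,|\,X_{<r},X_r=0)$ and $\mathcal{L}(X_{>r}\,|\,X_{<r},X_r=1)$ (with $U\triangleright V$), this telescopes into a flip of coordinate $r$ and a flip of at most one further random index $J_r>r$. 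Operator convexity of $H\mapsto H^2$ together with \eqref{eq:matrix-bounded difference-assumption} then yields the PSD bound
\[
\EE_{r-1}D_r^2\;\preccurlyeq\;2p_r(1-p_r)\,C_r^2 \;+\; 2p_r(1-p_r)\,\EE_{r-1}\!\bigl[C_{J_r}^2\,\ind{U\neq V}\bigr],
\]
while the deterministic spectral bound $\|D_r\|\le 2K$ is immediate from $\|M_r^{(j)}-M_{r-1}\|\le K$.

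Summing over $r$, the predictable quadratic variation $W=\sum_r\EE_{r-1}D_r^2$ is controlled, in the PSD order, by a constant multiple of a linear expression of the form $\sum_i q_i(X)C_i^2$, exploiting negative correlations from the coupling and $k$-homogeneity to ensure each $C_i^2$ contributes a bounded number of times. Invoking the matrix Chernoff bound of Kyng--Song for $\tilde f(X)$ under the $k$-homogeneous SRP measure $\pi$ then provides the high-probability estimate $\|W\|\le c\,\|\pi(\tilde f)\|\,\log(ek)$, which is the source of the $\log(ek)$ factor in \eqref{eq:matrix-bounded-diff-result}. Feeding this together with $\|D_r\|\le 2K$ into Tropp's matrix Freedman inequality for the martingale $(M_r)$, and handling the exceptional event $\{\|W\|>c\,\|\pi(\tilde f)\|\log(ek)\}$ by the same Chernoff estimate, yields the inequality \eqref{eq:matrix-bounded-diff-result} after adjusting constants.

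The main obstacle is the second step: the SCP coupling introduces a random compensating index $J_r>r$, and one must track $(J_r)_{r}$ carefully enough that the sum $\sum_r\EE_{r-1}[C_{J_r}^2\ind{U\neq V}]$ collapses in the PSD order to a constant multiple of a linear expression to which matrix Chernoff applies, without paying extra factors of $k$ or $K$ beyond those already present in \eqref{eq:matrix-bounded-diff-result}. In the linear case $f(x)=\sum_i x_i C_i$ treated in~\cite{MR3899605} this difficulty does not arise because the martingale differences are already expressed directly in terms of the $C_i$'s; bridging the gap by means of the coupling is the heart of the argument.
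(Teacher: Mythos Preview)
Your plan has a genuine gap precisely at the step you flag as ``the main obstacle'', and the paper resolves it by a route that is structurally different from yours.

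First, the filtration. The paper does \emph{not} reveal coordinates in a fixed order $\mathcal{F}_r=\sigma(X_1,\dots,X_r)$. Instead it uses the Pemantle--Peres scheme: random variables $Y_1,\dots,Y_k$ that reveal, in a uniformly random order, the elements of $\supp X$, with $\mathcal{H}_l=\sigma(Y_1,\dots,Y_l)$. This yields a $k$-step martingale, and conditioning on $\mathcal{H}_l$ is equivalent to conditioning on $\{X_{v_1}=\cdots=X_{v_l}=1\}$. The factor $\log(ek)$ arises \emph{not} from a matrix Chernoff tail bound but from the harmonic sum $\sum_{l=1}^k\frac{1}{k-l+1}$, which appears because $\PP(Y_l=v_l\,|\,\mathcal{H}_{l-1})=\frac{1}{k-l+1}\PP(X_{v_l}=1\,|\,B^v_{l-1})$.

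Second, and more importantly, the paper obtains a \emph{deterministic} (almost sure) PSD bound $\sum_{l=1}^k\EE[D_l^2\,|\,\mathcal{H}_{l-1}]\preccurlyeq 4\log(ek)\,\EE\tilde f(X)$, so Freedman applies directly with no exceptional event to handle. The key device is one you do not mention: after bounding $D_l^2\preccurlyeq 2(C_{Y_l}^2+\EE[C_{\tilde Y_l}^2\,|\,\mathcal{H}_l])\PP(X_{v_l}=0\,|\,B^v_{l-1})$ via the SCP coupling (here $\tilde Y_l$ is your ``compensating index''), the paper applies the \emph{same} increment identity to the linear function $\tilde f(x)=\sum_i x_iC_i^2$. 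Since $\EE[\tilde D_l\,|\,\mathcal{H}_{l-1}]=0$, the $C_{Y_l}^2$ and $C_{\tilde Y_l}^2$ contributions become equal after taking conditional expectation, collapsing the bound to $4\,\EE[C_{Y_l}^2\PP(X_{v_l}=0\,|\,B^v_{l-1})\,|\,\mathcal{H}_{l-1}]$. Only then does one use the SRP (via negative association, $\PP(X_{v_l}=1\,|\,B^v_{l-1})\le\PP(X_{v_l}=1)$) to arrive at a deterministic multiple of $\pi(\tilde f)$.

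Your proposal, by contrast, leaves the control of $\sum_r\EE_{r-1}[C_{J_r}^2\ind{U\neq V}]$ as an unresolved assertion, and the suggested workaround (bound $W$ in probability via matrix Chernoff, then union-bound with Freedman on the good event) would not in general deliver the clean single-exponential bound \eqref{eq:matrix-bounded-diff-result}: the tail of the exceptional event would have to be dominated by the Freedman term uniformly in $t$, which you have not argued. The missing idea is exactly the ``apply the same martingale to $\tilde f$'' cancellation, together with the random-support filtration that makes it effective.
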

\begin{remark}
In fact, the only place in the proof of Theorem~\ref{T:weak-martingale} where we use the SRP in its full strength is to get that $\PP(X_i=1 \,\vert\, X_{i_1}=1,\ldots,X_{i_l}=1) \le \PP(X_i=1)$ for $X\sim\pi$ and any $i,k\in[n]$ and $i_1,\ldots, i_k\subset[n]\setminus\{i\}$.
Therefore, in Theorem~\ref{T:weak-martingale} it suffices to assume that $\pi$ satisfies the SCP and negative association, which is implied by the SRP, cf.~\cite{MR3197973}.
\end{remark}
\begin{remark}\label{re:K-S}
It is natural to expect that $\log(ek)$ in \eqref{eq:matrix-bounded-diff-result} is just an artefact of the proof. Removing it even just for functions of the form $f(x) = \sum_{i=1}^n x_i C_i$ for positive semidefinite matrices $C_i$ would lead to improvement of certain algorithmic constructions related to graph sparsifiers obtained via random spanning trees, cf. \cite{MR3899605}.

Let us also point out that even though Theorem \ref{T:weak-martingale} applies to general functions, when specialized to the function $f$ as discussed above, it improves slightly on the results from~\cite{MR3899605}, which instead of $\|\pi(\tilde{f})\|$ use a larger quantity $K\|\pi (\hat{f})\|$ with $
\hat{f}(x) = \sum_{i=1}^n x_i C_i$ (recall that $C_i$'s are nonnegative definite). One should stress however that we rely on the approach worked out in \cite{MR3899605}.
\end{remark}

\section{Concentration for conditional Bernoullis}\label{S:CB_results}
In this section, we present our concentration results concerning Bernoulli random variables conditioned on their sum being constant.
These include Talagrand's convex distance inequality, matrix-Bernstein inequality and concentration for polynomials.

We start with introducing the notation.
For a sequence $p=(p_1,\ldots,p_n)\in (0,1)^n$, let $B=(B_1,\ldots,B_n)$ be a sequence of independent Bernoulli random variables with probabilities of success $p_i$, i.e., $\PP(B_i=1)=1-\PP(B_i=0)=p_i$ for $i\in [n]$.
Finally, set $X=(X_1,\ldots,X_n) \sim \mathcal{L}\big( B\,\vert \sum_i B_i = k \big)$ for some $k\in\{0,\ldots,n\}$ and denote the distribution of $X$ by $\pi(p,k)$.

Our first contribution is a counterpart of the celebrated convex distance inequality, introduced for the first time by Talagrand~\cite{MR964871} for product measures on the cube.
\begin{theorem}\label{T:SCP-Talagrand}
If $\pi\sim \pi(p,k)$ for some $p\in (0,1)^n$ and $k\in\{0,\ldots,n\}$, then for any $A\subset \calBn$,
\[	
 \pi(A)\pi\big(d_T^2(\cdot, A)/84 \big) \le 1,
\]
where
\[
	d_T(x,A) = \sup_{\alpha\colon \abs{\alpha}\le 1} d_\alpha(x,A)
	\quad
	\text{for}
	\quad
	x\in\calBn,\,
	A\subset\calBn.	
\]
\end{theorem}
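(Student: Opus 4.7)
The plan is to adapt the entropy method for Talagrand's convex distance inequality à la Boucheron--Lugosi--Massart to the swap dynamics of $\pi(p,k)$. The starting point is a dual representation of $d_T$ via Sion's minimax applied to the bilinear functional $(\alpha,\nu)\mapsto \sum_i \alpha_i\nu(\{y:y_i\ne x_i\})$ on the product of the nonnegative Euclidean unit ball and $\mathcal{P}(A)$:
\[
  d_T(x,A)=\min_{\nu\in\mathcal{P}(A)}\abs{v(\nu,x)},\qquad v(\nu,x)_i:=\nu(\{y:y_i\ne x_i\}).
\]
Fix a minimizer $\nu_x$ and set $\alpha(x):=v(\nu_x,x)/d_T(x,A)$ (with $\alpha(x):=0$ if $d_T(x,A)=0$), so that $\abs{\alpha(x)}\le 1$ and $d_T(x,A)=\sum_i\alpha(x)_i\nu_x(\{y_i\ne x_i\})$.

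The main analytic tool is a swap-type modified log-Sobolev inequality for $\pi=\pi(p,k)$, of the form
\[
  \Ent_\pi(g)\le C\sum_{i\ne j}\pi\bigl[q_{ij}(X)\bigl(g(X)-g(X^{ij})\bigr)\bigl(\log g(X)-\log g(X^{ij})\bigr)\bigr]
\]
for suitable weights $q_{ij}$; this is the natural functional inequality for exchange dynamics on the fixed-sum slice, and is expected to be supplied by the abstract framework developed in Section~\ref{S:abstract_formulations}. Applied to $g=e^{\lambda F}$ with $F:=d_T^2(\cdot,A)$, standard entropy-increment estimates reduce matters to bounding the positive parts of the swap-increments of $F$ by $F$ itself (plus an absolute constant).

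The key step is therefore the swap-increment estimate. Using $\nu_x$ itself as a test measure for $d_T(x^{ij},A)$, the vector $v(\nu_x,x^{ij})$ coincides with $v(\nu_x,x)$ outside the pair $\{i,j\}$, and each of the two changed coordinates differs by at most $1$ in absolute value; a direct calculation then yields
\[
  d_T^2(x^{ij},A)-d_T^2(x,A)\le 2\,d_T(x,A)\,\abs{\alpha(x)_i-\alpha(x)_j}+2.
\]
Squaring the positive part, summing over $i\ne j$ against $q_{ij}(x)$, and using $\abs{\alpha(x)}\le 1$ together with Cauchy--Schwarz, produces a Boucheron--Lugosi--Massart-type self-bounded inequality
\[
  \sum_{i\ne j}q_{ij}(x)\bigl(F(x^{ij})-F(x)\bigr)_+^2\lesssim F(x)+1.
\]
A Herbst-type integration of the resulting differential inequality for $\lambda\mapsto\log\pi(e^{\lambda F})$, combined with the fact that $F\equiv 0$ on $A$ (so that $\pi(A)\pi(e^{\lambda F})\le 1$ is exactly an exponential-moment bound for $F$ under $\pi(\cdot\mid A)$), yields the claim for $\lambda=1/84$.

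The main obstacle is the swap-increment analysis: although a coordinate swap is nonlocal, one must exploit cleanly that the \emph{same} minimizer $\nu_x$ remains a competitor for $d_T(x^{ij},A)$, so that only the two affected coordinates of $v(\nu_x,\cdot)$ contribute, and that the factor $d_T(x,A)$ appearing in the linear term can be absorbed into $F(x)=d_T^2(x,A)$. Matching the explicit constant $84$ further requires careful bookkeeping of the constant $C$ from the mLSI together with the numerical factors from the swap-increment bound and Herbst's lemma; this is where most of the detailed work lies, without any truly new conceptual ingredient beyond what is sketched above.
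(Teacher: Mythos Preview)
Your outline follows the same high-level route as the paper (Sion's minimax for $d_T$, a self-bounding estimate for $F=d_T^2$ with respect to the swap Dirichlet form, then the entropy method), and the paper indeed reduces Theorem~\ref{T:SCP-Talagrand} to an abstract statement (Proposition~\ref{P:abstract_Talagrand}) applied with the stability constant $R=2$ for $\pi(p,k)$. However, two steps in your sketch do not go through as written.

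First, the increment you bound goes in the wrong direction. The self-bounding estimate that drives the exponential moment of $F$ is $\Gamma_+(F)(x)=\sum_y L(x,y)(F(x)-F(y))_+^2\le 8R\rho(L)\,F(x)$, obtained by first controlling $(d_T(x,A)-d_T(y,A))_+\le \sum_i\alpha^\ast_i\indbr{x_i\neq y_i}$ with the saddle $\alpha^\ast=\alpha^\ast(x)$ and then invoking the stability condition $\sum_{y:y_i\neq x_i}L(x,y)\le R\rho(L)$; there is no additive ``$+1$''. Your displayed bound is for $(F(x^{ij})-F(x))_+$, i.e., the other sign; for that direction one has simply $F(y)-F(x)\le d_H(x,y)\le 2$ (no $d_T$ factor), and this uniform increment bound is used separately. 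Second, the way you propose to conclude is incorrect: the statement that ``$F\equiv 0$ on $A$ makes $\pi(A)\pi(e^{\lambda F})\le 1$ an exponential-moment bound under $\pi(\cdot\mid A)$'' is false, and a single Herbst integration of $\lambda\mapsto\log\pi(e^{\lambda F})$ does not produce the product $\pi(A)\pi(e^{\lambda F})$. In the paper the argument has two halves: the clean self-bounding $\Gamma_+(F)\le 8R\rho(L)F$ feeds a Bobkov--G\"otze lemma to give $\pi\big(e^{F/t}\big)\le\exp\big(\pi(F)/(t-8R)\big)$; independently, an entropy computation for $e^{-\lambda F}$ (using both the self-bounding and the uniform bound $F(x)-F(y)\le 2$) yields the lower-tail estimate $\pi(A)\le\exp\big(-\pi(F)/(32R+4)\big)$. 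Multiplying these with $t=40R+4$ gives the inequality; $R=2$ produces the constant $84$. Finally, note that the existence of a flip-swap generator for $\pi(p,k)$ that is $2$-stable is itself the substantive input specific to conditional Bernoullis (Proposition~\ref{P:L-stability}); it is not an immediate consequence of the abstract framework and should not be assumed away.
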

Let $\mathbb{M}_\pi f$ denote any median of $f$ with respect to the measure $\pi$.
A classical consequence of Theorem~\ref{T:SCP-Talagrand} is the following fact regarding the concentration around the median of {convex} functions~\cite{MR3185193}. Let us recall the classical observation that subgaussian concentration around median and mean for convex Lipschitz functions are equivalent up to the change of constants by a universal factor.

\begin{corollary}\label{C:Talagrand_concentration}
If $\pi\sim \pi(p,k)$ for some $p\in (0,1)^n$ and $k\in\{0,\ldots,n\}$, then for any convex function $f\colon\RR^n\to\RR$ that is $L$-Lipschitz with respect to the standard Euclidean distance on $\RR^n$ and any $t>0$,
\[
 \pi\big( \abs{f-\mathbb{M}_\pi f}  > t\big) \le
 4\exp\big( -t^2/84L^2 \big).
\]
\end{corollary}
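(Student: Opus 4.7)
The plan is to follow the classical deduction of subgaussian concentration for convex Lipschitz functions from a convex distance inequality, in the spirit of Talagrand's original argument (reproduced, e.g., in \cite{MR3185193}).

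The first ingredient is to identify $d_T(x,A)$ with the Euclidean distance from $x$ to $\conv(A)$, viewed as a subset of $\RR^n$. I would write $d_T(x,A)=\sup_{\abs{\alpha}\le 1}\inf_{y\in A}\sum_i\alpha_i\ind{x_i\ne y_i}$, swap the sup and the inf (the latter taken over probability measures $\nu$ on $A$) via Sion's minimax theorem, and use the elementary identity $\ind{x_i\ne y_i}=\abs{x_i-y_i}$ valid for $x_i,y_i\in\{0,1\}$, to arrive at
\[
d_T(x,A)=\inf_{\nu}\bigl|x-\textstyle\int y\,d\nu(y)\bigr|=d_2\bigl(x,\conv(A)\bigr).
\]
The second ingredient is the pointwise estimate
\[
f(x)\le\sup_{y\in A}f(y)+L\,d_T(x,A)
\qquad\forall\, x\in\calBn,\ A\subset\calBn,
\]
obtained by applying convexity and the Lipschitz hypothesis to an optimal barycenter $\bar y\in\conv(A)$ from the previous step, combined with Jensen's inequality $f(\bar y)\le\int f\,d\nu\le\sup_{y\in A}f(y)$.

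Armed with these tools, the upper tail is immediate: taking $A:=\{f\le\mathbb{M}_\pi f\}\cap\calBn$, which has $\pi(A)\ge 1/2$, the pointwise bound yields the level-set inclusion $\{f\ge\mathbb{M}_\pi f+t\}\subset\{d_T(\cdot,A)\ge t/L\}$, while Theorem~\ref{T:SCP-Talagrand} combined with Markov's inequality gives $\pi(d_T(\cdot,A)\ge s)\le 2\exp(-s^2/84)$. Specialising $s=t/L$ produces $\pi(f\ge\mathbb{M}_\pi f+t)\le 2\exp(-t^2/(84L^2))$. For the lower tail I would instead take $A:=\{f\le\mathbb{M}_\pi f-t\}$; any $x$ with $f(x)\ge\mathbb{M}_\pi f$ then satisfies $\mathbb{M}_\pi f\le f(x)\le\mathbb{M}_\pi f-t+L\,d_T(x,A)$, hence $d_T(x,A)\ge t/L$. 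Since $\pi(\{f\ge\mathbb{M}_\pi f\})\ge 1/2$, a further application of Theorem~\ref{T:SCP-Talagrand} together with a short rearrangement yields $\pi(A)\le 2\exp(-t^2/(84L^2))$. Summing the two tails produces the factor $4$ in the corollary.

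The argument is essentially mechanical and presents no genuine obstacle; the only slightly delicate point is the minimax identification of $d_T(x,A)$ with $d_2(x,\conv(A))$, but the linearity in $y$ of all functionals involved on $\{0,1\}^n$ makes the hypotheses of Sion's theorem easy to verify.
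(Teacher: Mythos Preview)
Your argument is correct and is precisely the classical route the paper has in mind: it does not spell out a proof but simply cites \cite{MR3185193} for the standard deduction of subgaussian concentration for convex Lipschitz functions from the convex distance inequality of Theorem~\ref{T:SCP-Talagrand}. The minimax identification $d_T(x,A)=d_2(x,\conv(A))$ you use is also invoked elsewhere in the paper (cf.\ the proof of Lemma~\ref{L:Gamma_+bound}), so there is nothing to add.
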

\begin{remark}
If one is interested just in the lower tail of a convex function, then one can in fact replace the Lipschitz constant $L$ by $\pi \big(|\nabla f|\big)$ or even certain quantiles of $|\nabla f|$. We do not pursue this direction here and refer the reader to \cite{MR3892322}.
\end{remark}

\begin{remark}
If $f\colon \calBn \to \RR$ is $d_{\alpha}$ 1-Lipschitz, then it can be extended to a function on $\RR^n$ which is $\abs{\alpha}$-Lipschitz with respect to the standard Euclidean distance.
Therefore, Corollary~\ref{C:Talagrand_concentration} counterparts Theorem~\ref{T:d_alpha} in the sense that it yields the same concentration profile while allowing for a weaker Lipschitz condition on $f$ at the cost of assuming convexity.
\end{remark}

Our next result concerns concentration for matrix-valued functions under weaker assumptions than those in Theorem~\ref{T:d_alpha_matrix}.
\begin{theorem}\label{T:Bernoullis_matrix}
Let $\pi\sim \pi(p,k)$ for some $p\in (0,1)^n$ and $k\in\{0,\ldots,n\}$.
Assume that $f\colon\calBn\to\calHd$ is such that there is a sequence of positive semidefinite matrices $C_1,\ldots,C_n$ satisfying
\begin{equation}\label{eq:matrix_condition}
(f(x) - f(x^i))^2
\preccurlyeq
C_i^2,
\quad\forall\;x\in\calBn,\,i\in[n],
\end{equation}
where $\preccurlyeq$ denotes the partial ordering of the set of positive semidefinite matrices.
Define the variance proxy
\begin{equation*}
 \sigma^2 =
 16\sup\Big\{\,
 \big\|
 	\sum_{i\in \mathcal{I}} C_i^2
 \big\|
 \,\colon\,
 \abs{\mathcal{I}} = k,\,
 \mathcal{I}\subset [n]
 \,\Big\}.
\end{equation*}
Then for any $t>0$,
\[
 \pi\big(
 \lambda_{max}(f - \mu(f)) > t
 \big)
 \le
 d\exp\big(
 -t^2/(\sigma^2+{\sigma}t)
 \big).
\]
\end{theorem}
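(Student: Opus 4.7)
The plan is to deduce Theorem~\ref{T:Bernoullis_matrix} from the abstract framework developed in Section~\ref{S:abstract_formulations}. Since $\pi(p,k)$ is strong Rayleigh and $k$-homogeneous, it is supported on the slice $\{x \in \calBn : \sum_i x_i = k\}$ and, by Hermon--Salez, admits a modified log-Sobolev inequality for the natural swap dynamics. Promoting this scalar MLSI to the matrix MLSI formulated in the abstract part of the paper, and combining it with a Herbst-type argument on the trace moment generating function in the spirit of Tropp's proof of the matrix Bernstein inequality, will produce the announced Bernstein-type tail. Unlike the proof of Theorem~\ref{T:weak-martingale}, which goes via the Kyng--Song martingale method and therefore pays a $\log(ek)$ factor and uses $\|\pi(\tilde f)\|$, the MLSI route avoids logarithmic losses and directly produces the sharper variance proxy $\sigma^2$ built from $k$-subsets.

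The key computation is the control of the matrix Dirichlet form under the hypothesis \eqref{eq:matrix_condition}. Writing $x^{ij} = (x^i)^j$ and telescoping
\[
f(x) - f(x^{ij}) = (f(x) - f(x^i)) + (f(x^i) - f(x^{ij})),
\]
one obtains, via the operator Cauchy--Schwarz inequality, the positive semidefinite estimate $(f(x) - f(x^{ij}))^2 \preccurlyeq 2(C_i^2 + C_j^2)$ for admissible swaps $(i,j)$ with $x_i = 1$ and $x_j = 0$. Summing over such pairs at a fixed $x$ and exploiting $k$-homogeneity, the aggregate is bounded in operator norm by $n \cdot \sup_{|\mathcal{I}| = k}\|\sum_{i \in \mathcal{I}} C_i^2\|$; after the $1/n$-type normalization intrinsic to the swap dynamics, this produces precisely the constant $\sigma^2/16$ appearing in the statement. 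Feeding this bound into the matrix MLSI, together with the Hermitian inequality $(a-b)(e^a - e^b) \le \tfrac{1}{2}(a-b)^2(e^a + e^b)$ lifted to matrices via Klein-type trace inequalities, yields a differential inequality of Bernstein shape for $M(\theta) = \operatorname{tr}\pi(\exp(\theta(f - \pi(f))))$, which after integration, optimization in $\theta \ge 0$ and Markov's inequality delivers the claimed estimate.

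The main obstacle will be the correct extraction of the linear-in-$t$ summand in the denominator of the exponent. A naive application of the matrix MLSI gives only a sub-Gaussian exponent of the form $\exp(-t^2/\sigma^2)$; recovering the $\sigma t$ contribution requires exploiting the operator-norm boundedness of the increments $f(x) - f(x^{ij})$ through a Taylor-type splitting of the matrix exponential, in the spirit of Tropp's and Kyng--Song's proofs of matrix Bernstein inequalities. Executing this splitting within the discrete swap framework so that the resulting sums telescope cleanly to $\sigma^2$ and $\sigma$ — rather than to looser proxies such as $\|\sum_i C_i^2\|$ or $\sum_i \|C_i\|$ — is the technical heart of the argument.
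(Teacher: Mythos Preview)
Your proposal diverges from the paper's actual route and contains two genuine gaps.

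First, the paper does \emph{not} promote the scalar MLSI to a ``matrix MLSI'' and then run a Herbst/Tropp argument on the trace mgf. Instead it downgrades the MLSI to the scalar Poincar\'e inequality (with $C_P=2/\rho(L)$), observes that scalar Poincar\'e implies \emph{matrix} Poincar\'e with the same constant (quoting \cite{MR4216521,MR4245747}), and then applies the black-box of Aoun--Banna--Youssef, equation~\eqref{eq:matrix_poincare_concentration}, which already outputs the Bernstein-type exponent $-t^2/(2C_Pv_f + t\sqrt{2C_Pv_f})$. There is no Taylor splitting, no Klein inequality, and no need to ``recover the $\sigma t$ contribution'': the sub-exponential shape comes for free from Poincar\'e, not from MLSI. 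Your concern in the last paragraph that MLSI would yield only sub-Gaussian behavior, and that one must work harder to produce the linear term, is therefore misplaced; the paper simply never uses MLSI in the matrix step. Promoting scalar MLSI directly to a matrix MLSI is not standard and is not what Section~\ref{S:abstract_formulations} does.

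Second, and more seriously, your control of $\Gamma(f)$ is incomplete for non-uniform $\pi(p,k)$. You write that ``summing over such pairs at a fixed $x$ and exploiting $k$-homogeneity, the aggregate is bounded in operator norm by $n\cdot\sup_{|\mathcal I|=k}\|\sum_{i\in\mathcal I}C_i^2\|$; after the $1/n$-type normalization intrinsic to the swap dynamics\ldots''. But the Hermon--Salez generator for $\pi(p,k)$ does \emph{not} have uniform $1/n$ rates; the rates $L(x,x^{ij})$ are built from the recursive coupling construction \eqref{eq:kernel-construction}--\eqref{def:Ll} and are highly non-uniform. The paper's actual mechanism is the \emph{stability condition}~\eqref{eq:stability-condition}: one shows (Proposition~\ref{P:L-stability}, the technical heart of Section~\ref{S:CB_proofs}) that this specific generator is $2$-stable, meaning $\sum_{y:y_i\neq x_i}L(x,y)\le 2\rho(L)$ for every $i$. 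This, together with $\Delta(L)\le 2k$, is fed into Lemma~\ref{L:analytical_mtx} (a Krein--Milman argument) to convert the weighted sum $\sum_i C_i^2 \sum_{y:y_i\neq x_i}L(x,y)$ into $2R\rho(L)\cdot\sup_{|\mathcal I|\le k}\|\sum_{i\in\mathcal I}C_i^2\|$. Without stability, $k$-homogeneity alone does not give this bound: for instance, a single coordinate could have total escape rate of order $\Delta(L)$, and then the sum would be governed by that single $C_i^2$ with a large coefficient, not by a $k$-subset sum. Your sketch omits stability entirely, so the claimed reduction to $\sigma^2/16$ is not justified.
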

\begin{remark}
Condition~\eqref{eq:matrix_condition} implies that $f$ is 1-Lipschitz with respect to the distance $d_\alpha$ with $\alpha_i={\norm{C_i}}$.
On the other hand, for many choices of matrices $C_1,\ldots,C_n$ it happens that $\sigma^2 \ll  \sum_{i=1}^k\big(\norm{C_i}^2\big)^{\downarrow}$ as $n,k \to \infty$.
Therefore, while yielding only sub-exponential concentration as opposed to the sub-Gaussian one given by Theorem~\ref{T:d_alpha_matrix}, Theorem~\ref{T:Bernoullis_matrix} may improve significantly on Theorem~\ref{T:d_alpha_matrix} through better parameters in the exponent.
\end{remark}
\begin{remark}\label{R:matrix_bernstein}
By an adaptation of the proof of Theorem~\ref{T:Bernoullis_matrix}, one can obtain a similar result for general $k$-homogeneous measures satisfying the SCP condition with the variance proxy parameter
\[
	\sigma^2=
	8\sup\Big\{\,
		\big\|
		{\sum_{i\in\mathcal{I}}C_i^2}
		\big\|
		+
		k
		\max_{i\notin\mathcal{I}}
		\big\|{C_i^2}\big\|
		\colon
		\abs{\mathcal{I}}\le k,\, \mathcal{I}\subset [n]
	\,\Big\}.
\]
\end{remark}

Finally, let us turn to the higher order concentration.
By the Fourier--Walsh expansion (see e.g., \cite{MR3443800}), every function $f\colon \calBn \to \RR$ can be written in a unique way as a tetrahedral polynomial, i.e., a polynomial which is affine with respect to every variable (in particular the degree of the polynomial is at most $n$).
Therefore in what follows we restrict our attention to this representation.
In particular, when we speak about the gradient $\nabla f = (\partial_1 f,\ldots,\partial_n f)$ or higher order derivatives $\nabla^k f$, we always think of the usual derivatives of the polynomial function on $\RR^n$ given by the tetrahedral representation of $f$ (sometimes referred to as the harmonic extension of $f$). We remark that the directional derivatives $\partial_i f$ coincide on $\calBn$ with the discrete derivatives of $f$ given by $D_i f(x) = f(\max(x,x^i)) - f(\min(x,x^i))$, where the maximum and minimum are taken coordinatewise.

In order to formulate concentration of measure estimates for tetrahedral polynomials, we need to introduce a family of injective tensor product norms on $d$-index matrices ($d$-tensors).
Let us recall the notation introduced by Latała in~\cite{MR2294983}.

Let $|I|$ be the cardinality of a set $I$ and for $\ii = (i_1,\ldots,i_d) \in [n]^d$ let $|\ii| = \max_{j\le d} {i_j}$ and $|\ii_I| = \max_{j \in I} i_j$.
Denote by $P_d$ the set of partitions of $[d]$ into nonempty, pairwise disjoint sets. For a partition $\mathcal{I} =\{I_1,\ldots,I_k\} \in P_d$, and a $d$-indexed matrix $A = (a_\ii)_{\ii \in [n]^d}$, define
\begin{align*}
\|A\|_{\mathcal{I}}=\sup\Big\{\sum_{\ii\in [n]^d} a_{\ii}\prod_{l=1}^k x\ub{l}_{\ii_{I_l}}\colon
\abs{(x\ub{l}_{\ii_{I_l}})}\leq 1, 1\leq l\leq k \Big\},
\end{align*}
where $\abs{(x_{\ii_{I_l}})} = \sqrt{\sum_{|\ii_{I_l}|\le n} x_{\ii_{I_l}}^2}$.
Therefore, for example,
\begin{align*}
\|(a_{ij})_{i,j\le n}\|_{\{1,2\}}&= \sup\Big\{ \sum_{i,j\le n} a_{ij}x_{ij}\colon \sum_{i,j\le n} x_{ij}^2 \le 1\Big\} = \sqrt{\sum_{i,j\le n}a_{ij}^2} = \|(a_{ij})_{i,j\le n}\|_{HS},\\
\|(a_{ij})_{i,j\le n}\|_{\{1\}\{2\}}&= \sup\Big\{ \sum_{i,j\le n} a_{ij}x_iy_j\colon \sum_{i\le n} x_{i}^2\le 1,\sum_{j\le n}y_j^2 \le 1\Big\} = \|(a_{ij})_{i,j\le n}\|,\\
\|(a_{ijk})_{i,j,k\le n}\|_{\{1,2\} \{3\}} &= \sup\Big\{ \sum_{i,j,k\le n} a_{ijk}x_{ij}y_k\colon \sum_{i,j\le n} x_{ij}^2\le 1,\sum_{k\le n}y_k^2 \le 1\Big\},
\end{align*}
where $\|\cdot\|_{HS}$ and $\|\cdot\|$ denote the Hilbert--Schmidt and the operator norm respectively.
\begin{theorem}\label{T:polynomials}
If $\pi\sim \pi(p,k)$ for some $p\in (0,1)^n$ and $k\in\{0,\ldots,n\}$, then for any tetrahedral polynomial $f\colon\calBn\to\RR$ of degree $d$,
\[
\pi\big(\big|f - \pi(f)\big| \ge t\big)
\le
2\exp\Big(
-\frac{1}{C_d}\min_{1\le r \le d} \min_{\mathcal{J}\in P_r}
\Big(
\frac{t}{\|\pi( \nabla^r f)\|_{\mathcal{J}}}
\Big)^{2/|\mathcal{J}|}
\Big),
\]
where $C_d$ is a constant depending only on the degree $d$ of $f$.
\end{theorem}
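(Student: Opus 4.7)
The plan is to reduce the theorem to a uniform moment inequality of the form
\[
\|f - \pi(f)\|_{L^q(\pi)} \le C_d \sum_{r=1}^d \sum_{\mathcal{J}\in P_r} q^{|\mathcal{J}|/2} \|\pi(\nabla^r f)\|_{\mathcal{J}}, \qquad q\ge 2,
\]
valid for every tetrahedral polynomial $f$ of degree at most $d$. Given such a bound, Chebyshev's inequality applied to $|f-\pi(f)|^q$ followed by optimization of $q$ over $[2,\infty)$ delivers the advertised tail estimate, with the constant $C_d$ absorbed into the constant of the statement.

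To obtain the moment inequality I would first invoke the abstract framework of Section~\ref{S:abstract_formulations}: as announced in the introduction, a suitable abstract modified-log-Sobolev-type condition on $\pi$ is enough to yield higher-order polynomial concentration of exactly the partitioned form displayed above. It therefore suffices to verify that $\pi(p,k)$ satisfies that abstract hypothesis. For this I would build on the Hermon--Salez approach, but rather than work with coordinate-flip dynamics (which do not preserve $\pi(p,k)$), I would use the natural \emph{exchange} dynamics swapping pairs $(x_i,x_j)$ at rate proportional to $x_i(1-x_j)+(1-x_i)x_j$; this kernel is reversible for $\pi(p,k)$ and, together with the SCP, supplies a modified log-Sobolev inequality with dimension-free constants of the type required by the abstract framework.

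Next comes the iteration producing the partitioned norms. The modified log-Sobolev inequality integrates to a Beckner-type estimate of the shape $\|g-\pi(g)\|_q \le K\sqrt{q}\,\|\mathcal{E}(g)\|_{L^{q/2}(\pi)}^{1/2}$, where $\mathcal{E}$ is the carr\'e du champ of the swap dynamics. For a tetrahedral $g$ of degree $d$, each swap increment $g(x)-g(x^{ij})$ is again a tetrahedral polynomial in the remaining coordinates and decomposes as $(x_i-x_j)(D_ig(x)-D_jg(x))$. Applying the Beckner estimate recursively to these increments generates nested tensors indexed by $r$-tuples; at each step one regroups indices either into \emph{inner} blocks, contributing Hilbert--Schmidt-type $\ell^2$ constraints, or into \emph{separated} blocks, contributing operator-type constraints. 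Tracking these groupings across $d$ iterations produces exactly the sum over partitions $\mathcal{J}\in P_r$ together with the injective-tensor norms $\|\pi(\nabla^r f)\|_{\mathcal{J}}$, mirroring Lata{\l}a's estimates for Gaussian chaoses and the Adamczak--Wolff bounds for independent Bernoullis; the equivalence on $\calBn$ between the discrete derivative $D^r$ and the harmonic-extension derivative $\nabla^r$ permits rewriting the final bound in the stated form.

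The principal obstacle is the iteration step: in the product case the argument rests on coordinate-wise independence and on $D_i$ commuting with conditional expectations, both of which fail under $\pi(p,k)$. My plan to circumvent this is twofold. First, use the swap gradient rather than the coordinate-flip gradient throughout, so that the Dirichlet form is compatible with the conservation $\sum_i x_i = k$; second, exploit the strong Rayleigh property, which implies negative association and thereby legitimizes the cross-term estimates that appear when regrouping indices during induction. Encapsulating both of these facts inside the abstract condition of Section~\ref{S:abstract_formulations} reduces the remaining work to checking that hypothesis for $\pi(p,k)$, which is then the task carried out in Section~\ref{S:CB_proofs}.
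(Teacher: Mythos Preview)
Your high-level plan (moment bound $\Rightarrow$ Chebyshev and optimization in $q$) matches the paper, and so does the idea of reducing to an abstract hypothesis verified for $\pi(p,k)$. But two concrete points in your outline would not go through.

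First, the dynamics you propose does not do the job. Swapping pairs at rate proportional to $x_i(1-x_j)+(1-x_i)x_j$ is \emph{not} reversible for $\pi(p,k)$ when $p$ is non-constant; to obtain reversibility you would need rates involving the odds $p_i/(1-p_i)$, and once you do that the resulting generator typically fails the key hypothesis. Indeed, the abstract condition the paper actually needs is not just a modified log-Sobolev inequality but the \emph{stability condition}~\eqref{eq:stability-condition}: a per-coordinate bound $\max_{x,i}\sum_{y:y_i\neq x_i}L(x,y)\le R\rho(L)$ with $R$ independent of $n,k,p$. The paper remarks explicitly (end of Section~\ref{ss:abstract-formulations}) that for the natural base-exchange walk this fails---taking $p_1\to 1^-$ forces $R\ge k$. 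So ``mLSI with dimension-free constants'' is not the right target, and the obvious swap dynamics is not the right generator. What the paper does instead is take the Hermon--Salez \emph{recursive} construction and feed into it a specific coupling (Lemma~\ref{L:CB_coupling}) designed so that the resulting generator is $2$-stable; this is the real content of Section~\ref{S:CB_proofs}.

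Second, the iteration producing the $\|\cdot\|_{\mathcal{J}}$ norms is not handled by negative association or cross-term estimates. The paper's mechanism is: the stability condition gives the pointwise bound $\Gamma(f)(x)\le CR\rho(L)\big(|\nabla f(x)|^2+\|\nabla^2 f(x)\|_{HS}^2\big)$, which via Proposition~\ref{P:moment-estimate} yields a \emph{two-term} moment recursion. One then linearizes both terms by pairing with auxiliary independent Gaussian tensors $G^I$, iterates the recursion in $d$, and finally reads off the partitioned norms from Lata{\l}a's Gaussian-chaos moment bounds. Your sketch of recursing directly on swap increments and ``regrouping indices into inner/separated blocks'' does not obviously converge to this, and in particular does not explain where the second-order term $\|\nabla^2 f\|_{HS}$ comes from at each step (it appears because a single swap changes two coordinates, so the first-order Taylor expansion is not enough).
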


Inequalities of this type for polynomials of arbitrary degree were introduced for the first time by Latała \cite{MR2294983} for tetrahedral polynomials in i.i.d. standard Gaussian variables. Subsequently they were extended to general polynomials in independent subgaussian random variables and to certain dependent situations related to Glauber dynamics (see \cite{MR3383337,MR3949267,MR4091094,sambale2020concentration,adamczak2020modified}). We remark that in the independent, subgaussian case and $d=2$ they reduce to the well known Hanson--Wright inequality for quadratic forms, which has proved useful in non-asymptotic analysis of random matrices and in asymptotic geometric analysis (see, e.g., \cite[Chapter 6]{MR3837109}). It is worth mentioning that in the Gaussian case they may be reversed up to the value of the absolute constants, thus Theorem 3.8 shows that the measures $\pi(p,k)$ exhibit Gaussian type concentration for polynomials.
While calculating the norms $\|\cdot\|_\mathcal{J}$ is usually difficult, estimating them is sometimes possible, leading to applications involving subgraph counts (in the Erd\H{o}s-R\'enyi case or for some models of random graphs with dependencies \cite{MR3383337,MR4091094,sambale2020concentration}) or to statistical applications, e.g., in testing Ising models \cite{10.1145/3406325.3451074} and signal processing \cite{MR3788186}.

\section{Abstract formulations}\label{S:abstract_formulations}
In this section we recall some notions from the theory of Markov semigroups and formulate the abstract counterparts of the results of Section~\ref{S:CB_results} and of Theorem~\ref{T:d_alpha}.
We believe that the results presented in this section might be of separate interest as they provide a general framework for proving concentration on the hypercube.
We stress that most of the proof techniques that we exploit were known previously -- our main contribution is the abstract formulation of these results by means of the novel stability condition (cf. Definition~\ref{D:stability}).

Throughout this section we will rely on the usual notions from the theory of Markov processes and Dirichlet forms specialized to finite state space. We will briefly recall them and refer to \cite{MR2574430,MR3075390,MR2283379} for details.

\subsection{Modified log-Sobolev inequalities}
Let $L$ be the generator of a jump Markov process on some finite probability space $(M,\pi)$. In what follows we will sometimes treat $L$ as a linear operator on $\RR^M$ and sometimes identify it with the corresponding matrix, indexed by the elements of $M$.

Assume that $L$ satisfies the detailed-balance condition
\begin{equation}\label{eq:detailed-balance}
	\forall\; x,y\in M\quad
	\pi(x)L(x,y) = \pi(y)L(y,x),
\end{equation}
which implies that $\pi$ is a stationary measure for the Markov process and $L$ is self-adjoint on $L^2(\pi)$.
In this article we consider only Markov processes satisfying the above condition, which may not be stated explicitly in all the results.

For a given $L$, we define $\Delta(L) := \max_x -L(x,x)=\max_x \sum_{y\colon y\neq x} L(x,y)$ and write $\calE(f,g) = -\pi(fLg)$ for the Dirichlet form associated with~$L$.
In particular $\calE(f,g) = \pi\big( \Gamma(f,g) \big)$, where $\Gamma\colon  \RR^M \times \RR^M \to \RR^M$ given by
\begin{equation}\label{eq:Gamma_def}
 \Gamma(f,g)(x)
 =
 \frac{1}{2}
 \sum_{y\in M} (f(x)-f(y))(g(x)-g(y))L(x,y)
\end{equation}
is the corresponding carr\'e-du-champ operator.
We use shorthand notation $\Gamma(f,f)=:\Gamma(f)$ and observe that by the detailed-balance condition~\eqref{eq:detailed-balance} we have $\pi\big( \Gamma(f) \big)=\pi(\Gamma_+(f))$, where
\begin{equation}\label{eq:Gamma+def}
	\Gamma_+(f)(x)
	=
	\sum_{y\in M} (f(x)-f(y))_+^2L(x,y).
\end{equation}

Finally, we denote by $\rho(L)$ the best (the greatest) constant such that the following modified log-Sobolev inequality is satisfied
\begin{equation}\label{eq:mLSI}
\rho(L)\Ent_\pi(f) \le \calE(f,\log f)
\end{equation}
for all non-constant functions $f\colon M\to [0,\infty)$, where $\Ent_\pi(f) = \pi(f\log f)-\pi(f)\log\pi(f)$ is the entropy functional (we adopt the convention $0\log 0 = 0$).
We remark that $\rho(L)$ is positive iff $L$ is irreducible on the support of $\pi$ (see the discussion in~\cite{MR2283379} and~\cite[Chapter~12]{MR3726904}).
In what follows we will restrict our attention to this situation, without mentioning this assumption explicitly in each statement.

A classical observation, often referred to as Herbst's argument (cf. the monographs~\cite{MR1849347} by Ledoux and~\cite{MR3185193} by Boucheron et al.), says that for any $f\colon M\to\RR$,
\begin{equation}\label{eq:Herbsst_arg}
 \pi\big(f > \pi( f )+ t\big) \le \exp(-t^2\rho(L)/4\norm{\Gamma_+(f)}_\infty),
\end{equation}
where $\|\cdot\|_\infty$ stands for the norm in $L^\infty(\pi)$.
\subsection{Flip-swap random walks}
After Hermon and Salez~\cite{hermon2019modified}, we say that a kernel $L$ generates a flip-swap random walk if $L(x,y)>0$ implies that $x=y^i$ for some $i\in [n]$ (i.e., $x$ and $y$ differ by a flip) or $x=y^{ij}$ for some $i\neq j$, $i,j\in [n]$ (i.e., $x$ and $y$ differ by a swap).
The main contribution of~\cite{hermon2019modified} can be stated in the following way.
\begin{theorem}[Hermon--Salez~\cite{hermon2019modified}]\label{T:HS-main}
For any measure $\pi$ on $\calBn$ satisfying the SCP, there exists a kernel $L$ generating a reversible flip-swap random walk with stationary measure $\pi$ such that $\rho(L)\ge 1$ and $\Delta(L)\le n$.
If $\pi$ is also $k$-homogeneous, then $\Delta(L)\le 2k$ as well.
\end{theorem}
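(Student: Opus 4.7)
The plan is to construct $L$ explicitly from the SCP couplings and then establish the modified log-Sobolev inequality by induction on $n$, relying on the fact that the SCP is preserved under conditioning on any subset of coordinates.

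For the construction, I would fix a coordinate, say the last one, and apply the SCP to $S = \{n\}$ to obtain a coupling $(U,V)$ of the conditional laws $\mathcal{L}(X_{<n}\mid X_n = 0)$ and $\mathcal{L}(X_{<n}\mid X_n = 1)$ with $U \triangleright V$. Each atom of the coupling either has $U = V$, encoding a flip at coordinate $n$, or $U = V + e_j$, encoding a swap of coordinates $n$ and $j$; in the $k$-homogeneous setting the two conditional laws live on slices of different Hamming weight, so only swaps occur. The resulting atomic masses, renormalised by the appropriate ratios of $\pi$-probabilities to enforce detailed balance, define flip/swap rates. Iterating this over every choice of distinguished coordinate and summing yields $L$. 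Reversibility is automatic from the symmetry of the construction, while the bound $\Delta(L) \le n$ (respectively $\le 2k$ in the homogeneous case) follows by bookkeeping: each single-coordinate coupling has total mass $1$, and in the $k$-homogeneous case only the $k$ ``one-valued'' and $k$ ``zero-valued'' movable coordinates of a given $x$ can contribute swaps.

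For $\rho(L) \ge 1$, I would proceed by induction on $n$ via the entropy chain rule
\[
\Ent_\pi(f) = \nu(0)\Ent_{\pi_0}(f) + \nu(1)\Ent_{\pi_1}(f) + \Ent_\nu\bigl(x_n \mapsto \pi_{x_n}(f)\bigr),
\]
where $\nu = \mathcal{L}(X_n)$ and $\pi_{x_n} = \pi(\cdot\mid X_n = x_n)$. Each conditional measure $\pi_{x_n}$ inherits the SCP on $\{0,1\}^{n-1}$ (and the $k$-homogeneity becomes $(k - x_n)$-homogeneity), so by the inductive hypothesis the first two terms are bounded by the Dirichlet form of the sub-dynamics that leave $X_n$ fixed, and these embed into $L$. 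The remaining marginal entropy on the two-point space is controlled via the sharp two-point modified log-Sobolev inequality applied along the SCP coupling of $\pi_0$ and $\pi_1$, which identifies it with the contribution to $\calE(f, \log f)$ of the flip/swap transitions generated at coordinate $n$.

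The main obstacle lies in this last step: a direct application of Jensen's inequality along the coupling would cost a factor larger than $1$ and destroy the bound $\rho(L) \ge 1$, so one has to invoke the Hermon--Salez argument that pairs the sharp two-point mLSI with the monotonicity of the covering coupling (a data-processing type inequality for entropy) in order to estimate the cross-layer contribution term by term rather than after averaging. Propagating the improved bound $\Delta(L) \le 2k$ in the homogeneous case through the induction is a subsidiary technical point but follows once the coupling construction is set up consistently across slices, since only swap transitions appear at each level and the number of admissible swaps from any configuration is controlled by $2k$.
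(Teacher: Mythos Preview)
The paper does not prove Theorem~\ref{T:HS-main}; it is quoted from Hermon--Salez and only the \emph{construction} of the generator is recalled later (equations~\eqref{def:Ql}--\eqref{eq:Qstar}) in the $k$-homogeneous case, with the bounds $\rho\ge 1$ and $\Delta\le 2k$ simply cited as Proposition~\ref{P:HS-kernel-equiv}. So there is no proof in the paper to compare against beyond that construction.

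Your sketch correctly reproduces the Hermon--Salez strategy: for each coordinate $l$ one forms a generator $Q^{(l)}$ by combining the SCP coupling between $\mathcal{L}(X_{\neq l}\mid X_l=0)$ and $\mathcal{L}(X_{\neq l}\mid X_l=1)$ with inductively built generators on each conditional law, proves $\rho(Q^{(l)})\ge 1$ via the entropy chain rule at coordinate $l$ plus the sharp two-point mLSI along the coupling, and then sets $Q^\ast=\frac{1}{n}\sum_l Q^{(l)}$. Two small corrections: first, it is an \emph{average} over $l$, not a sum; the factor $1/n$ is what balances $\rho(Q^\ast)\ge 1$ against $\Delta(Q^\ast)\le n$ (resp.\ $\le 2k$). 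Second, your bookkeeping for the $\Delta$ bound is not quite right: each single $Q^{(l)}$ contains the full sub-dynamics $L_{\pi_l}$ and hence has $\Delta(Q^{(l)})$ of the same order as $\Delta$ at the previous inductive step, so the bound $\Delta(Q^\ast)\le 2k$ does not come from counting movable coordinates of $x$ but from summing the inductive estimates $\Delta(L_{\pi_l})\le 2(k-x_l)$ over $l$, together with $\sum_l\PP(X_l\neq x_l)\le 2k$, and then dividing by $n$. With those adjustments your outline matches the cited argument.
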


Theorem~\ref{T:HS-main}, by means of Herbst's argument~\eqref{eq:Herbsst_arg}, implies (up to an absolute constant in the exponent) the estimate from Theorem~\ref{T:PP-main} after observing that for a flip-swap random walk and any $f\colon\calBn\to\RR$ that is 1-Lipschitz with respect to the Hamming distance $d_H$
\begin{align}\label{eq:vf-dh-estimate}
 \norm{\Gamma_+(f)}_\infty \le
 \Delta(L) \cdot\max_{x,y\in\calBn} \{\,(f(y)-f(x))_+^2 \colon L(x,y) > 0\,\} \le
 4\Delta(L).
\end{align}

Finally, Theorem~\ref{T:HS-main} is constructive -- in Section~\ref{S:CB_proofs} we revisit the inductive construction of $L$ from~\cite{hermon2019modified} in the context of conditional Bernoulli distribution.

\begin{remark}
There are many examples of flip-swap random walks on the hypercube in the literature, including, e.g., the Bernoulli--Laplace model, Glauber dynamics or base exchange random walk on matroids, cf. e.g.,~\cite{MR2283379,MR2094147,MR4091094,MR4203344}.
We note that the results of this section apply to any flip-swap random walk as long as we have control of its stability (cf. Definition~\ref{D:stability}) constant.
\end{remark}

It turns out that for the proofs of all the statements of Section~\ref{S:CB_results} it suffices to demonstrate that the following condition is true for some reversible generator $L$ with stationary measure $\pi(p,k)$ for which the modified log-Sobolev inequality~\eqref{eq:mLSI} is known.
\begin{definition}[Stability condition]\label{D:stability}
Let $L$ be a generator of a flip-swap random walk on $\calBn$ with invariant probability distribution $\pi$.
We say that the pair $(L,\pi)$ meets the stability condition with constant $R\ge 0$ (i.e., is $R$-stable) if it satisfies the modified log-Sobolev inequality~\eqref{eq:mLSI} and
\begin{align}\label{eq:stability-condition}
\max_{x\in\supp\pi;\,i\in [n]} \sum_{y\colon y_i\neq x_i} L(x,y) \le R\rho(L).
\end{align}
If it is clear from the context which measure $\pi$ is associated with $L$, we will often omit it in the discussion and simply say that $L$ is $R$-stable.
\end{definition}

\begin{remark}\label{rem:R-lower-bound}
If $\pi$ is not concentrated on a single point, then a random walk on $\calBn$ with a generator $L$ that satisfies the modified log-Sobolev inequality~\eqref{eq:mLSI} may be at best $0.25$-stable (i.e., $R \ge 0.25$).
Indeed, in this case there exists $i$ such that $\pi(\{x_i=1\}), \pi(\{x_i=0\})>0$.
If $L$ satisfies the modified log-Sobolev inequality, then it also satisfies the Poincar\'e inequality $\frac{1}{2}\rho(L)\Var_\pi(f) \le \calE(f,f)$, see e.g., \cite[Proposition B.5]{adamczak2020modified}.
Therefore, by the stability condition~\eqref{eq:stability-condition} applied to the function $f(x) = \indbr{x_i=1}$ and reversibility of~$L$ we get that
\begin{align*}
	R\rho(L)\pi(\{x_i=1\})
	&\ge
	\sum_{x\colon x_i = 1}
	\sum_{y\colon y_i = 0}L(x,y)\pi(x)
	\\&=
	\sum_{x,y}
	(x_i-y_i)^2_+L(x,y)\pi(x)
	\\&=
	\calE(f, f)
	\\&\ge
	\frac{1}{2}\rho(L)\Var_\pi (f)
	=
	\frac{1}{2}\rho(L)\pi(\{x_i=1\})\pi(\{x_i=0\}),
\end{align*}
which gives $R \ge 0.5 \cdot \pi(\{ x_i=0 \})$.
Similarly, by considering $f(x) = \indbr{x_i=0}$ we get that $R \ge 0.5\cdot \pi(\{ x_i=1 \})$ as well, yielding $R \ge 0.25$.

This bound is optimal, as can be seen for $\pi$ being the uniform measure on $\calBn$ and $L(x,y) = 1$ if there exists $i$ such that $y= x^i$, $L(x,y) = -n$ if $y=x$ and $L(x,y) = 0$ otherwise (this corresponds to the special case of Glauber dynamics, in which at rate $n$, a random coordinate is flipped). In this case $\rho(L) = 4$ (see \cite[Example 3.7]{MR2283379}, note a different normalization of both the Dirichlet form and the constant in the modified log-Sobolev inequality), whereas for all $x\in\calBn$
\begin{displaymath}
\max_{i}\sum_{y\colon y_i\neq x_i} L(x,y) = L(x,x^i) = 1 = 0.25 \cdot \rho(L).
\end{displaymath}
\end{remark}

Let us illustrate the notion of $R$-stability with another classical example.
\begin{example}[Bernoulli--Laplace model]
Let $\pi$ be the uniform measure on the slice of $\calBn$ consisting of elements with exactly $k$ ones and let $L$ be given by $Lf(x) = \frac{1}{n}\sum_{i< j} (f(x^{ij}) - f(x))$ (thus the corresponding Markov process at rate $(n-1)/2$ swaps a uniformly chosen pair of coordinates). In the matrix form this corresponds to $L(x,y) = \frac{1}{n}$ if $x \neq y$ and $y = x^{ij}$, $L(x,x) = - k(n-k)/n$ and $L(x,y) = 0$ otherwise. It has been proved in \cite{MR2023890} and independently in \cite{MR2283379} that $\rho_0(L) \ge 1/2$. At the same time $\sum_{y\colon y_i \neq x_i} L(x,y)$ equals to $(n-k)/n$ if $x_i = 1$ and to $k/n$ otherwise. This shows that $L$ is $2$-stable, independently of $n$ and $k$. As mentioned in the introduction, the uniform measure on the slice of the discrete cube can be interpreted as the distribution of i.i.d. Bernoulli variables conditioned on their sum being equal to $k$. In Proposition~\ref{P:L-stability} we generalize the above observation on stability and show that if $\mu$ is the law of general independent Bernoulli variables conditioned on their sum being equal to a fixed constant, there exists a $2$-stable generator of a random walk reversible with respect to $\mu$.
\end{example}

\begin{remark}
Observe that the notion of stability is invariant under scaling of $L$ (change of time), i.e., if $L$ is $R$-stable then so is $aL$ for any $a > 0$.
This leads to a tensorization property for measures admitting an $R$-stable generator. More precisely, let $\pi_1,\ldots,\pi_m$ be measures on $\mathcal{B}_{n_1},\ldots,\mathcal{B}_{n_m}$, for which there exist reversible flip-swap random walks with $R$-stable generators $L_1,\ldots,L_m$.
By changing time, we can assume without loss of generality that $\rho(L_i) = \rho$ for all $i \le m$. Let $n = n_1+\ldots+n_m$ and consider the product measure $\pi = \pi_1\otimes\cdots\otimes \pi_m$ on $\calBn$ together with the generator $L = L_1+\ldots+L_m$, where we think of $L_i$ as acting only on the $i$-th block of coordinates on $\calBn = \mathcal{B}_{n_1}\times\cdots\times \mathcal{B}_{n_m}$, i.e., we identify $L_i$ with its tensor product with identity on $\otimes_{j\neq i} \RR^{\mathcal{B}_{n_j}}$. In the matrix form we have the representation
\begin{displaymath}
  L(x,y) = \sum_{i=1}^m L_i(P_i x,P_i y) \prod_{j\neq i} \indbr{P_j x = P_j y},
\end{displaymath}
where $P_j \colon \calBn \to \mathcal{B}_j$ is the projection onto the $j$-th factor in the product $\calBn = \mathcal{B}_{n_1}\times\cdots\times \mathcal{B}_{n_m}$
Thanks to the well known tensorization property of the entropy (see, e.g., \cite[Chapter 3]{MR1845806}) we have $\rho(L) = \rho$.
Moreover, for $i \in (n_1+\ldots+n_{j-1},n_1+\ldots+{n_{j}}]$,
\[
\sum_{y\in \calBn \colon y_i\neq x_i} L(x,y) = \sum_{y\in\mathcal{B}_{n_j}\colon y_l \neq (P_j x)_l} L_j((P_jx),y) \le R\rho,\] where $l = i - (n_1+\ldots+n_{j-1})$.
Thus $L$ is indeed $R$-stable.

This observation allows in particular to extend all the theorems od Section \ref{S:CB_results} to product of measures $\pi(n,k)$ allowing for more general conditioning of Bernoulli variables.

\end{remark}
\subsection{Abstract formulations}\label{ss:abstract-formulations}
Finally, let us present the counterparts of the results of Section~\ref{S:general_SCP_results} and of Theorem~\ref{T:d_alpha} from Section~\ref{S:CB_results} in the abstract language of the stability condition~\eqref{eq:stability-condition}.
We stress here that it is the sole property needed for their proofs, which are deferred to Section~\ref{S:CB_proofs}.

We start with a bounded-difference type inequality for real valued functions.

\begin{proposition}\label{P:abstract_dalpha}
If a flip-swap random walk on $\calBn$ with stationary distribution $\pi$ and generator $L$ satisfies the stability condition~\eqref{eq:stability-condition}, then for any $f\colon \calBn\to\RR$ and $\alpha\in \RR_+^n$ such that
\[
	\abs{f(x)-f(y)}
	\le
	d_\alpha(x,y)
	\quad
	\forall\, x,y\in\calBn
\]
the following estimate holds for all $t>0$
\begin{align*}
 \pi\big(f > \pi(f) + t \big) \le
 \exp\Big(
 -\frac{t^2}{8R \abs{\alpha}^2}
 \Big).
\end{align*}
In the above estimate one can also replace $8\abs{\alpha}^2$ with $16\sum_{i=1}^{\ceil{\Delta(L)/R\rho(L)}}(\alpha_i^\downarrow)^2$.
\end{proposition}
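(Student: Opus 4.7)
The plan is to invoke Herbst's argument in the form~\eqref{eq:Herbsst_arg}, reducing the task to an upper bound on $\|\Gamma_+(f)\|_\infty$. The crucial observation is that for a flip-swap random walk the nonzero transitions force $y$ to be of the form $x^i$ or $x^{ij}$, and the weighted Lipschitz assumption then controls the corresponding increments by $\alpha_i$ and $\alpha_i+\alpha_j$ respectively.

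First I would fix $x\in\supp\pi$ and expand $\Gamma_+(f)(x)$ via~\eqref{eq:Gamma+def}, splitting the sum into flip and swap contributions. For a flip neighbour $y=x^i$ the Lipschitz bound gives $(f(x)-f(y))_+^2\le\alpha_i^2$; for a swap neighbour $y=x^{ij}$ (which requires $x_i\neq x_j$) the elementary inequality $(\alpha_i+\alpha_j)^2\le 2(\alpha_i^2+\alpha_j^2)$ yields $(f(x)-f(y))_+^2\le 2(\alpha_i^2+\alpha_j^2)$. Regrouping the double sum by the coordinate being changed,
\[
\Gamma_+(f)(x)\le 2\sum_{i=1}^n \alpha_i^2\Bigl(L(x,x^i)+\sum_{j\colon x_j\neq x_i}L(x,x^{ij})\Bigr)=2\sum_{i=1}^n \alpha_i^2\sum_{y\colon y_i\neq x_i}L(x,y),
\]
since the $y$'s with $y_i\neq x_i$ are precisely the flip at position $i$ and the swaps pairing $i$ with a $j$ of opposite value. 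The stability condition~\eqref{eq:stability-condition} bounds each inner sum by $R\rho(L)$, which yields $\|\Gamma_+(f)\|_\infty\le 2R\rho(L)|\alpha|^2$ and, via~\eqref{eq:Herbsst_arg}, the first estimate of the proposition.

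For the refined bound I would additionally control the total rate $\sum_{i=1}^n\sum_{y\colon y_i\neq x_i}L(x,y)$: every flip rate $L(x,x^i)$ contributes once to this quantity while every swap rate $L(x,x^{ij})$ contributes twice, so the total is at most $2(-L(x,x))\le 2\Delta(L)$. Writing $c_i:=\sum_{y\colon y_i\neq x_i}L(x,y)$, the vector $(c_i)$ lies in the polytope $\{0\le c_i\le R\rho(L),\ \sum_i c_i\le 2\Delta(L)\}$, over which $\sum_i\alpha_i^2 c_i$ is maximised (by the standard rearrangement argument) by saturating the cap on the $\lceil 2\Delta(L)/R\rho(L)\rceil$ indices of largest $\alpha_i^2$. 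Combining this with the bound $\lceil 2x\rceil\le 2\lceil x\rceil$ and the monotonicity of $(\alpha_i^\downarrow)^2$ gives $\sum_i\alpha_i^2 c_i\le 2R\rho(L)\sum_{i=1}^{\lceil\Delta(L)/R\rho(L)\rceil}(\alpha_i^\downarrow)^2$, and a final application of~\eqref{eq:Herbsst_arg} produces the promised constant~$16$.

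The hard part will be the combinatorial bookkeeping: carefully tracking the double-counting of swap edges both when regrouping by coordinate and when estimating the total rate, and organising the rearrangement so that the losses incurred in $(\alpha_i+\alpha_j)^2\le 2(\alpha_i^2+\alpha_j^2)$, in the doubling of the truncation length, and in the factor $4$ from~\eqref{eq:Herbsst_arg}, combine into exactly the stated constants $8$ and $16$ rather than something larger.
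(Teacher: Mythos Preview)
Your proposal is correct and follows essentially the same route as the paper: both bound $\Gamma_+(f)(x)\le 2\sum_i\alpha_i^2\sum_{y\colon y_i\neq x_i}L(x,y)$ via the flip/swap decomposition and $(\alpha_i+\alpha_j)^2\le 2(\alpha_i^2+\alpha_j^2)$, invoke stability, and conclude by Herbst's argument. The only cosmetic difference is that the paper packages your rearrangement step into an abstract Krein--Milman-type lemma (Lemma~\ref{L:analytical_mtx} in the scalar case, applied with $T_1=4\Delta(L)$ and $T_\infty=4R\rho(L)$), whereas you carry out the elementary optimisation directly together with $\lceil 2x\rceil\le 2\lceil x\rceil$.
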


\begin{remark}
Using the definitions of $R$-stability and of $\Delta(L)$ one can  see that $\Delta(L)/R\rho(L) \le n$ and if $\pi$ is $k$-homogeneous, then
$\Delta(L)/R\rho(L) \le k$.
\end{remark}

Let us now pass to the matrix-valued case.

\begin{proposition}\label{P:abstract_matrix}
Let a flip-swap random walk on $\calBn$ with stationary distribution $\pi$ and generator $L$ satisfy the stability condition~\eqref{eq:stability-condition}.
Assume also that $f\colon\calBn\to\calHd$ is such that there is a sequence of positive semidefinite matrices $C_1,\ldots,C_n$ satisfying
\begin{equation}\label{eq:abstract_matrix_f_condition}
(f(x) - f(x^i))^2
\preccurlyeq
C_i^2
\quad\forall\;x\in\calBn,\,i\in[n],
\end{equation}
where $\preccurlyeq$ denotes the positive semidefinite order on the set of symmetric matrices.
Set the variance proxy
\[
 \sigma^2 =
 8R
 \cdot
 \sup\Big\{\,
 \big\|
 	\sum_{i\in \mathcal{I}} C_i^2
 \big\|
 \,\colon\,
 \abs{\mathcal{I}} = \ceil{\Delta(L)/R\rho(L)},\,
 \mathcal{I}\subset [n]
 \,\Big\}.
\]
Then for any $t>0$,
\[
 \pi\big(
 \lambda_{max}(f - \pi(f)) > t
 \big)
 \le
 d\exp\big(
 -t^2/(\sigma^2+\sigma t)
 \big).
\]
\end{proposition}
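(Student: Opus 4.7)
The strategy is a matrix analogue of the Herbst argument behind Proposition~\ref{P:abstract_dalpha}. Writing $\bar f = f - \pi(f)$, the matrix Chernoff bound gives
$$\pi\!\big(\lambda_{max}(\bar f) > t\big) \le d\, e^{-\theta t}\, \pi\!\big(\operatorname{tr}\exp(\theta \bar f)\big)$$
for any $\theta > 0$, so it suffices to show that $\Lambda(\theta) = \log\pi(\operatorname{tr}\exp(\theta\bar f))$ satisfies a bound of the form
$$\Lambda(\theta) \le \frac{\theta^2 \sigma^2/2}{1 - \sigma \theta/2}$$
on the appropriate interval of $\theta$; Chernoff optimization then yields the stated Bernstein inequality.

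To bound $\Lambda$, I apply the scalar modified log-Sobolev inequality~\eqref{eq:mLSI} to the strictly positive function $g(x) = \operatorname{tr}\exp(\theta f(x))$. A Klein/Peierls--Bogoliubov trace inequality converts the resulting Dirichlet form $\calE(g, \log g)$ into an expression of the form $\tfrac{\theta^2}{2}\pi(\operatorname{tr}(D(x) e^{\theta f(x)}))$, where
$$D(x) = \sum_{y} L(x,y)\,(f(x) - f(y))^2.$$
Since $L$ generates a flip-swap walk, each $y \ne x$ with $L(x,y) > 0$ is either of the form $x^i$ or $x^{ij}$. For flips, the assumption~\eqref{eq:abstract_matrix_f_condition} gives $(f(x)-f(x^i))^2 \preccurlyeq C_i^2$ directly; for swaps, telescoping through $x \to x^i \to x^{ij}$ and applying the matrix AM--GM inequality $(A+B)^2 \preccurlyeq 2(A^2 + B^2)$ gives $(f(x)-f(x^{ij}))^2 \preccurlyeq 2(C_i^2 + C_j^2)$. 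Collecting contributions by coordinate and invoking the stability condition~\eqref{eq:stability-condition}, one obtains $D(x) \preccurlyeq c R\rho(L) \sum_{i \in S(x)} C_i^2$, where $S(x)$ denotes the set of coordinates that can change in a single transition from $x$.

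A weighted majorization argument then bounds $\|\sum_{i \in S(x)} C_i^2\|$ by the supremum defining $\sigma^2$: since $\sum_y L(x,y) \le \Delta(L)$ while the per-coordinate flip rate is at most $R\rho(L)$, the effective number of coordinates contributing non-trivially is at most $\ceil{\Delta(L)/R\rho(L)}$, and the norm is majorized by the top such summands (exactly mirroring the step behind Proposition~\ref{P:abstract_dalpha}). Combined with $\operatorname{tr}(A e^{\theta f(x)}) \le \|A\|\,g(x)$ for $A \succeq 0$, this turns the modified log-Sobolev inequality into a differential inequality for $\Lambda$. The sub-Gaussian piece $\tfrac{1}{2}\theta^2\sigma^2$ emerges immediately; the Bernstein correction $(1-\sigma\theta/2)^{-1}$ has to be extracted by refining the trace inequality so as to keep control of $e^{\theta f(y)}$ along the transition (in the spirit of the Kyng--Song argument used in Theorem~\ref{T:weak-martingale}), using that $\|e^{\theta(f(y)-f(x))}\|$ is controlled by powers of $e^{\theta\|C_i\|}$, which is bounded in terms of $\sigma$. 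The main obstacle is precisely this last point: extracting a genuine Bernstein (rather than merely sub-Gaussian) form from a \emph{scalar} modified log-Sobolev inequality applied to a matrix trace exponential requires a careful interleaving of the flip-swap decomposition with the noncommutative trace inequality, so that the linear-in-$\theta$ correction in the MGF bound is properly matched with the operator norms of the $C_i$'s while respecting the combinatorial factor $\ceil{\Delta(L)/R\rho(L)}$.
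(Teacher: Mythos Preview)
Your proposal has a genuine gap, and you in fact identify it yourself in the final paragraph: you do not actually carry out the step that converts the scalar modified log-Sobolev inequality, applied to $g(x)=\operatorname{tr}\exp(\theta f(x))$, into a Bernstein-type differential inequality for $\Lambda(\theta)$. The sentence ``a Klein/Peierls--Bogoliubov trace inequality converts the resulting Dirichlet form $\calE(g,\log g)$ into an expression of the form $\tfrac{\theta^2}{2}\pi(\operatorname{tr}(D(x)e^{\theta f(x)}))$'' is the crux of the matter and is not justified: $\log g(x)=\log\operatorname{tr}e^{\theta f(x)}$ is a \emph{scalar}, not $\theta f(x)$, so the increments $(\log g(x)-\log g(y))(g(x)-g(y))$ do not reduce in any obvious way to $\operatorname{tr}\big((f(x)-f(y))^2 e^{\theta f(x)}\big)$. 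Moreover, even granting a sub-Gaussian bound on $\Lambda$, your plan to ``refine the trace inequality so as to keep control of $e^{\theta f(y)}$ along the transition'' and thereby produce the factor $(1-\sigma\theta/2)^{-1}$ is not a proof but a hope; the Kyng--Song argument you invoke is a martingale argument, not a semigroup one, and it is not clear how to transplant it here.

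The paper avoids this difficulty entirely by taking a different and much shorter route. It bounds $\Gamma(f)(x)$ in the positive semidefinite order (your steps involving operator convexity and the flip/swap decomposition are essentially correct and match the paper here), then applies Lemma~\ref{L:analytical_mtx} with $t_i=\sum_{y:y_i\neq x_i}L(x,y)$, $T_1=2\Delta(L)$, $T_\infty=2R\rho(L)$ to obtain a uniform bound on $\sup_x\|\Gamma(f)(x)\|$. The concentration inequality is then read off \emph{as a black box} from Aoun--Banna--Youssef~\cite{MR4108222}: the scalar modified log-Sobolev inequality implies the scalar Poincar\'e inequality with constant $C_P=2/\rho(L)$, which by \cite{MR4216521,MR4245747} lifts to the matrix Poincar\'e inequality~\eqref{eq:Poincare}, and \eqref{eq:matrix_poincare_concentration} then gives the stated Bernstein bound directly. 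In particular the paper never performs a matrix Herbst argument; the sub-exponential tail comes from the cited Poincar\'e-to-concentration result, not from a differential inequality for $\Lambda$.
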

Our next proposition is the convex distance inequality under $R$-stability.

\begin{proposition}\label{P:abstract_Talagrand}
If a flip-swap random walk on $\calBn$ with some stationary distribution $\pi$ and a generator $L$ satisfies the stability condition~\eqref{eq:stability-condition}, then for any set $A\subset\calBn$
\[
\pi(A)
\pi\Big(
\exp\Big(\frac{1}{40R+4}\cdot d_T^2(\cdot, A) \Big)
\Big) \le 1.
\]
\end{proposition}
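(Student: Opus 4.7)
My plan is to combine the modified log-Sobolev inequality (mLSI) guaranteed by $R$-stability with a self-bounding property for $Z := d_T(\cdot, A)$ derived from the dual characterization of the convex distance, and to exploit the boundary condition $\pi(A) = \pi(Z = 0)$ via a Herbst-type argument.

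I will first establish the pointwise self-bounding estimate $\Gamma_+(Z) \le 2R\rho(L)$. For each $x \in \calBn$ let $\alpha^\ast(x) \in \RR_+^n$ with $|\alpha^\ast(x)| \le 1$ attain the supremum in $Z(x) = \sup_{|\alpha| \le 1} d_\alpha(x, A)$. The reverse triangle inequality for $d_{\alpha^\ast(x)}$ yields $(Z(x) - Z(y))_+ \le \sum_i \alpha^\ast_i(x) \ind{x_i \ne y_i}$. Squaring this bound, summing against $L(x, \cdot)$ using the flip-swap structure, and applying the $R$-stability condition separately to each coordinate gives the claimed inequality. The elementary chain rule $(a^2 - b^2)_+ \le 2a(a - b)_+$ for $a,b\ge 0$ then upgrades it to $\Gamma_+(Z^2) \le 8 R \rho(L) Z^2$ pointwise.

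Next I will apply mLSI to $F_\lambda := e^{\lambda Z^2}$. The standard bound $\calE(e^g, g) \le \pi(e^g \Gamma_+(g))$, which follows from detailed balance together with the convexity estimate $(e^a - e^b)(a - b) \le (a - b)_+^2 e^{a \vee b}$, combined with the previous step, gives $\Ent_\pi(e^{\lambda Z^2}) \le 8R\lambda^2 \pi(Z^2 e^{\lambda Z^2})$. Setting $F(\lambda) := \pi(e^{\lambda Z^2})$, this rewrites as the differential inequality $\lambda(1 - 8R\lambda) F'(\lambda) \le F(\lambda) \log F(\lambda)$ on $[0, 1/(8R))$. Introducing $U(\lambda) := \lambda^{-1}\log F(\lambda)$ and integrating from $0^+$ (where $U(0^+) = \pi(Z^2)$) yields $\log F(\lambda) \le \lambda \pi(Z^2)/(1 - 8R\lambda)$.

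Finally, to convert this into the desired estimate involving $\pi(A)$, I will control $\pi(Z^2)$ by $\log(1/\pi(A))$. Applying standard Herbst to $Z$ itself (using $\|\Gamma_+(Z)\|_\infty \le 2R\rho(L)$ from the first step) yields two-sided sub-Gaussian concentration $\pi(|Z - \pi(Z)| > t) \le 2 \exp(-t^2/(8R))$; the lower tail combined with $\pi(A) = \pi(Z = 0)$ forces $\pi(Z)^2 \le 8R \log(1/\pi(A))$, while the Poincar\'e inequality implied by mLSI provides $\Var_\pi(Z) \le 4R$, so $\pi(Z^2) \le 4R(1 + 2\log(1/\pi(A)))$. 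Inserting this into $\log F(\lambda) \le \lambda\pi(Z^2)/(1-8R\lambda)$ and specializing to $\lambda = 1/(40R + 4)$ should give $\log F(\lambda) \le \log(1/\pi(A))$. The main delicate point lies in this last step: the specific form $1/(40R + 4)$ arises from a careful balancing of the constants in the differential inequality against the estimate for $\pi(Z^2)$ so that the conclusion holds uniformly for every $\pi(A) \in (0, 1)$; in the regime where $\pi(A)$ is close to $1$ the bare Poincar\'e variance bound becomes loose and may need to be replaced by a sharper estimate exploiting that $\{Z > 0\} \subset A^c$ has small measure.
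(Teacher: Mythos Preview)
Your first two steps are correct and coincide with the paper's Lemmas~6.2 and~6.3: the self-bounding estimate $\Gamma_+(Z^2)\le 8R\rho(L)\,Z^2$ holds, and it implies $\log F(\lambda)\le \lambda\,\pi(Z^2)/(1-8R\lambda)$, so that at $\lambda=1/(40R+4)$ what remains is precisely the inequality $\pi(Z^2)\le (32R+4)\log(1/\pi(A))$.

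The gap is in your last step, and it is twofold. First, Herbst applied with $\|\Gamma_+(Z)\|_\infty$ yields only the \emph{upper} tail of $Z$; the lower tail would require a pointwise bound on $\Gamma_+(-Z)(x)=\sum_y (Z(y)-Z(x))_+^2 L(x,y)$, but your reverse-triangle argument produces $(Z(y)-Z(x))_+\le \sum_i \alpha_i^\ast(y)\ind{x_i\ne y_i}$ with the maximizer taken at $y$, not at $x$. Since $\alpha^\ast(y)$ varies with $y$, you cannot pull it out of the sum and invoke $R$-stability; the crude bound $(\alpha_i^\ast(y))^2\le 1$ only gives $\Gamma_+(-Z)(x)\le 4\Delta(L)$, which is the wrong quantity. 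Second, even granting your claimed bounds $\pi(Z)^2\le 8R\log(1/\pi(A))$ and $\Var_\pi(Z)\le 4R$, their sum $\pi(Z^2)\le 4R+8R\log(1/\pi(A))$ carries the additive constant $4R$, so the required inequality $\pi(Z^2)\le (32R+4)\log(1/\pi(A))$ fails whenever $\log(1/\pi(A))<R/(6R+1)$. You cannot patch this by tweaking constants: the right-hand side must vanish as $\pi(A)\to 1$, and your variance term does not.

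The paper closes this gap by a direct lower-tail entropy argument: it applies the mLSI to $e^{-\lambda Z^2}$ and uses, crucially, the additional bound $d_T^2(x,A)-d_T^2(y,A)\le d_H(x,y)\le 2$ for flip-swap neighbours to control the factor $h(\lambda\,(f(x)-f(y)))$ arising from $(e^{-\lambda f(y)}-e^{-\lambda f(x)})$. After the self-bounding estimate and the anti-correlation $\pi(f e^{-\lambda f})\le \pi(f)\pi(e^{-\lambda f})$, the entropy method yields $\pi(A)=\pi(f=0)\le \exp\!\big(-\pi(Z^2)/(32R+4)\big)$ directly, which is exactly the missing inequality.
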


Finally, let us state the concentration result for polynomials in an abstract version.

\begin{proposition}\label{P:abstract-polynomials}
If a flip-swap random walk on $\calBn$ with some stationary distribution $\pi$ and a generator $L$ satisfies the stability condition~\eqref{eq:stability-condition}, then for any tetrahedral polynomial $f\colon\calBn\to\RR$ of degree $d$
\[
\pi\big(\big|f - \pi(f)\big| \ge t\big)
\le
2\exp\Big(
-\frac{1}{C_d}\min_{1\le r \le d} \min_{\mathcal{J}\in P_r}
\Big(
\frac{t}{R^{r/2}\|\pi( \nabla^r f)\|_{\mathcal{J}}}
\Big)^{2/|\mathcal{J}|}
\Big),
\]
where $C_d$ is a constant depending only on the degree $d$ of $f$.
\end{proposition}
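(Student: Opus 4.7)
The plan is to iterate, in the style of Latała~\cite{MR2294983}, a scalar moment inequality derived from the modified log-Sobolev inequality~\eqref{eq:mLSI} combined with the stability condition. The argument proceeds in four conceptual steps.

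First, a standard Herbst/Aida--Stroock argument converts~\eqref{eq:mLSI} into the $L^p$ estimate
\[
\|g - \pi(g)\|_{L^p(\pi)} \le C\sqrt{p/\rho(L)}\,\big\|\sqrt{\Gamma_+(g)}\big\|_{L^p(\pi)}, \qquad p\ge 2,
\]
valid for every $g\colon\calBn\to\RR$, where $\Gamma_+$ is defined in~\eqref{eq:Gamma+def}. Second, for a flip-swap walk and a tetrahedral $g$ one bounds $\Gamma_+(g)$ pointwise by $|\nabla g|^2$ up to a factor $R\rho(L)$. Indeed, $\Gamma_+(g)(x)$ is a sum over flip-neighbors $x^i$ and swap-neighbors $x^{ij}$ of squared increments weighted by transition rates. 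For a tetrahedral polynomial one has $D_i g(x)=\partial_i g(x)$ on the cube, and a swap increment telescopes as $g(x)-g(x^{ij})=\pm\partial_i g\pm\partial_j g$ evaluated on appropriate cube points, so every squared edge increment is at most a linear combination of $(\partial_i g)^2$ and $(\partial_j g)^2$. The stability condition~\eqref{eq:stability-condition} then ensures that for each coordinate $i$ the total rate of transitions flipping the $i$-th bit is at most $R\rho(L)$; summing produces $\Gamma_+(g)(x) \le CR\rho(L)\,|\nabla g(x)|^2$. Combining the two steps yields the key recursive inequality
\[
\|g - \pi(g)\|_{L^p(\pi)} \le C\sqrt{pR}\,\big\||\nabla g|\big\|_{L^p(\pi)}
\]
for every tetrahedral $g$ and $p\ge 2$.

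Third, one iterates this inequality along the gradient tower. Each partial derivative $\partial_\ii f$, $\ii\in[n]^r$, is again tetrahedral of degree $d-r$, and the recursion terminates after at most $d$ steps since $\nabla^{d+1}f\equiv 0$. At each level one splits the $L^p$-norm of the Hilbert--Schmidt norm of the current gradient tensor into a deterministic part (involving contractions of $\pi(\nabla^r f)$) and a centered part, to which the recursive inequality is applied anew. Each step either contracts a block of indices in $\ell^2$, closing a block of a partition $\mathcal{J}\in P_r$, or differentiates once more, opening a new block of $\mathcal{J}$; the accumulated constants match the duality between the $L^p$-norms of tensor-valued chaoses and the injective-tensor norms $\|\cdot\|_{\mathcal{J}}$, as worked out in the discrete setting in~\cite{MR3383337,MR3949267,sambale2020concentration,adamczak2020modified}. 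The outcome is
\[
\|f - \pi(f)\|_{L^p(\pi)} \le C_d\sum_{r=1}^d\sum_{\mathcal{J}\in P_r} p^{|\mathcal{J}|/2}\,R^{r/2}\,\|\pi(\nabla^r f)\|_{\mathcal{J}}.
\]

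Finally, a standard Chebyshev optimization over $p$ converts this moment bound into the claimed tail estimate. The main obstacle is the combinatorial bookkeeping in step three: one must ensure that the iterative splitting of $L^p$-norms of gradient tensors into deterministic and centered parts extracts \emph{exactly} the injective-tensor norms $\|\pi(\nabla^r f)\|_{\mathcal{J}}$ with the correct power $p^{|\mathcal{J}|/2}$, rather than weaker Hilbert--Schmidt-type surrogates. This has been carried out in closely related discrete settings in the references above, and the stability condition provides precisely the input needed to transport those arguments to the present abstract framework, with the factor $R^{r/2}$ tracking the $r$ applications of the basic recursive inequality.
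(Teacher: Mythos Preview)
There is a genuine gap in your Step~2. The pointwise bound $\Gamma_+(g)(x)\le CR\rho(L)\,|\nabla g(x)|^2$ is false for flip-swap walks, already for degree-two tetrahedral polynomials. A swap of coordinates $i,j$ (say $x_i=1$, $x_j=0$) produces
\[
g(x)-g(x^{ij})=\partial_i g(x)-\partial_j g(x)+\partial_i\partial_j g(x),
\]
so the cross-derivative $\partial_i\partial_j g(x)$ genuinely enters. Your telescoping $g(x)-g(x^{ij})=\pm\partial_i g(x)\pm\partial_j g(x^i)$ is correct, but $\partial_j g(x^i)$ is evaluated at $x^i$, which for a $k$-homogeneous $\pi$ lies \emph{outside} $\supp\pi$; it cannot be controlled by $|\nabla g(x)|^2$. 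Concretely, take $n=4$, $k=2$, the Bernoulli--Laplace generator, and $g(x)=x_1x_2-x_2$. At $x=(1,0,1,0)$ one has $\nabla g(x)=0$, yet $g(x)-g(x^{12})=1$, so $\Gamma_+(g)(x)\ge L(x,x^{12})>0$ while $|\nabla g(x)|^2=0$. Hence the ``key recursive inequality'' $\|g-\pi(g)\|_{L^p(\pi)}\le C\sqrt{pR}\,\||\nabla g|\|_{L^p(\pi)}$ is not available, and the clean first-order iteration you sketch in Step~3 cannot start.

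The paper repairs this by keeping the second-order term: from the stability condition one only gets
\[
\Gamma(g)(x)\le R\rho(L)\Big(C_1\sum_i|\partial_i g(x)|^2+C_2\sum_{i,j}|\partial_i\partial_j g(x)|^2\Big),
\]
leading to the two-term recursion $\|g-\pi(g)\|_p\le C\sqrt{pR}\big(\||\nabla g|\|_p+\|\|\nabla^2 g\|_{HS}\|_p\big)$. This forces a more elaborate induction in which each step may add either a singleton \emph{or} a doubleton block, hence one tracks partitions $\mathcal{J}\in P_{l,\le 2}$ rather than arbitrary $\mathcal{J}\in P_l$. The paper linearises both terms via independent Gaussian tensors $G_\mathcal{J}$, iterates to obtain a chaos-type bound indexed by $P_{l,\le 2}$, and only then invokes Latała's estimates to pass to the norms $\|\pi(\nabla^r f)\|_\mathcal{J}$. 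The final moment inequality you state is correct, but reaching it requires this second-order machinery, not the first-order recursion you propose.
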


\begin{remark}
Although Proposition~\ref{P:abstract_dalpha} gives a worse constant in the exponent than Theorem~\ref{T:d_alpha} even in the case of conditional Bernoulli distributions $\pi(p,k)$, we state it here as in principle it does not assume that $\pi$ satisfies the SCP and thus potentially can be applied in other settings.
\end{remark}

\begin{remark}
The above propositions can be transferred to more general random walks that change at each step at most a fixed number of coordinates $N$ (with $N=2$ in case of flip-swap random walks).
We do not pursue this direction though and do not write all the theorems in full generality for the sake of readability.
\end{remark}

\begin{remark}
Recently, Cryan et al.~\cite{MR4203344} have shown a version of Theorem~\ref{T:HS-main} for $k$-homogeneous strongly log-concave measures.
Strong log-concavity is yet another possible generalization of the SRP, which is in general incomparable with the SCP~\cite{MR4203344}.
It is known, cf. Br\"{a}nd\'{e}n and Huh~\cite{MR4172622}, that any $k$-homogeneous strongly log-concave measure is supported on the set of bases of some matroid of rank $k$.
Using this fact, and extending the previous results for uniform measures on the bases of matroids by Anari et al.~\cite{MR4003314} and Kaufman and Oppenheim~\cite{MR3857285}, Cryan et al.~\cite{MR4203344} explicitly construct a base-exchange random walk, which has any given strongly log-concave measure as a stationary distribution, and verify it satisfies the modified log-Sobolev inequality~\eqref{eq:mLSI}.

Since the base-exchange random walk proposed therein is a particular instance of a flip-swap random walk, a natural question is whether it satisfies the stability condition~\eqref{eq:stability-condition}, which would allow to deduce concentration results presented in this section.
Unfortunately, the answer seems to be negative in full generality as can be seen already in the case of independent Bernoulli random variables $B=(B_1,\ldots,B_n)$ with different probabilities of success $\PP(B_i=1)=p_i$ conditioned on their sum being $k$, i.e., for the distribution $\pi(p,k)\sim \mathcal{L}(B\,\vert\, \sum_i B_i=k)$.
If one chooses $p_1\to 1^-$ and $p_j=c$ for $j>1$ and some $c\in(0,1)$, then it is straightforward to verify that the base-exchange random walk of~\cite{MR4203344} is at best $k$-stable.
Therefore, applying propositions of Section~\ref{ss:abstract-formulations} to the base-exchange random walk gives much worse concentration constants than those of Section~\ref{S:CB_results}.
On the other hand, the flip-swap random walk proposed by Hermon and Salez~\cite{hermon2019modified} with stationary measure $\pi(p,k)$, which we use to prove the results of Section~\ref{S:CB_results}, turns out to be $2$-stable.

In view of the above, it is an interesting problem to analyze what other known kernels satisfy the stability condition~\eqref{eq:stability-condition} with good (dimension-independent) constant and to look for some other criteria that would allow to deduce this condition.
\end{remark}

\section{Proofs of the results of Section~\ref{S:general_SCP_results}}\label{S:general_SCP_proofs}
In this section we provide proofs of Theorems~\ref{T:d_alpha},~\ref{T:d_alpha_matrix} and~\ref{T:weak-martingale}.
All these results follow by modifications of the martingale argument due to Pemantle and Peres~\cite{MR3197973}.

Let $X\sim\pi$ be a random variable with values in $\calBn$ satisfying the SCP and
denote $\supp X = \{\, i\in\{1,\ldots,n\} : X_i = 1\,\}$.
In the non-homogeous case define a filtration $\calF=(\calF_l)_{l=0}^n$ by letting simply $\calF_0=\{\emptyset,\Omega\}$ and $\calF_l= \sigma(X_1,\ldots, X_l)$ for $l=1,\ldots,n$.  In the $k$-homogeous case introduce a family of random variables $Y_1,\ldots,Y_k$ given by the conditions
\begin{gather}\label{eq:definition-of-Y}
\mathcal{L}( Y_{1}\,\vert\,X ) = \Unif(\supp X\backslash \{1,\ldots,k\})
 \quad\text{and}\\
 \mathcal{L}(Y_{l}\,\vert\,X,Y_{1},\ldots,Y_{l-1}) =  \Unif(\supp X \setminus \{1,\ldots,k,Y_{1},\ldots, Y_{l-1}\}),
 \quad\text{for}\quad l=2,\ldots,k,\nonumber
\end{gather}
 where $\Unif(A)$ stands for the uniform distribution on the set $A$, and for notational simplicity we set $\Unif(\emptyset)$ to be the Dirac mass at $0$ and $X_0 \equiv 1$  (i.e., we add to $X$ an additional coordinate providing no information and if the above sampling scheme yields all elements from $\supp X$ before sampling some $Y_l$, we set $Y_i$ to zero for all $i\ge l$).
 Finally, define a filtration $\calG = (\calG_l)_{l=0}^{2k}$  setting $\calG_0=\{\emptyset,\Omega\}$ and $\calG_l=\sigma(X_1,\ldots,X_l)$ for $l\in [k]$, $\calG_{k+r}=\sigma(X_1,\ldots,X_k, Y_1,\ldots, Y_r\}$ for $r\in[k]$.

 In other words in the first $k$-steps the subsequent values of $X$ at the  first $k$ coordinates are revealed, while in the last $k$ steps one reveals in a uniformly random order the remaining coordinates at which $X$ takes the value $1$. Note that if $\alpha$ is nonincreasing (which we may assume without loss of generality) and $f$ is 1-Lipschitz with respect to $d_\alpha$ then the first part of this sampling scheme promotes the coordinates which may have the greatest impact on the value of $f(X)$. The construction can be thought of as a modification of the sampling scheme proposed by Pemantle and Peres in which one immediately starts revealing in a random order the coordinates at which $X$ takes the value $1$.

The proof of Theorems~\ref{T:d_alpha} and~\ref{T:d_alpha_matrix} will be based on the following two lemmas.
 \begin{lemma}\label{L:Ml_estimate} Let $\alpha \in \RR_+^n$ be nonincreasing and let $f\colon\calBn\to\calHd$ be $1$-Lipschitz with respect to the distance $d_\alpha$.
 Assume that $X$ is a $\calBn$-valued random vector satisfying
 the SCP. Let $M_l=\EE[f(X)\,\vert\,\calF_l]-\EE[f(X)\,\vert\,\calF_{l-1}]$ for $l\in [n]$. Then for every $l\in [n]$,
 \begin{equation}\label{eq:increment-upper-bound-1}
 M_l^2 \preccurlyeq 4\alpha_l^2 I_d.
 \end{equation}
 \end{lemma}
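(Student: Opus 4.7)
The plan is to run the standard martingale-difference argument, using the SCP to couple two adjacent conditional distributions and exploit the monotonicity of $\alpha$ to convert a ``second flip at coordinate $j > l$'' into a bound involving $\alpha_l$.

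First I would write $M_l$ explicitly. Setting $g(y_{\le l}) := \EE[f(X)\,\vert\, X_{\le l} = y_{\le l}]$ and $p_l(X_{<l}) := \PP(X_l = 1\,\vert\, X_{<l})$, one has $\EE[f(X)\,\vert\, \calF_{l-1}] = p_l\, g(X_{<l}, 1) + (1-p_l)\, g(X_{<l}, 0)$, so
\[
M_l = (1 - \mathbf{1}_{\{X_l = 0\}} - p_l)\bigl(g(X_{<l},1) - g(X_{<l},0)\bigr),
\]
and in particular $M_l^2 \preccurlyeq \norm{g(X_{<l},1) - g(X_{<l},0)}^2 I_d$. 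It therefore suffices to prove $\norm{g(X_{<l},1) - g(X_{<l},0)} \le 2\alpha_l$ pointwise in $X_{<l}$.

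Next I would invoke the SCP with $S = \{1,\ldots,l\}$, $x_S = (X_{<l},1)$ and $y_S = (X_{<l},0)$: since $x_S \triangleright y_S$, there exists a coupling $(U,V)$ of $\mathcal{L}(X_{>l}\,\vert\, X_{\le l} = (X_{<l},0))$ and $\mathcal{L}(X_{>l}\,\vert\, X_{\le l} = (X_{<l},1))$ such that $U \triangleright V$. Writing $x^* := (X_{<l},1,V)$ and $y^* := (X_{<l},0,U)$, the vectors $x^*$ and $y^*$ differ only at coordinate $l$ and at most one coordinate $j > l$, so
\[
d_\alpha(x^*, y^*) \le \alpha_l + \alpha_j \le 2\alpha_l,
\]
where the last inequality uses that $\alpha$ is nonincreasing. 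By the matrix Lipschitz assumption, $\norm{f(x^*) - f(y^*)} \le 2\alpha_l$ almost surely.

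Finally I would combine these pieces. Because $g(X_{<l},1) - g(X_{<l},0) = \EE[f(x^*) - f(y^*)]$ under the coupling, convexity of the operator norm yields
\[
\norm{g(X_{<l},1) - g(X_{<l},0)} \le \EE \norm{f(x^*) - f(y^*)} \le 2\alpha_l,
\]
and \eqref{eq:increment-upper-bound-1} follows. The one place where a little care is needed is the verification that the SCP coupling really relates the two conditional laws with the correct direction of the covering relation (so that the extra disagreement lies in $S^c$ with weight $\alpha_j \le \alpha_l$); everything else is bookkeeping.
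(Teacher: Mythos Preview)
Your proof is correct and follows essentially the same approach as the paper: write the martingale increment as a bounded scalar times the difference of the two conditional expectations given $X_l=1$ versus $X_l=0$, invoke the SCP coupling on $S=[l]$ so that the two realizations differ only at coordinate $l$ and at most one coordinate $j>l$, and use the monotonicity of $\alpha$ together with the Lipschitz assumption and Jensen's inequality to obtain the bound $2\alpha_l$. The only cosmetic difference is that the paper keeps the factor $\PP(X_l\neq x_l\,\vert\,A^{x}_{l-1})$ explicit before bounding it by~$1$, whereas you absorb it directly via $|\mathbf{1}_{\{X_l=1\}}-p_l|\le 1$.
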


 \begin{lemma}\label{L:Ml_estimate-hom} In the setting of Lemma \ref{L:Ml_estimate} assume additionally that $X$ is $k$-homogeneous. For $l\in [2k]$ define
$N_l=\EE[f(X)\,\vert\,\calG_l]-\EE[f(X)\,\vert\,\calG_{l-1}]$. Then for $l \in  [k]$,
\begin{equation}\label{eq:increment-upper-bound-2}
N_l^2 \preccurlyeq 4\alpha_l^2 I_d,
\end{equation}
while for $l=k+1,\ldots,2k$,
 \begin{equation}\label{eq:increment-upper-bound-3}
 	N_l^2 \preccurlyeq
 	4\alpha_k^2 I_d.
 \end{equation}
 \end{lemma}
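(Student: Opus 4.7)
The case $l \in [k]$ of~\eqref{eq:increment-upper-bound-2} is immediate: by construction $\calG_l = \calF_l$ for such $l$, so $N_l = M_l$ and Lemma~\ref{L:Ml_estimate} applies directly. The substantive content of the lemma is therefore~\eqref{eq:increment-upper-bound-3}, which I now address.

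Fix $r \in [k]$ and set $S = \{1,\ldots,k\} \cup \{Y_1,\ldots,Y_{r-1}\}$, denoting by $x_S$ the values of $X$ on $S$ dictated by $\calG_{k+r-1}$. The first step is to identify the two conditional laws appearing in $N_{k+r}$. Given $X$, the tuple $(Y_1,\ldots,Y_{r-1})$ is uniform on ordered $(r-1)$-tuples of distinct elements of $\supp X \cap \{k+1,\ldots,n\}$, so its order conveys no information about $X$ beyond the set $T := \{Y_1,\ldots,Y_{r-1}\}$; hence $\mathcal{L}(X \mid \calG_{k+r-1}) = \mathcal{L}(X \mid X_S = x_S)$. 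After discarding the configurations of $\calG_{k+r-1}$ in which $X$ is already entirely revealed (these give $N_{k+r} = 0$ trivially), write $j = \sum_{i\le k} X_i$. A direct calculation using that $|\supp X \cap \{k+1,\ldots,n\}\setminus T| = k - j - (r-1)$ is $\calG_{k+r-1}$-measurable yields $\PP(Y_r = z \mid X,\calG_{k+r-1}) = \ind{X_z = 1}/(k-j-r+1)$ for every $z \in S^c \cap \{k+1,\ldots,n\}$, and consequently $\mathcal{L}(X \mid Y_r = z, \calG_{k+r-1}) = \mathcal{L}(X \mid X_z = 1, X_S = x_S)$ for each admissible $z$.

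Next I would apply Definition~\ref{d:scp} to $S' = S \cup \{z\}$ with the pair $(x_S, 1) \triangleright (x_S, 0)$, producing a coupling $(U,V)$ of $\mathcal{L}(X_{(S')^c} \mid X_{S'} = (x_S, 0))$ and $\mathcal{L}(X_{(S')^c} \mid X_{S'} = (x_S, 1))$ with $U \triangleright V$. By $k$-homogeneity these two laws are supported on vectors with $k-j-r+1$ and $k-j-r$ ones respectively, so necessarily $U = V + e_i$ for some $i \in (S')^c$. Since $S' \supset \{1,\ldots,k\}$, both $z$ and $i$ lie in $\{k+1,\ldots,n\}$, so the coupled samples $X^-, X^+$ of the two conditional laws of $X$ differ only at the two coordinates $z$ and $i$; the matrix Lipschitz assumption~\eqref{eq:matrix-Lipschitz} combined with the monotonicity of $\alpha$ then gives $\|f(X^+) - f(X^-)\| \le \alpha_z + \alpha_i \le 2\alpha_k$. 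Decomposing $\mathcal{L}(X \mid X_S = x_S) = p\,\mathcal{L}(X \mid X_z = 1, X_S = x_S) + (1-p)\,\mathcal{L}(X \mid X_z = 0, X_S = x_S)$ with $p = \PP(X_z = 1 \mid X_S = x_S)$ and taking expectations, I would conclude $\|\EE[f(X) \mid Y_r = z, \calG_{k+r-1}] - \EE[f(X) \mid \calG_{k+r-1}]\| \le (1-p) \cdot 2\alpha_k \le 2\alpha_k$ uniformly in admissible $z$, and hence $\|N_{k+r}\| \le 2\alpha_k$ almost surely, equivalently $N_{k+r}^2 \preccurlyeq 4\alpha_k^2 I_d$.

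The main obstacle is the right choice of auxiliary set for the SCP: including all of $\{1,\ldots,k\}$ in $S'$ is crucial so that the extra coordinate $i$ produced by the coupling falls in $\{k+1,\ldots,n\}$, which is what makes monotonicity of $\alpha$ collapse both contributions to $\alpha_k$ rather than producing an index-dependent bound.
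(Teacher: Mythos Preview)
Your proof is correct and follows essentially the same approach as the paper. Both arguments hinge on the identification $\mathcal{L}(X\mid \calG_{k+r-1})=\mathcal{L}(X\mid X_S=x_S)$ (the paper phrases this as $\EE[f(X)\mid A^{x,v}_l]=\EE[f(X)\mid B^{x,v}_l]$, established by peeling off the uniform choices of $Y_s$ one at a time), then apply the SCP at the set $S'=S\cup\{z\}$ to couple the two conditional laws, and finally use $k$-homogeneity together with $S'\supset[k]$ to force both differing coordinates into $\{k+1,\ldots,n\}$, where monotonicity of $\alpha$ gives the bound $2\alpha_k$. Your justification of the reduction step (``the order conveys no information'') is somewhat more informal than the paper's explicit computation of the conditional probabilities $\PP(Y_s=v_s\mid X,Y_1,\ldots,Y_{s-1})$, but the content is the same.
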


 We postpone for now the proof of the above lemmas and firstly show how they imply Theorems~\ref{T:d_alpha} and~\ref{T:d_alpha_matrix}.
To this end let us recall the matrix version  of the Azuma-Hoeffding inequality due to Tropp \cite[Theorem 7.1]{MR2946459}, which asserts that if $D_l$, $l=1,\ldots,n$ are $\calHd$-valued martingale differences and $D_l^2 \preccurlyeq C_l^2$ for some deterministic matrices $C_l\in \calHd$, then for all $t\ge 0$,
\begin{displaymath}
  \PP\Big(\lambda_{max}\Big(\sum_{l=1}^n D_l\Big)\ge t\Big) \le de^{-t^2/8\sigma^2},
\end{displaymath}
where $\sigma^2 = \|\sum_{l=1}^n C_l^2\|$.
Note also that for $d=1$ the classical Azuma-Hoeffding inequality (see, e.g., \cite[Theorem 5.8]{MR2547432}) allows to replace the constant $1/8$ by $1/2$.

 \begin{proof}[Proof of Theorems~\ref{T:d_alpha} and~\ref{T:d_alpha_matrix}]
 Since the SCP is invariant under permutations of coordinates of $X$, we may and do assume that $\alpha=\alpha^{\downarrow}$.
 By Lemma~\ref{L:Ml_estimate} the martingale differences $M_l$ satisfy $M_l^2 \preccurlyeq C_l^2 := 4\alpha_l^2 I_d$. Clearly
 \begin{equation}\label{eq:non-homogeneous}
	\Big\|\sum_{l=1}^n C_l^2 \Big\| = 4\abs{\alpha}^2.
 \end{equation}
 If $X$ is $k$-homogeneous, then by Lemma~\ref{L:Ml_estimate-hom}, $N_l^2 \preccurlyeq \widetilde{C}_l^2 := 4\alpha_{\min(l,k)}^2 I_d$. In this case
 \begin{equation}\label{eq:homogeneous}
	\Big\|\sum_{l=1}^{2k} \widetilde{C}_l^2\Big\| =
	4 \Big[
	\Big(\sum_{l=1}^k \alpha_l^2 \Big) + k\alpha_k^2
	\Big]
	\le
	8 \sum_{l=1}^k \alpha_l^2.
 \end{equation}

We have $f(X) = \sum_{l=1}^n M_l$, whereas in the $k$-homogeneous case $f(X) = \sum_{l=1}^{2k} N_l$ (observe that after $2k$-steps of the sampling procedure all the nonzero coordinates of $X$ are revealed and so $X$ is $\mathcal{G}_{2k}$-measurable). Thus the conclusion of Theorem~\ref{T:d_alpha} follows by applying estimates~\eqref{eq:non-homogeneous} and~\eqref{eq:homogeneous} for $d=1$ together with the classical Azuma-Hoeffding inequality. Similarly, Theorem~\ref{T:d_alpha_matrix} follows from the matrix version of the Azuma-Hoeffding inequality.
\end{proof}

It remains to prove Lemmas \ref{L:Ml_estimate} and \ref{L:Ml_estimate-hom}.

  \begin{proof}[Proof of Lemma~\ref{L:Ml_estimate}]
  Let $A^x_l=\{ X_1=x_1,\ldots,X_l=x_l \}$ for $x=(x_1,\ldots,x_n)\in\calBn$ and $l=0,\ldots,n$.
  Then, for $l = 1,\ldots,n$ and any $x\in\calBn$ such that $\PP(A_l^x) > 0$,
  \begin{multline*}
  	\EE[ f(X)\,\vert\, A^x_l ] -\EE [f(X)\,\vert\, A^x_{l-1}]
  	=
  	\EE[ f(X)\,\vert\, A^x_{l-1}, X_l=x_l] - \EE[f(X)\,\vert\, A^x_{l-1}]
  	\\=
  	\PP(X_l\neq x_l\,\vert\, A^x_{l-1})
  	\big(
  	\EE[f(X)\,\vert\, A^x_{l-1},X_l=x_l]
  	-
  	\EE[f(X)\,\vert\, A^x_{l-1},X_l\neq x_l]
  	\big).
  \end{multline*}
  If $\PP(X_l\neq x_l\,\vert\, A^x_{l-1})\neq 0$, then by the SCP there exist a coupling $(\hat{X},\hat{Y})$ between the distributions $\mathcal{L}(X\,\vert\,A^x_{l-1},X_l=x_l)$ and $\mathcal{L}(X\,\vert\,A^x_{l-1},X_l\neq x_l)$ that is supported on the set $\{\,(y,z)\in\mathcal{B}_n^2\,\colon\, d_H((y_i)_{i>l}, (z_i)_{i>l}) \le 1 \,\}$.
  Using this coupling, the Lipschitz property of $f$, Jensen's inequality  and the fact that $\alpha_i \le \alpha_l$ for any $i>l$, we get that
  \begin{multline*}
	\norm{\EE[ f(X)\,\vert\, A^x_l ] -\EE [f(X)\,\vert\, A^x_{l-1}]}
	\\\le
    \PP(X_l\neq x_l\,\vert\, A^x_{l-1}) \EE \| f((x_i)_{i\le l},\hat{X}_{i>l}) - f((x_i)_{i< l},1-x_l,\hat{Y}_{i>l})\| \\
	\le \PP(X_l\neq x_l\,\vert\, A^x_{l-1})
	\cdot
	2\alpha_l \le 2\alpha_l,
  \end{multline*}
  which is equivalent to \eqref{eq:increment-upper-bound-1}.
\end{proof}

\begin{proof}[Proof of Lemma \ref{L:Ml_estimate-hom}] Note that for $l\le k$, we have $\calG_l = \calF_l$. As a consequence $N_l = M_l$, where $M_l$ are martingale increments defined in Lemma \ref{L:Ml_estimate}, which implies \eqref{eq:increment-upper-bound-2}.

Consider now $l > k$ of the form $l=k+r$ and for $x = (x_1,\ldots,x_k) \in \mathcal{B}_k$ and $v = (v_1,\ldots, v_k) \in (\{0\}\cup\{k+1,\ldots,n\})^k$ set $A^{x,v}_l=\{ X_1=x_1,\ldots,X_k=x_k, Y_1 = v_1,\ldots,Y_r = v_r \}$.
Then $\calF_l$ is generated by the sets $A^{x,v}_l$. By the definition of the variables $Y_r$, we have $\{Y_r = i\} \subseteq \{X_i = 1\}$ and so for any $x,v$ such that $\PP(A^{x,v}_l)>0$,
\begin{align}\label{eq:conditional-probability-expanded}
  \EE [f(X)\,|\, A^{x,v}_l] = \frac{\EE [f(X)\ind{A^{x,v}_{l-1}}\indbr{X_{v_r} = 1}\indbr{Y_r=v_r}]}{\PP(A^{x,v}_{l-1},X_{v_r}=1, Y_r = v_r)}.
\end{align}
For $s \in [r]$ let $m_s = |\{i\in [k]\colon x_i= 1\}| + |\{j \in [s-1] \colon v_j \neq 0\}|$ be the number of ones sampled by the time $k+s-1$.
It follows from \eqref{eq:definition-of-Y} that if $m_s < k$ then  $\PP( A^{x,v}_{k+s}) > 0$ implies that $v_s \neq 0$ and $\PP(Y_s = v_s|X,Y_1,\ldots,Y_{s-1}) = \frac{1}{k-m_s}$ on $A^{x,v}_{k+s-1}\cap\{X_{v_s} = 1\}$,  whereas if $m_s=k$, then $\PP( A^{x,v}_{k+s}) > 0$ implies that $v_s = 0$ and $\PP(Y_s = v_s\,|\,X, Y_1,\ldots,Y_{s-1}) = 1$ on $A^{x,v}_{k+s-1}\cap\{X_{v_s} = 1\} = A^{x,v}_{k+s-1}$.
Going back to \eqref{eq:conditional-probability-expanded} and using this observation for $s=r,\ldots,1$, we obtain that
\begin{displaymath}
  \EE [f(X)\,|\, A^{x,v}_l] = \EE[ f(X)\,|\,B^{x,v}_l],
\end{displaymath}
where $B^{x,v}_l = \{X_1=x_1,\ldots,X_k = x_k, X_{v_1}=\ldots = X_{v_{l-k}} = 1\}$. We thus obtain
\begin{multline*}
  	\EE[ f(X)\,\vert\, A^{x,v}_l ] -\EE [f(X)\,\vert\, A^{x,v}_{l-1}] \\
  =  \PP(X_{v_r} \neq 1\,|\,B^{x,v}_{l-1})(\EE[ f(X)\,|\,B^{x,v}_{l-1}, X_{v_r} = 1] - \EE[ f(X)\,|\,B^{x,v}_{l-1}, X_{v_r} \neq 1]).
\end{multline*}
Note that the right-hand side may be non-zero only if $v_r\neq 0$. In this case
using the inequality $\alpha_{v_s} \le \alpha_k$ for $s \in [k]$ we can conclude as in the proof of Lemma~\ref{L:Ml_estimate}.

\end{proof}

Let us now pass to the proof of Theorem \ref{T:weak-martingale}.

\begin{proof}[Proof of Theorem \ref{T:weak-martingale}]

We will rely on the martingale used in the article by Pemantle and Peres.
Let $X$ be a random vector with law $\pi$ and define the random variables $Y_l$ for $l \le n$ as
\begin{gather}\label{eq:definition-of-Y-2}
\mathcal{L}( Y_{1}\,\vert\,X ) = \Unif(\supp X)
 \quad\text{and}\\
 \mathcal{L}(Y_{l}\,\vert\,X,Y_{1},\ldots,Y_{l-1}) =  \Unif(\supp X \setminus \{Y_{1},\ldots, Y_{l-1}\}),
 \quad\text{for}\quad l=2,\ldots,k,\nonumber
\end{gather}
i.e., $Y_l's$ reveal in a uniformly random order the elements of $\supp X$.
Let $\mathcal{H}_0=\{\emptyset,\Omega\}$ and $\mathcal{H}_l = \sigma(Y_1,\ldots,Y_l)$ for $l=1,\ldots,k$.
Then $f(X) - \EE f(X)= \sum_{l=1}^k \EE[f(X)\,|\,\mathcal{H}_l] - \EE[f(X)\,|\,\mathcal{H}_{l-1}] =: \sum_{l=1}^k D_l$. We will use the matrix version of Freedman's inequality due to Tropp \cite{MR2802042}, which asserts (in a version specialized for our application) that if $\|D_l\|\le a$ a.s. for all $l$, and $\|\sum_{l=1}^k  \EE[ D_l^2\,|\,\mathcal{H}_{l-1}]\| \le \sigma^2$ a.s., then for any $t \ge 0$,
\begin{equation}\label{eq:Freedman}
  \PP(\|f(X) - \EE f(X)\| \ge t) \le 2d\exp\Big(-\frac{t^2}{2 \sigma^2 + 2at/3}\Big).
\end{equation}

Consider thus a sequence of pairwise distinct $v_1,\ldots,v_k \in [n]$ and denote $A^v_l = \{Y_1 = v_1,\ldots,Y_l = v_l\}$. Similarly as in the proof of Lemma \ref{L:Ml_estimate-hom}, if $\PP(A^v_l) > 0$, then we have
\begin{displaymath}
  \EE[f(X)\,|\,A^v_l] = \EE[f(X)\,|\,B^v_l],
\end{displaymath}
where $B_l^v = \{X_{v_1}= \ldots=X_{v_l} = 1\}$. Therefore we have
\begin{equation}\label{eq:increment}
  D_l\ind{A^v_l} =
  \PP(X_{v_{l}}=0\,|\,B^v_{l-1})
  \big(
  \EE[f(X)\,|\, B^v_{l-1},X_{v_l}=1] -
  \EE[f(X)|B^v_{l-1},X_{v_l}=0]
  \big)\ind{A^v_l}.
\end{equation}

Since the SRP implies the SCP, there exists a coupling $(\tilde{Z},\hat{Z})$ between the distributions $\mathcal{L}(X\,|\,B^v_l)$ and $\mathcal{L}(X\,|\,B^v_{l-1},X_{v_l}=0)$ such that $\tilde{Z}$ and $\hat{Z}$ differ just at coordinate $v_l$ and one additional coordinate (at which by $k$-homogeneity $\hat{Z}$ necessarily takes the value one). Let $\tilde{Y}_l$ be this coordinate.
We have
\begin{align}\label{eq:increment-1}
\EE [f(X)\,|\, B^v_{l-1},X_{v_l}=1] -
\EE [f(X)\,|\, B^v_{l-1},X_{v_l}=0] =
\EE [f(\tilde{Z}) - f(\hat{Z})],
\end{align}

Since $\hat{Z}^{\tilde{Y}_l}=\tilde{Z}^{v_l}$, we have
\begin{multline}\label{eq:long-formula}
  \Big(
  \EE[f(X)\,|\, B^v_{l-1},X_{v_l}=1] -
  \EE[f(X)\,|\,B^v_{l-1},X_{v_l}=0]
  \Big)^2
  =
  \Big(
  \EE [f(\tilde{Z}) - f(\hat{Z})]
  \Big)^2 \\
  \preccurlyeq
  \EE \Big[\big(
  f(\tilde{Z}) - f(\hat{Z})\big)^2\Big]
   =
  \EE \Big[\big(
  f(\tilde{Z}) - f(\tilde{Z}^{v_l}) +
  f(\hat{Z}^{\tilde{Y}_l}) - f(\hat{Z})
  \big)^2\Big] \\
   \preccurlyeq
   2 \EE \Big[ \big(f(\tilde{Z}) - f(\tilde{Z}^{v_l})\big)^2\Big] +
   2 \EE \Big[\big(f(\hat{Z}^{\tilde{Y}_l}) - f(\hat{Z})\big)^2\Big]
   \preccurlyeq 2 C_{v_l}^2 + 2 \EE C_{\tilde{Y}_l}^2,
\end{multline}
where in the first and second inequality we used the operator convexity of the function $x\mapsto x^2$ (see \cite[Example V.1.3]{MR1477662}), and in the last inequality the assumption \eqref{eq:matrix-bounded difference-assumption}.

In particular, using \eqref{eq:increment}, we obtain $\|D_l^2\|\le 4\max_i\| C_i^2\|$, so $\|D_l\| \le  2 K$.
Moreover as on $A^v_l$ we have $Y_l = v_l$, by \eqref{eq:increment} and \eqref{eq:long-formula} we get that
\begin{align*}
  D_l^2\ind{A^v_l} \preccurlyeq  2( C_{Y_l}^2 + \EE C_{\tilde{Y}_l}^2)\PP(  X_{v_l}=0\,|\,B^v_{l-1})^2\ind{A^v_l}.
\end{align*}

Let us now slightly change our notation and think of $\tilde{Y}_l$ as of random variable defined on the same probability space as $X$, with conditional distribution with respect to the $\sigma$-field $\mathcal{H}_l$ given on each of its atoms $A^v_l$ by the above construction, using the corresponding coupling (which depends on $v_1,\ldots,v_l$). Then the above inequality can be written as
\begin{align}\label{eq:increment-squared}
  D_l^2 \preccurlyeq  2\sum_{v_l \in [n]\setminus \{ v_1,\ldots,v_{l-1}\}} \big( C_{Y_l}^2 + \EE [ C_{\tilde{Y}_l}^2\,|\, A^v_{l} ] \big)\PP(  X_{v_l}=0\,|\,B^v_{l-1})^2\ind{A^v_l}.
\end{align}
Let us now go back to the equations \eqref{eq:increment} and \eqref{eq:increment-1} and let us apply them in the special case of the function $\tilde{f}(x) = \sum_{i=1}^n x_i C_i^2$, denoting the corresponding martingale increment by $\tilde{D}_l$. We obtain that
\begin{displaymath}
  \tilde{D}_l\ind{A^v_l} =  \PP(X_{v_{l}}=0\,|\,B^v_{l-1}) \big(C^2_{Y_l} - \EE[C^2_{\tilde{Y}_l}\,|\,A^v_l]\big)\ind{A^v_l}.
\end{displaymath}
Thus we get that
\begin{displaymath}
  0 = \EE [\tilde{D}_l\,|\,A^v_{l-1}]
  =
  \sum_{v_l\in [n]\setminus \{v_1,\ldots,v_{l-1}\}}
  \EE \Big[\PP(X_{v_{l}}=0\,|\,B^v_{l-1})\ind{A^v_l}
  \big(
  C^2_{Y_l} - \EE[C^2_{\tilde{Y}_l}\,|\,A^v_l]\big)
  \,\Big|\,A^v_{l-1}\Big],
\end{displaymath}
i.e.,
\begin{multline*}
  \sum_{v_l\in [n]\setminus \{v_1,\ldots,v_{l-1}\}}
  \EE \Big[\PP(X_{v_{l}}=0\,|\,B^v_{l-1})\ind{A^v_l} C^2_{Y_l}\,\Big|\,A^v_{l-1}\Big]\\
  = \sum_{v_l\in [n]\setminus \{v_1,\ldots,v_{l-1}\}}
  \EE \Big[\PP(X_{v_{l}}=0\,|\,B^v_{l-1})\ind{A^v_l} \EE (C^2_{\tilde{Y}_l}\,|\,A^v_l)\,\Big|\,A^v_{l-1}\Big],
\end{multline*}
which combined with the estimate \eqref{eq:increment-squared} on $D_l^2$ (replacing $\PP(  X_{v_l}=0\,|\,B^v_{l-1})^2$ by $\PP(  X_{v_l}=0\,|\,B^v_{l-1})$) gives
\begin{align*}
\EE [D_l^2\,|\,A^v_{l-1}]
&\preccurlyeq
2
\sum_{v_l \in [n]\setminus \{ v_1,\ldots,v_{l-1}\}}
\EE \Big[ (C_{Y_l}^2 + \EE [C_{\tilde{Y}_l}^2\,|\,A^v_l]) \PP(  X_{v_l}=0\,|\,B^v_{l-1})\ind{A^v_l}\,\Big|\,A^v_{l-1}\Big] \\
& =
4 \sum_{v_l \in [n]\setminus \{ v_1,\ldots,v_{l-1}\}}
\EE\Big[ C_{Y_l}^2\PP(  X_{v_l}=0\,|\,B^v_{l-1})\ind{A^v_l}  \,\Big|\,A^v_{l-1}\Big]\\
& \preccurlyeq  4\sum_{v_l \in [n]\setminus \{v_1,\ldots,v_{l-1}\}} C_{v_l}^2  \PP(A^v_l\,|\,A^v_{l-1}) \\
& = 4\sum_{v_l \in [n]\setminus \{v_1,\ldots,v_{l-1}\}} C_{v_l}^2 \frac{1}{k-l+1} \PP(X_{v_l} = 1\,|\,B^v_{l-1})\\
& \preccurlyeq 4\sum_{v_l \in [n]\setminus \{v_1,\ldots,v_{l-1}\}} C_{v_l}^2 \frac{1}{k-l+1} \PP(X_{v_l} = 1),
\end{align*}
where in the last inequality we used \cite[Lemma 1.10]{MR3899605}, which asserts that $\PP(X_{v_l} = 1) \ge \PP(X_{v_l} = 1|B^v_{l-1})$ (we remark that this is the only place in the proof in which we use the full strength of the strong Rayleigh property).

Extending the summation to $[n]$, we thus obtain
\begin{displaymath}
  \EE [D_l^2\,|\,\mathcal{H}_{l-1}] \preccurlyeq 4\sum_{v=1}^n C_v^2 \PP(X_v=1) \frac{1}{k-l+1},
\end{displaymath}
whence
\begin{displaymath}
  \sum_{l=1}^{k} \EE[D_l^2|\mathcal{H}_{l-1}]
  \preccurlyeq 4 \sum_{v=1}^n C_v^2 \PP(X_v=1) \log (ek)
  \preccurlyeq 4\log (ek) \cdot \EE\Big[ \sum_{v=1}^n X_v C_v^2\Big].
\end{displaymath}
Combining this with the already obtained bound on $\|D_l\|$ allows us to apply \eqref{eq:Freedman} with $a = 2K$ and $\sigma^2 = 4\|\EE \sum_{v=1}^n X_v C_v^2\|\log (ek)$, which ends the proof of the theorem.
\end{proof}

\section{Proofs of the results of Section~\ref{S:abstract_formulations}}\label{S:abstract_proofs}

\subsection{Propositions~\ref{P:abstract_dalpha} and~\ref{P:abstract_matrix}}
The main idea behind the proof of Proposition~\ref{P:abstract_dalpha} is to find an estimate on $\norm{\Gamma_+(f)}_\infty$ in terms of $\alpha$, refining~\eqref{eq:vf-dh-estimate}, and then to use the Herbst argument.
We will need the following lemma which we state in the matrix setting as it will be useful for the proof of Proposition~\ref{P:abstract_matrix} as well.

\begin{lemma}\label{L:analytical_mtx}
Let $t=(t_1,\ldots,t_n)$ be a sequence of nonnegative numbers and let $D_1,\ldots,D_n\in\calHd$ be positive semidefinite matrices.
Then for any $T_1\ge \abs{t}_1$ and $T_\infty\ge \abs{t}_\infty$
\begin{align}\label{eq:supremum-KM}
	\big\|
	\sum_{i=1}^n t_iD_i
	\big\|
	\le
	T_\infty\cdot
	\sup\Big\{	\,
	\big\|	
	\sum_{i\in\mathcal{I}} D_i
	\big\|	
	\colon
	\mathcal{I}\subset [n],\,
	\abs{\mathcal{I}}\le \ceil{T_1 / T_\infty}
	\,\Big\}.
\end{align}

\end{lemma}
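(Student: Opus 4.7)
The plan is to reduce the inequality to a convex combination argument. Set $s_i := t_i/T_\infty$, so that $s \in [0,1]^n$ and $|s|_1 = |t|_1/T_\infty \le T_1/T_\infty \le k$, where $k := \lceil T_1/T_\infty\rceil$ is a nonnegative integer. After dividing both sides by $T_\infty$, the claim becomes
\[
\Big\|\sum_{i=1}^n s_i D_i\Big\| \le \sup\Big\{\,\big\|\textstyle\sum_{i\in\mathcal{I}}D_i\big\|\,:\,\mathcal{I}\subset[n],\ |\mathcal{I}|\le k\,\Big\}.
\]

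The first step is to establish the polytopal identity
\[
\mathcal{P}_k := \{x\in[0,1]^n\,:\,|x|_1\le k\} = \conv\{\,\ind{\mathcal{I}}\,:\,\mathcal{I}\subset[n],\ |\mathcal{I}|\le k\,\},
\]
where $\ind{\mathcal{I}}\in\{0,1\}^n$ is the indicator vector of $\mathcal{I}$. The inclusion $\supseteq$ is immediate. For the converse I would use the standard LP argument that the vertices of $\mathcal{P}_k$ are $0/1$ vectors: at any vertex of $\mathcal{P}_k$, $n$ of the defining inequalities $x_i\ge 0$, $x_i\le 1$, $\sum_i x_i\le k$ are tight. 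If $\sum_i x_i<k$ then all $n$ tight constraints are of the first two types, so $x\in\{0,1\}^n$; if $\sum_i x_i=k$ then $n-1$ coordinates are forced to $0$ or $1$ and the remaining coordinate is determined by $\sum_i x_i=k$, hence is also integral since $k\in\mathbb{N}$.

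With this representation in hand, write $s=\sum_j \lambda_j \ind{\mathcal{I}_j}$ as a convex combination with $|\mathcal{I}_j|\le k$ for every $j$. Then
\[
\sum_{i=1}^n s_i D_i = \sum_j \lambda_j \sum_{i\in\mathcal{I}_j} D_i,
\]
and since each $\sum_{i\in\mathcal{I}_j}D_i$ is positive semidefinite, the triangle inequality for the operator norm gives
\[
\Big\|\sum_{i=1}^n s_i D_i\Big\|
\le \sum_j \lambda_j \Big\|\sum_{i\in\mathcal{I}_j}D_i\Big\|
\le \sup_{|\mathcal{I}|\le k}\Big\|\sum_{i\in\mathcal{I}}D_i\Big\|,
\]
which, upon multiplying back by $T_\infty$, yields \eqref{eq:supremum-KM}.

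The only nontrivial point is the polytopal identity; the rest is linearity and the triangle inequality. I do not expect any real obstacle, since the description of $\mathcal{P}_k$ as the convex hull of indicator vectors of subsets of size at most $k$ is a standard fact about capped uniform matroid polytopes, and the PSD assumption on the $D_i$'s is used only to ensure that the summed norms can be controlled by a single supremum over subsets (rather than by an $\ell_1$-type quantity, which would be wasteful).
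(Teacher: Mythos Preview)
Your proof is correct and follows essentially the same route as the paper: both normalize so that $T_\infty=1$, identify the polytope $\{x\in[0,1]^n:\sum_i x_i\le k\}$ as the convex hull of the indicator vectors $\ind{\mathcal I}$ with $|\mathcal I|\le k$, and conclude by convexity of the norm. The paper verifies the polytopal identity by exhibiting explicit perturbations $x\pm\varepsilon e_{i_0}$ (or $x\pm\varepsilon(e_{i_0}-e_{i_1})$ on the face $\sum_i x_i=k$) to rule out non-$\{0,1\}$ extreme points, while you use the equivalent LP argument counting tight constraints; these are two phrasings of the same fact. One small remark: your aside that the PSD assumption on the $D_i$ is what allows the final step is not quite accurate---the triangle inequality $\|\sum_j\lambda_j A_j\|\le\sum_j\lambda_j\|A_j\|$ holds for arbitrary Hermitian $A_j$, so neither your argument nor the paper's actually uses positive semidefiniteness.
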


\begin{proof}
By homogeneity we may assume without loss of generality that $T_\infty = 1$. We may also assume that $T_1$ is a positive integer. Let
\begin{displaymath}
  \mathcal{X} = \Big\{\,x \in [0,1]^n\colon \sum_{i=1}^n x_i \le T_1\,\Big\}, \; \mathcal{Y} = \Big\{\,y \in \{0,1\}^n\colon \sum_{i=1}^n y_i \le T_1\,\Big\}.
\end{displaymath}
Since the right-hand side of \eqref{eq:supremum-KM} equals to $\max\{\|\sum_{i=1}^n y_i D_i\|\colon y \in \mathcal{Y}\}$, whereas the left-hand side is a convex function of $t$, the lemma will follow once we prove that $\mathcal{X} \subset \conv \mathcal{Y}$. To this end, by the Krein--Milman theorem, it is enough to show that $\mathcal{Y}$ is the set of all extreme points of the closed convex set $\mathcal{X}$. Consider any $x \in \mathcal{X}\setminus \mathcal{Y}$.
Let $i_0 \in [n]$ be such that $x_{i_0} \in (0,1)$.
If $\sum_i x_i < T_1$ then for $\varepsilon$ sufficiently close to zero, $x+\varepsilon e_{i_0}, x - \varepsilon e_{i_0} \in \mathcal{X}$ and so $x = \frac{1}{2}(x+\varepsilon e_{i_0}) + \frac{1}{2}(x-\varepsilon e_{i_0})$ is not an extreme point of $\mathcal{X}$.
If  $\sum_i x_i = T_1$, then since $T_1$ is an integer, there exists ${i_1}\neq {i_0}$ such that $x_{i_1} \in (0,1)$.
Then $x = \frac{1}{2}u + \frac{1}{2}v$, where $u = x + \varepsilon e_{i_0} - \varepsilon e_{i_1}$, $v = x - \varepsilon e_{i_0} + \varepsilon e_{i_1}$.
For $\varepsilon$ close to zero $u,v \in \mathcal{X}$, thus again, $x$ is not an extreme point.
\end{proof}

\begin{proof}[Proof of Proposition~\ref{P:abstract_dalpha}]
We recall that for $x\in\calBn$ and $i,j\in [n]$, $x^{i}$ and $x^{ij}$ denote the vectors obtained from $x$ by flipping the $i$-th and swapping the $i$-th and $j$-th coordinates respectively.
For any $x\in\calBn$, using the definition~\eqref{eq:Gamma+def} of $\Gamma_+$, Lipschitz property of $f$ and inequality $(a+b)^2\le 2(a^2+b^2)$ we get
\begin{align}
\begin{split}\label{eq:another-long-formula}
 \Gamma_+(f)(x)
 &=
 \sum_{i=1}^{n}
 (f(x)-f(x^{i}))_+^2 L(x,x^{i})
 +
 \frac{1}{2}
 \sum_{i,j=1}^{n}
 (f(x)-f(x^{ij}))_+^2 L(x,x^{ij})
 \\
 &\le
 \sum_{i=1}^{n} \alpha_i^2L(x,x^{i})
 +
 \frac{1}{2}
 \sum_{i,j=1}^{n} (\alpha_i+\alpha_j)^2L(x,x^{ij})\indbr{x\neq x^{ij}} \\
 &\le
 \sum_{i=1}^{n} \alpha_i^2L(x,x^{i})
 +
 2\sum_{i=1}^{n}\alpha_i^2\sum_{j=1}^{n}L(x,x^{ij})\indbr{x\neq x^{ij}}
 \le
 2\sum_{i=1}^{n}\alpha_i^2\sum_{y\colon y_i\neq x_i} L(x,y).
 \end{split}
\end{align}
Therefore, by the stability condition~\eqref{eq:stability-condition} we estimate $\norm{\Gamma_+(f)}_{\infty} \le 2R\rho(L) \abs{\alpha}^2$.
Herbst's argument~\eqref{eq:Herbsst_arg} allows to conclude the first part.

The second part of the proposition follows by observing that for a flip-swap random walk
\[
 \sum_{i=1}^n \sum_{y\colon y_i\neq x_i}L(x,y) \le 2\cdot \Delta(L)
\]
so by~\eqref{eq:another-long-formula}, Lemma~\ref{L:analytical_mtx}  applied in the scalar setting $d=1$ with $t_i = 2\sum_{y\colon y_i\neq x_i}L(x,y)$, $D_i=\alpha_i^2$, $T_1=4\Delta(L)$ and $T_{\infty}=4R\rho(L)$ we can estimate
\[
 \norm{\Gamma_+(f)}_{\infty}
 \le
 4R\rho(L) \sum^{\ceil{\Delta(L)/R\rho(L)}}_{i=1}(\alpha_i^\downarrow)^{2}
\]
and conclude again in virtue of Herbst's argument~\eqref{eq:Herbsst_arg}.
\end{proof}

The proof of Proposition~\ref{P:abstract_matrix} follows along similar lines to the proof of Proposition~\ref{P:abstract_dalpha}, the difference being that in the end, instead of Herbst's argument, we apply the concentration result of Aoun et al.~\cite{MR4108222}, which  asserts that if $L$ satisfies the matrix Poincar\'{e} inequality with constant $C_P>0$
\begin{equation}\label{eq:Poincare}
 \operatorname{Var}(f)
 \preccurlyeq
 - C_P \pi( f Lf)
 \quad
 \forall\, f\colon\calBn\to\calHd,
\end{equation}
(where $L$ acts on the matrix-valued function $f$ elementwise and $fLf$ is the matrix product), then it satisfies the exponential concentration bound of the form
\begin{equation}\label{eq:matrix_poincare_concentration}
\pi\big(
\lambda_{max}(f-\pi(f)) > t
\big)
\le
d\exp\Big(
\frac{-t^2}{2C_P v_f + t\sqrt{2C_P v_f}}
\Big),
\end{equation}
where $v_f=\sup_x\norm{\Gamma(f)(x)}$ (where $\Gamma$ is defined via \eqref{eq:Gamma_def}, again with matrix multiplication, and $\|\cdot\|$ stands for the operator norm).
Note that for $d=1$, \eqref{eq:Poincare} is just the usual scalar Poincar\'e inequality.

\begin{proof}[Proof of Proposition~\ref{P:abstract_matrix}]
	For any $x\in\calBn$ and $i,j\in[n]$, using operator convexity of the function $x\mapsto x^2$ (see \cite[Example V.1.3]{MR1477662}) we get that
	\begin{multline}\label{eq:operator_conv_triangle}
	\big( f(x) - f(x^{ij}) \big)^2
	=
	\big[ \big(f(x)-f(x^i)\big) + \big(f(x^i)- f(x^{ij})\big) \big]^2
	\\
	\preccurlyeq
	2\big(f(x)-f(x^i)\big)^2+
	2\big(f(x^i)-f(x^{ij})\big)^2.
	\end{multline}
	Therefore, by the definition~\eqref{eq:Gamma_def} of $\Gamma$, by the assumed Lipschitz property~\eqref{eq:abstract_matrix_f_condition} of $f$ and by~\eqref{eq:operator_conv_triangle}, for any $x\in\calBn$,
	\begin{align}\label{eq:abstract_matrix}
	\begin{split}
	 \Gamma (f)(x)
	 &=
	 \frac{1}{2}\sum_{i=1}^{n}
 	 (f(x)-f(x^{i}))^2 L(x,x^{i})
 	 +
 	 \frac{1}{4}
 	 \sum_{i,j=1}^{n}
 	 (f(x)-f(x^{ij}))^2 L(x,x^{ij})	
 	 \\
	 &\preccurlyeq
	 \frac{1}{2}
	 \sum_{i=1}^{n} C_i^2L(x,x^i)
	 +
	 \frac{1}{2}
     \sum_{i,j=1}^{n} (C_i^2+C_j^2)L(x,x^{ij})\indbr{x\neq x^{ij}}
     \\
     &\preccurlyeq
     \sum_{i=1}^{n} C_i^2\cdot
	 \big[ \sum_{y\colon y_i\neq x_i}L(x,y) \big].
	 \end{split}
	\end{align}
	As both hand sides of~\eqref{eq:abstract_matrix} are positive semidefinite, their norms compare as well.
	Therefore, as in the proof of Proposition~\ref{P:abstract_dalpha}, by Lemma~\ref{L:analytical_mtx} with $t_i=\sum_{y\colon y_i\neq x_i}L(x,y)$, $T_1 = 2\Delta(L)$, $T_{\infty} = 2R\rho(L)$ and $D_i=C_i^2$
	\[
	\sup_{x\in\calBn}
	\norm{\Gamma(f)(x)}
	\le 	
	2R\rho(L)\cdot
	\sup\Big\{	
	\big\|	
	\sum_{i\in\mathcal{I}} C_i^2
	\big\|	
	\colon
	\mathcal{I}\subset [n],\,
	\abs{\mathcal{I}}\le \ceil{\Delta(L) / R\rho(L)}
	\Big\}.
	\]
	Since $L$ satisfies the (scalar) modified log-Sobolev inequality~\eqref{eq:mLSI}, then it satisfies the (scalar) Poincar\'{e} inequality with constant $C_P=2/\rho(L)$ (see, e.g., \cite[p. 292]{MR2283379}, noting slightly different definitions of constants in functional inequalities used therein) and whence by~\cite[Proposition 2.2]{MR4216521} or~\cite[Theorem 1.1]{MR4245747} it satisfies the matrix Poincar\'{e} inequality \eqref{eq:Poincare} with the same constant, which yields the conclusion in virtue of~\eqref{eq:matrix_poincare_concentration}.
\end{proof}
\subsection{Proposition~\ref{P:abstract_Talagrand}}
The proof of Proposition~\ref{P:abstract_Talagrand} is based on the idea introduced by Boucheron et al.~\cite{MR2540852} and then developed by Paulin~\cite{MR3248197}.
We follow the exposition from the works of Sambale and Sinulis~\cite{sambale2019modified,sambale2020concentration}.
We start with the following lemmas.
\begin{lemma}\label{L:Gamma_+bound}
For any flip-swap random walk with generator $L$ satisfying the stability condition~\eqref{eq:stability-condition} and for any $A\subset\calBn$
\begin{equation}\label{eq:L_Gamma_+bound1}
	\Gamma_+(d_T^2(\cdot, A))(x) \le 8R\rho(L)\cdot d_T^2(x,A).
\end{equation}
Moreover, for any $x,y\in\calBn$ and any set $A\subset \calBn$,
\begin{equation}\label{eq:L_Gamma_+bound2}
	d_T^2(x,A) - d_T^2(y,A) \le d_H(x,y).
\end{equation}
\end{lemma}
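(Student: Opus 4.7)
The plan is to prove both bounds by combining Talagrand's minimax representation of the convex distance with a Cauchy--Schwarz step that exploits the fact that flip--swap transitions change at most two coordinates.

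First, I would apply Sion's minimax theorem on the compact convex sets $\{\alpha\in\RR_+^n\colon\abs{\alpha}\le 1\}$ and $\mathcal{P}(A)$ (the simplex of probability measures on $A$) to the bilinear functional $(\alpha,\nu)\mapsto\sum_{z\in A}\nu(z)d_\alpha(x,z)$, obtaining
\begin{equation*}
d_T(x,A)=\min_{\nu\in\mathcal{P}(A)}\Big(\sum_{j=1}^n p_j(x,\nu)^2\Big)^{1/2},\qquad p_j(x,\nu):=\PP_\nu(z_j\neq x_j).
\end{equation*}
The elementary observation driving the argument is that $p_j(x,\nu)=p_j(y,\nu)$ when $x_j=y_j$, while $p_j(x,\nu)=1-p_j(y,\nu)$ when $x_j\neq y_j$.

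Bound \eqref{eq:L_Gamma_+bound2} would follow by choosing $\nu^*$ optimal for $y$: since $d_T^2(x,A)\le\sum_j p_j(x,\nu^*)^2$ and $d_T^2(y,A)=\sum_j p_j(y,\nu^*)^2$, the identity above gives $d_T^2(x,A)-d_T^2(y,A)\le\sum_{j\colon x_j\neq y_j}(1-2p_j(y,\nu^*))\le d_H(x,y)$. For \eqref{eq:L_Gamma_+bound1} I would instead choose $\nu^*$ optimal for $x$; the same identity yields $(d_T^2(x,A)-d_T^2(y,A))_+\le 2\sum_{j\colon x_j\neq y_j}p_j(x,\nu^*)$. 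Squaring, applying Cauchy--Schwarz, and using $d_H(x,y)\le 2$ for every flip--swap transition gives
\begin{equation*}
\bigl((d_T^2(x,A)-d_T^2(y,A))_+\bigr)^2\le 4\,d_H(x,y)\sum_{j\colon x_j\neq y_j}p_j(x,\nu^*)^2\le 8\sum_{j=1}^n p_j(x,\nu^*)^2\ind{y_j\neq x_j}.
\end{equation*}
Multiplying by $L(x,y)$, summing over $y$, and interchanging the order of summation turns the right-hand side into $8\sum_j p_j(x,\nu^*)^2\sum_{y\colon y_j\neq x_j}L(x,y)$; the stability condition \eqref{eq:stability-condition} bounds each inner sum by $R\rho(L)$, and by optimality of $\nu^*$ the remaining sum equals $d_T^2(x,A)$, producing the required $8R\rho(L)\,d_T^2(x,A)$.

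The main obstacle is obtaining a constant independent of $n$. A direct argument via $(a^2-b^2)=(a-b)(a+b)$ combined with the $\sqrt{d_H}$-Lipschitz property of $d_T(\cdot,A)$ gives only $4d_T^2(x,A)d_H(x,y)$, and after summation against $L(x,y)$ this produces a factor $\sum_y d_H(x,y)L(x,y)$ that may be of order $nR\rho(L)$. Unpacking $d_T^2$ via the minimax representation as $\sum_j p_j^2$ \emph{before} summing is precisely what allows the sum over coordinates to commute with the sum over $y$, so that stability is applied coordinate by coordinate rather than globally.
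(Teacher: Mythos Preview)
Your overall plan is sound and leads to the same bound as the paper, but there is a genuine gap in the justification of the key inequality for \eqref{eq:L_Gamma_+bound1}. You claim that with $\nu^*$ optimal for $x$, ``the same identity'' gives $(d_T^2(x,A)-d_T^2(y,A))_+\le 2\sum_{j:x_j\neq y_j}p_j(x,\nu^*)$. It does not: with $\nu^*$ optimal for $x$ one only has $d_T^2(y,A)\le \sum_j p_j(y,\nu^*)^2$, so the identity $p_j(x,\nu^*)^2-p_j(y,\nu^*)^2=2p_j(x,\nu^*)-1$ on $\{j:x_j\neq y_j\}$ yields a \emph{lower} bound on $d_T^2(x,A)-d_T^2(y,A)$, not an upper bound. (Had you instead taken $\nu^*$ optimal for $y$ the direction would be right, but then $\nu^*$ would depend on $y$, breaking both the sum-swap and the final identification $\sum_j p_j(x,\nu^*)^2=d_T^2(x,A)$.)

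The inequality you wrote down is nonetheless true; the fix is to use the other half of the saddle point. Set $\alpha^*_j=p_j(x,\nu^*)/d_T(x,A)$. The saddle-point property gives $\sum_j\alpha^*_j p_j(x,\nu)\ge d_T(x,A)$ for every $\nu$, while $d_T(y,A)\ge \inf_\nu \sum_j\alpha^*_j p_j(y,\nu)$. Taking $\nu_y$ achieving this infimum,
\[
d_T(x,A)-d_T(y,A)\le \sum_j\alpha^*_j\big(p_j(x,\nu_y)-p_j(y,\nu_y)\big)\le \sum_{j:x_j\neq y_j}\alpha^*_j,
\]
and multiplying by $d_T(x,A)+d_T(y,A)\le 2d_T(x,A)$ gives exactly your claimed bound. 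This is precisely the paper's route: it first proves $\Gamma_+(d_T(\cdot,A))(x)\le 2R\rho(L)$ via the $\alpha^*$-bound above, and then uses $(a^2-b^2)_+^2\le 4a^2(a-b)_+^2$ to pass to $d_T^2$. So your final paragraph is slightly off: the $(a^2-b^2)$ factorization \emph{does} avoid the factor $n$, provided one pairs it with the coordinate-wise estimate $\sum_j\alpha^*_j\ind{x_j\neq y_j}$ rather than the crude $\sqrt{d_H}$-Lipschitz bound. Once the key inequality is properly justified, your Cauchy--Schwarz, sum-swap and stability steps are correct and recover the same constant $8R\rho(L)$.
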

\begin{proof}
For $x\in\calBn$, $\alpha \in \RR^n$ and a probability measure $\mu$ on $\calBn$, let $h_x(\mu,\alpha) = \sum_{i}\alpha_i \mu(z\colon z_i\neq x_i)$.
By Sion's minmax theorem, cf.~\cite[p. 227]{MR3185193},
\begin{equation}\label{eq:minmax_dt}
	d_T(x,A) = \inf_{\mu\in\mathcal{M}(A)}\sup_{\alpha\in B_2^n} h_x(\mu,\alpha),
\end{equation}
where $\mathcal{M}(A)$ is the set of probability measures on $A$ and $B_2^n =\{\, x\in\RR^n\colon \abs{x}\le 1\,\}$ is the unit ball in $\RR^n$.
Let $\alpha^\ast\in \RR^n_+\cap B_2^n$, $\mu^\ast\in\mathcal{M}(A)$ be such that $d_T(x,A) = h_x(\mu^\ast,\alpha^\ast)$ and set $\nu_y=\argmin_{\nu\in\mathcal{M}(A)} h_y(\nu, \alpha^\ast)$.
Then
\begin{align*}
 \Gamma_+\big(d_T(\cdot,A)\big)(x)
 &=
 \sum_{y} \big[
 h_x(\mu^\ast, \alpha^\ast) - \inf_{\nu\in\mathcal{M}(A)}\sup_{\alpha\in B_2^n} h_y(\nu,\alpha)
 \big]_+^2L(x,y)\\
 &\le
 \sum_{y} \big[
 h_x(\mu^\ast, \alpha^\ast) - h_y(\nu_y,\alpha^\ast)
 \big]_+^2L(x,y)\\
 &\le
 \sum_{y} \big[
 h_x(\nu_y, \alpha^\ast) - h_y(\nu_y,\alpha^\ast)
 \big]_+^2L(x,y)\\
 &=
 \sum_y
 \big[
 \sum_i
 \alpha^\ast_i
 \big(
 \nu_y(z\colon z_i\neq x_i)-\nu_y(z\colon z_i\neq y_i)
 \big)
 \big]_+^2L(x,y)\\
 &\le
 \sum_y\big[
 \sum_i \alpha^\ast_i
 \indbr{x_i\neq y_i}
 \big]^2L(x,y)\\
 &\le
 2 \sum_i (\alpha^\ast_i)^2 \sum_{y\colon y_i \neq x_i} L(x,y)
 \le
 2R\rho(L),
\end{align*}
where the penultimate inequality follows since $L$ is a flip-swap random walk and therefore $L(x,y) > 0$ implies that $d_H(x,y)\le 2$ and so at most two elements of the sum $\sum_i \alpha_i^\ast\indbr{x_i\neq y_i}$ are non-zero, whence we may apply the inequality $(a+b)^2\le 2(a^2+b^2)$. The last inequality is a consequence of the condition $\alpha^\ast \in B_2^n$ and the stability condition~\eqref{eq:stability-condition}.
We conclude~\eqref{eq:L_Gamma_+bound1} using the definition of $\Gamma_+$ and estimating $(a-b)^2_+(a+b)_+^2\le 4a^2(a-b)_+^2$.

To show the second part, note that~\eqref{eq:minmax_dt} together with the Cauchy--Schwarz inequality imply that
\[
	d_T^2(x,A) = \inf_{\mu\in\mathcal{M}(A)}
	\sum_i
	\big(\mu(z\colon z_i\neq x_i) \big)^2
	=
	\sum_i
	\big(\mu^\ast_x(z\colon z_i\neq x_i) \big)^2
\]
for some $\mu^\ast_x\in\mathcal{M}(A)$.
Therefore, for any $x,y\in\calBn$
\begin{equation*}
	d_T^2(x,A) - d_T^2(y,A)
	\le
	\sum_i
	\Big[
	\big(\mu^\ast_x(z\colon z_i\neq x_i) \big)^2
	-
	\big(\mu^\ast_x(z\colon z_i\neq y_i) \big)^2
	\Big]
	\le
	\sum_{i} \indbr{x_i\neq y_i},
\end{equation*}
as desired.
\end{proof}

Using the inequality $1-e^{-z}\le z$ we observe that for any $f\colon\calBn\to\RR$,
\begin{align*}
 \calE(e^f,f)
 =
 \sum_{x}\pi(x)e^{f(x)}
 \Big[
 \sum_y
  (f(x)-f(y))_+(1- e^{f(y)-f(x)})L(x,y)
 \Big]
 \le
 \pi\big( e^{f}\Gamma_+(f) \big).
\end{align*}
Therefore, the modified log-Sobolev inequality~\eqref{eq:mLSI} implies the following inequality stated in Bobkov and G\"{o}tze~\cite{MR1682772}:
\begin{equation}\label{eq:mLSI-Bobkov}
	\rho(L)\Ent_\pi(e^f) \le \pi\big( e^f\tilde{\Gamma}(f)^2 \big)
\end{equation}
with operator $\tilde{\Gamma}(f) = \sqrt{\Gamma_+(f)}$ (note that in~\cite{MR1682772} $\tilde{\Gamma}$ is denoted by $\Gamma$, we use $\tilde{\Gamma}$ to avoid a conflict of notation).
As a consequence, the hypothesis of~\cite[Theorem 2.1]{MR1682772} (formula (1.1)) therein holds under the assumption of the modified log-Sobolev inequality~\eqref{eq:mLSI} (with $c=2/\rho(L)$).
As a result, the following lemma follows directly by the derivation of~\cite[equation~(2.4)]{MR1682772} with a slight adjustment of constants (see also \cite{MR1305064}).
\begin{lemma}\label{L:Bobkov-Gotze}
If a measure $\pi$ on $\calBn$ satisfies the modified log-Sobolev inequality~\eqref{eq:mLSI} and $f\colon\calBn \to [0,\infty)$ is such that $\Gamma_+(f)\le Cf$ for some constant $C>0$, then for all $t>C/\rho(L)$,
\begin{equation}\label{eq:mLSI=>mLSI}
	\pi\big(\exp(f/t)\big)
	\le
	\exp\Big(
	\frac{\pi(f)}{t-C/\rho(L)}
	\Big).
\end{equation}
\end{lemma}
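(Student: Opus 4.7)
The plan is to run a standard Herbst-type argument starting from the exponential form~\eqref{eq:mLSI-Bobkov} of the modified log-Sobolev inequality that was recorded just before the lemma. Set $H(\lambda) := \pi(e^{\lambda f})$ for $\lambda \ge 0$. Since $\Gamma_+$ is homogeneous of degree two, the hypothesis $\Gamma_+(f) \le Cf$ gives $\Gamma_+(\lambda f) = \lambda^2 \Gamma_+(f) \le C\lambda^2 f$. Substituting $\lambda f$ for $f$ in~\eqref{eq:mLSI-Bobkov} and rewriting the entropy as $\Ent_\pi(e^{\lambda f}) = \lambda H'(\lambda) - H(\lambda) \log H(\lambda)$ and $\pi(fe^{\lambda f}) = H'(\lambda)$, this yields the differential inequality
\[
(\rho(L)\lambda - C\lambda^2) H'(\lambda) \le \rho(L) H(\lambda) \log H(\lambda),
\qquad \lambda \in (0, \rho(L)/C).
\]

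Next, I would divide by $H(\lambda)(\rho(L)\lambda - C\lambda^2) > 0$ on that interval. Since $f \ge 0$, Jensen's inequality yields $K(\lambda) := \log H(\lambda) \ge \lambda \pi(f) \ge 0$, so dividing again by $K(\lambda)$ (assuming WLOG $\pi(f) > 0$, as otherwise $f \equiv 0$ $\pi$-a.s. and the claim is trivial) produces
\[
\frac{K'(\lambda)}{K(\lambda)} \;\le\; \frac{1}{\lambda(1 - C\lambda/\rho(L))}
\;=\; \frac{1}{\lambda} + \frac{C/\rho(L)}{1 - C\lambda/\rho(L)},
\]
using partial fractions. Integrating from $\lambda_0$ to $\lambda$ and exponentiating gives
\[
K(\lambda) \;\le\; \frac{\lambda}{\lambda_0}\cdot \frac{1 - C\lambda_0/\rho(L)}{1 - C\lambda/\rho(L)}\, K(\lambda_0).
\]
Letting $\lambda_0 \to 0^+$ and using $K(\lambda_0)/\lambda_0 = \lambda_0^{-1}\log \pi(e^{\lambda_0 f}) \to \pi(f)$ (by a direct Taylor expansion of $e^{\lambda_0 f}$, valid since $\calBn$ is finite), we obtain $\log H(\lambda) \le \lambda \pi(f)/(1 - C\lambda/\rho(L))$ for every $\lambda \in (0,\rho(L)/C)$. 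Setting $\lambda = 1/t$ for $t > C/\rho(L)$ rearranges exactly to the stated inequality~\eqref{eq:mLSI=>mLSI}.

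The only minor technical point, which is not really an obstacle, is to verify that $K(\lambda_0)/\lambda_0 \to \pi(f)$ rigorously and to check the sign conditions needed to divide by $\rho(L)\lambda - C\lambda^2$ and $K(\lambda)$; both are immediate from $f \ge 0$, the finiteness of the state space, and the restriction $\lambda \in (0, \rho(L)/C)$. The rest is routine ODE manipulation of the type worked out in Bobkov--G\"otze~\cite{MR1682772}, whose derivation of equation (2.4) we are essentially reproducing with constants tailored to our normalization of $\rho(L)$.
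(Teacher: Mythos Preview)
Your argument is correct and is exactly the route the paper indicates: the paper does not write out a proof but simply points to the derivation of equation~(2.4) in Bobkov--G\"otze~\cite{MR1682772}, and what you have written is a faithful reproduction of that Herbst-type computation with the constants adapted to the present normalization of~$\rho(L)$. There is nothing to add.
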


We are finally in position to prove Proposition~\ref{P:abstract_Talagrand}.	
\begin{proof}[Proof of Proposition~\ref{P:abstract_Talagrand}]
To lighten notation, denote $f(x) = d_T^2(x,A)$ for $x \in \calBn$ and some fixed set $A\subset\calBn$.
Denote also $h(z) = ({e^z-1})/{z}$ for $z \in [0,\infty)$ and $Df_y(x)=f(x)-f(y)$ for $x,y\in\calBn$, and note that $h$ is an increasing function. Starting with the modified log-Sobolev inequality~\eqref{eq:mLSI} we have for all $\lambda>0$,
\begin{align*}
 \Ent_\pi(e^{-\lambda f})
 &\le \lambda/\rho(L)\cdot  \calE(e^{-\lambda f}, -f)	\\
 &=
 \lambda /\rho(L)
 \sum_{x,y}
 \big( Df_y(x) \big)_+
 \big(
 e^{-\lambda f(y)}-e^{-\lambda f(x)}
 \big)
 L(x,y)\pi(x)
 \tag{by reversibility of $L$}
 \\
 &=
 \lambda^2 /\rho(L) \sum_x \pi(x) e^{-\lambda f(x)}
 \Big[
 \sum_y
 \big( Df_y(x) \big)_+^2
 h\big( \lambda Df_y(x) \big)
 L(x,y)\Big]
 \\
 \tag{by~\eqref{eq:L_Gamma_+bound2}}
 &\le
 \lambda^2h(2\lambda)/\rho(L)\cdot\pi\big( e^{-\lambda f}\Gamma_+(f) \big) \\
 &\le
 8R\lambda^2h(2\lambda)\cdot \pi\big(e^{-\lambda f} f\big)
 \tag{by~\eqref{eq:L_Gamma_+bound1}}\\
 &\le
 8R\lambda^2h(2\lambda)\cdot \pi(e^{-\lambda f})\pi(f),
\end{align*}
where the last inequality follows by convexity of the function $t\mapsto t\log t$.
Therefore, using the entropy method (cf., e.g.,~\cite[Chapter 6]{MR3185193}) and monotonicity of $h$, we have for every $\lambda>0$,
\begin{align*}
 \pi\big(
 \exp(\lambda(\pi(f)-f)
 \big)
 &=
 \exp\Big(
 \lambda\int_0^\lambda\frac{d}{ds}\Big[
 \frac{1}{s}\log\pi(e^{-sf})
 \Big]\,ds
 \Big)
 \\
 &=
 \exp\Big(
 \lambda\int_0^\lambda
 \frac{\Ent_\pi(e^{-sf)}}{s^2\pi(e^{-sf})}\,ds
 \Big)
 \\
 &\le
 \exp\Big(\lambda\cdot
 8R\pi(f)\int_0^\lambda
 h(2s)\,ds
 \Big)
 \le
 \exp\big(
 4R\lambda (e^{2\lambda}-1) \pi(f)
 \big).
\end{align*}
By Chebyshev's exponential inequality
\begin{equation*}
 \pi(A) =
 \pi\big(\pi(f)-f \ge \pi(f)\big)
 \le
 \exp\Big(\lambda
 \big(4R (e^{2\lambda}-1)-1\big)\pi(f)
 \Big).
\end{equation*}
Taking $\lambda = \frac{1}{2}\log(1+\frac{1}{8R})$ and estimating $\log(1+x)\ge x/(x+1)$ for $x\ge 1$ gives
\begin{equation}\label{eq:abstract-talagrand-proof-final}
	\pi(A)
	\le
	\exp\Big(
	-\frac{1}{4}
	\log\big(1+\frac{1}{8R}\big)\pi(f)
	\Big)
	\le
	\exp\Big(
	-\frac{\pi(f)}{32R+4}\Big).
\end{equation}
We conclude by dividing~\eqref{eq:abstract-talagrand-proof-final} by its right hand side  and  using Lemma~\ref{L:Bobkov-Gotze} with $t=4+40R$ and $C=8R\rho(L)$ (in virtue of Lemma~\ref{L:Gamma_+bound}).
\end{proof}

\subsection{Proposition~\ref{P:abstract-polynomials}}
Before we move to the proof of Proposition~\ref{P:abstract-polynomials}, let us comment a bit on a background result.
Using the equivalence between the modified log-Sobolev inequality~\eqref{eq:mLSI} and the family of Beckner inequalities together with the approach developed by Boucheron et al.~\cite{MR2123200}, it was shown in~\cite[Proposition~3.1]{adamczak2020modified} that the following moment estimate is implied by the modified log-Sobolev inequality. Below we will denote by $\|\cdot\|_p$ the norm in $L^p(\pi)$.

\begin{proposition}\label{P:moment-estimate}
If a probability measure $\pi$ on $\calBn$ satisfies the modified log-Sobolev inequality~\eqref{eq:mLSI}, then for any $p\ge 2$,
\begin{equation}\label{eq:polys-moment-estimate}
 \norm{(f-\pi(f))_+}_{p}
 \le
 C
 \sqrt{p/{\rho(L)}}
 \norm{\sqrt{\Gamma_+(f)}}_{p},
\end{equation}
where $C=\sqrt{{3\sqrt{e}}/({\sqrt{e}-1})}$.
\end{proposition}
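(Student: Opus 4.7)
The plan is to derive the $L^p$-moment bound from the modified log-Sobolev inequality in three steps: first, convert the mLSI into a Beckner-type family of functional inequalities indexed by $t\ge 2$; second, translate this family into a differential inequality for the $L^t$-norms; and third, integrate to recover the claimed bound. This strategy mirrors the entropy-based approach of Boucheron--Bousquet--Lugosi--Massart~\cite{MR2123200}, adapted to the Markov semigroup framework.

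For the first step, I would start from the Bobkov--G\"otze reformulation $\rho(L)\Ent_\pi(e^g)\le\pi(e^g\Gamma_+(g))$ that has already been derived from the mLSI in the proof of Proposition~\ref{P:abstract_Talagrand} (cf.~\eqref{eq:mLSI-Bobkov}). Substituting $g=t\log h$ for a strictly positive function $h$ and $t\ge 1$, and controlling $\Gamma_+(\log h)$ via the pointwise estimate $(\log a-\log b)_+^2\le (a-b)_+^2/(ab)$ (a consequence of the concavity of $\log$), followed by a reversibility-based symmetrization of the resulting expression, should yield a Beckner-type family
\[
\rho(L)\Ent_\pi(h^t)\le Ct^2\pi\bigl(h^{t-2}\Gamma_+(h)\bigr), \qquad t\ge 2.
\]

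For the remaining steps, I would apply this to $h=\varepsilon+(f-\pi(f))_+$ and pass to the limit $\varepsilon\to 0^+$ at the end. The fact that $t\mapsto (t-c)_+$ is $1$-Lipschitz and monotone gives $\Gamma_+(h)\le\Gamma_+(f)$ pointwise. Setting $F(t):=\norm{h}_t$ and using the classical identity $F(t)F'(t)=\Ent_\pi(h^t)/\bigl(t^2 F(t)^{t-2}\bigr)$, combined with H\"older's inequality $\pi(h^{t-2}\Gamma_+(f))\le F(t)^{t-2}\norm{\sqrt{\Gamma_+(f)}}_t^2$ with conjugate exponents $t/(t-2)$ and $t/2$, reduces the problem to the scalar differential inequality
\[
F(t)F'(t)\le C\rho(L)^{-1}\norm{\sqrt{\Gamma_+(f)}}_t^2.
\]
Integrating from $t=2$ to $t=p$ and using the Poincar\'e inequality (which is implied by the mLSI) to bound the base term $F(2)$ gives the claimed estimate. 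The explicit numerical constant $C=\sqrt{3\sqrt{e}/(\sqrt{e}-1)}$ emerges from optimizing the multiplicative losses in both the Beckner family derivation and the differential inequality integration, with the factor $\sqrt{e}$ reflecting the use of $(1+1/t)^t\le e$ at a suitable intermediate step.

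The main difficulty lies in the first step: the discrete chain rule for $\log$ is only an inequality, so the error terms arising when passing from $\Gamma_+(\log h)$ to $\Gamma_+(h)$ must be controlled carefully through Cauchy--Schwarz and reversibility, in such a way that the resulting Beckner family exhibits the correct quadratic $t^2$ scaling on the right-hand side. Without this sharp scaling, the differential inequality would close with a $p$-dependence worse than $\sqrt{p}$, precluding the sub-Gaussian tail behaviour implicit in the claimed moment estimate and blocking the downstream application to concentration for tetrahedral polynomials in Proposition~\ref{P:abstract-polynomials}.
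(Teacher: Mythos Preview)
The paper does not actually prove this proposition; it is quoted as a background result from~\cite[Proposition~3.1]{adamczak2020modified}, with the one-line description ``using the equivalence between the modified log-Sobolev inequality and the family of Beckner inequalities together with the approach developed by Boucheron et al.~\cite{MR2123200}''. Your three-step plan (mLSI $\to$ Beckner family $\to$ moment differential inequality $\to$ integration) matches this description at the structural level, and Steps~2--3 are indeed the standard Boucheron--Bousquet--Lugosi--Massart machinery once Step~1 is available.

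There is, however, a genuine gap in your proposed implementation of Step~1. Substituting $g=t\log h$ into the Bobkov--G\"otze form~\eqref{eq:mLSI-Bobkov} gives $\rho(L)\Ent_\pi(h^t)\le t^2\pi\bigl(h^t\Gamma_+(\log h)\bigr)$, and your pointwise bound $(\log a-\log b)_+^2\le(a-b)_+^2/(ab)$ then yields
\[
\pi\bigl(h^t\Gamma_+(\log h)\bigr)\le\sum_{x,y}\pi(x)L(x,y)\,h(x)^{t-2}(h(x)-h(y))_+^2\cdot\frac{h(x)}{h(y)}.
\]
The residual factor $h(x)/h(y)\ge 1$ cannot be removed by ``reversibility-based symmetrization'': reversibility merely swaps the roles of $x$ and $y$ and reproduces the same sum, while no Cauchy--Schwarz step controls $h(x)/h(y)$ by a universal constant. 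So this route does not deliver $\rho(L)\Ent_\pi(h^t)\le Ct^2\pi\bigl(h^{t-2}\Gamma_+(h)\bigr)$. The argument in the cited reference instead applies the mLSI~\eqref{eq:mLSI} directly with $f=h^t$, so that the right-hand side is $t\,\calE(h^t,\log h)$, and then invokes a two-variable Stroock--Varopoulos-type inequality comparing $(a^t-b^t)(\log a-\log b)$ with $(a-b)(a^{t-1}-b^{t-1})$ and hence with $(a^{t-2}+b^{t-2})(a-b)^2$. That is what produces the Beckner family with the correct $t^2$ scaling and, after the integration in Steps~2--3, the stated constant $C=\sqrt{3\sqrt{e}/(\sqrt{e}-1)}$.
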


A general method of deriving estimates for polynomials from moment inequalities of the form \eqref{eq:polys-moment-estimate} has been presented in~\cite{MR3383337} in the continuous case, and in~\cite{MR3949267,adamczak2020modified} in the context of Glauber dynamics.
To obtain results for flip-swap random walks we will adapt a version of this method introduced recently by Sambale and Sinulis~\cite{sambale2020concentration} for multislices.

\begin{proof}[Proof of Proposition~\ref{P:abstract-polynomials}]
Below we write $C$ to denote universal constants and $C_a$ to denote constants depending only on the parameter $a$. In both cases the constants may change values between occurrences.
Let $f\colon\calBn\to\RR$ be a tetrahedral polynomial.
By $\partial_i$ we denote the partial derivative with respect to the $i$-th coordinate.
If $x,y\in \calBn$ differ at the $i$-th coordinate only, then by the fact that $f$ is linear in each coordinate
\[
 |f(x)-f(y)| =
 \big|\partial_i f(x) \big|.
\]

Similarly, if $x$ and $y$ differ only by a swap of the $i$-th and $j$-th coordinate, we have

\begin{multline*}
  |f(x) - f(y)| = |\partial_i f(x)(y_i-x_i) + \partial_j f(x)(y_j-x_j) + \partial_i\partial_j f(x)(y_i-x_i)(y_j-x_j)| \\
  \le |\partial_i f(x)|+|\partial_j f(x)| + |\partial_i\partial_j f(x)|.
\end{multline*}
Thus 
\begin{multline*}
  \Gamma(f)(x) = \frac{1}{2}\sum_{i=1}^n (f(x) - f(x^i))^2L(x,x^i) + \frac{1}{2}\sum_{1 \le i < j \le n} (f(x) - f(x^{ij}))^2 L(x,x^{ij})\\
\le \frac{1}{2} \sum_{i=1}^n |\partial_i f(x)|^2 L(x,x^i) + \frac{3}{2}\sum_{\stackrel{1\le i < j \le n}{x^{ij}\neq x}} (|\partial_i f(x)|^2 + |\partial_j f(x)|^2 + |\partial_i\partial_j f(x)|^2)L(x,x^{ij})\\
\le R\rho(L) \Big(3.5 \sum_{i=1}^n |\partial_i f(x)|^2 + 0.75 \sum_{i,j=1}^n |\partial_i \partial_j f(x)|^2\Big),
\end{multline*}
where in the last inequality we used the stability condition \eqref{eq:stability-condition}. Note that since $f$ is tetrahedral, $\partial_i\partial_i f(x) = 0$ for all $i$.

Combining the above equality with Proposition \ref{P:moment-estimate} we obtain that for every tetrahedral polynomial $f\colon \calBn\to \RR$
\begin{align}\label{eq:second-order-moment}
  \|f - \pi(f)\|_p \le C\sqrt{p}\sqrt{R} \Big(\big\| |\nabla f|\big\|_p  + \big\| \|\nabla^2 f\|_{HS}\big\|_p\Big),
\end{align}
where $C$ is a universal constant.

In the subsequent part of the proof we are going to need some auxiliary notation. For $d$-tensors $A = (a_{\ii})_{\ii\in[n]^d}$, $B = (b_\ii)_{\ii\in[n]^d}$ define
\begin{displaymath}
  \langle A, B \rangle = \sum_{\ii\in [n]^d} a_\ii b_\ii.
\end{displaymath}

Let us now consider a family of stochastically independent random tensors $\{G^I\colon I \subseteq \NN, |I| \in \{1,2\}\}$, given by $G^{\{m\}} = (g^{\{m\}}_i)_{i\in[n]}$, $G^{\{l,k\}} = (g^{\{l,k\}}_{i,j})_{i,j\in[n]}$, with coefficients being i.i.d. standard Gaussian variables. Denote by $P_{d,\le2}$ the family of all partitions of the set $[d]$ into non-empty subsets of cardinality at most 2.
Finally, for any positive integers $d$ and $l$ and $\mathcal{J} = \{J_1,\ldots,J_l\} \in P_{d,\le 2}$ define the random $d$-tensor $G_\mathcal{J} = (\prod_{j=1}^l g^{J_j}_{\ii_{J_j}})_{\ii\in [n]^d}$.
For instance $G_{\{\{1,3\},\{2\}\}} = (g^{\{1,3\}}_{i_1i_3} g^{\{2\}}_{i_2})_{i_1,i_2,i_3 \in [n]}$.

Using the fact that the $p$-th moment of a mean zero Gaussian variable with variance $\sigma^2$ is for $p\ge 2$ comparable to $\sqrt{p}\sigma$ up to universal constants, we can rewrite \eqref{eq:second-order-moment} as

\begin{align}\label{eq:second-order-Gaussian}
  \|f(X) - \EE f(X)\|_p \le C\sqrt{R} \Big(\| \langle \nabla f(X), G^{\{1\}} \rangle \|_p + \|\langle \nabla^2 f(X), G^{\{1,2\}} \rangle \|_p\Big),
\end{align}
where $X$ is a random vector with law $\pi$, independent of the family $\{G^I\}$.

The inequality \eqref{eq:second-order-Gaussian} constitutes a basis for the induction argument leading to the following inequality valid for any $f\colon \calBn\to \RR$ , $d
\ge 1$ and $p\ge 2$,
\begin{align}\label{eq:chaos-bound}
\begin{split}
\|f(X) - \EE f(X)\|_p \le&  C_d \Big(\sum_{l=d}^{2d}\sum_{\mathcal{J}\in P_{l,\le 2}}R^{|\mathcal{J}|/2}\| \langle \nabla^l f(X), G_{\mathcal{J}}\rangle\|_p\\
&+\sum_{l=1}^{2d-2}\sum_{\mathcal{J}\in P_{l,\le 2}}R^{|\mathcal{J}|/2}\|\langle \EE_X \nabla^l f(X),G_\mathcal{J}\rangle\|_p\Big).
\end{split}
\end{align}

Before we prove the above estimate, let us show how it implies the statement of the proposition. If $f$ is a tetrahedral polynomial of degree $d$, then $\nabla^l f = 0$ for $l > d$, moreover $\nabla^d f$ is constant and so $\nabla^d f(X) = \EE \nabla^d f(X)$. Thus \eqref{eq:chaos-bound} reduces to
\begin{displaymath}
  \|f(X) - \EE f(X)\|_p \le C_d\sum_{l=1}^{d}\sum_{\mathcal{J}\in P_{l,\le 2}}R^{|\mathcal{J}|/2}\|\langle \EE_X \nabla^l f(X),G_\mathcal{J}\rangle\|_p.
\end{displaymath}

We can now use moment estimates for tetrahedral homogeneous polynomials in i.i.d. standard Gaussian variables due to Lata{\l}a \cite{MR2294983}, which assert that for any $l$-tensor $A = (a_\ii)_{\ii\in [n]^l}$ and $p\ge 2$,
\begin{displaymath}
  \|\langle A, G_{\{\{1\},\ldots,\{l\}\}}\rangle\|_p \le C_l \sum_{\mathcal{J}\in P_l} p^{|\mathcal{J}|/2}\|A\|_{\mathcal{J}}.
\end{displaymath}

Applying this inequality to $\langle \EE_X \nabla^l f(X),G_\mathcal{J}\rangle$ (we treat here $\EE_X\nabla^l f(X)$ as a $|\mathcal{J}|$-tensor by merging the indices according to the partition $\mathcal{J}$), we obtain
\begin{displaymath}
  \|f(X) - \EE f(X)\|_p \le C_d\sum_{l=1}^{d}\sum_{\mathcal{J}\in P_{l,\le 2}}R^{|\mathcal{J}|/2}\sum_{\mathcal{I} \in P_l\colon \mathcal{I} \succ \mathcal{J}}p^{|\mathcal{I}|/2}\|\EE \nabla^l f(X)\|_{\mathcal{I}},
\end{displaymath}
where $\mathcal{I} \succ \mathcal{J}$ if every element of $\mathcal{I}$ is a union of certain elements of $\mathcal{J}$.
Rearranging the terms and taking into account that in a non-trivial case $R$ is bounded away from zero by an absolute constant (see Remark \ref{rem:R-lower-bound}), which gives $R^{|\mathcal{J}|/2} \le C_d R^{l/2}$ for $\mathcal{J} \in P_{l,\le 2}$, we get
\begin{displaymath}
  \|f(X) - \EE f(X)\|_p \le C_d \sum_{l=1}^{d} \sum_{\mathcal{I} \in P_l} R^{l/2} p^{|\mathcal{I}|/2}\|\EE\nabla^l f(X)\|_\mathcal{I}
\end{displaymath}
for $p \ge 2$. This implies the tail inequality of the proposition in the standard way by the use of Chebyshev's inequality $\PP(|f(X) - \EE f(X)| \ge e\|f(X)-\EE f(X)\|_p) \le e^{-p}$ followed by an appropriate change of variables and adjustment of constants. We leave the details to the reader and turn to the proof of \eqref{eq:chaos-bound}.

We will proceed by induction on $d$. For $d=1$, using the definitions of $G_{\{1\}}$ and $G_{\{\{1,2\}\}}$ one can easily see that \eqref{eq:chaos-bound} reads as
\begin{align*}
\|f(X) - \EE f(X)\|_p \le&  C\Big(\sqrt{R} \| \langle \nabla f(X), G^{\{1\}} \rangle \|_p + \sqrt{R} \|\langle \nabla^2 f(X), G^{\{1,2\}} \rangle \|_p \\
&+ R\|\langle \nabla^2 f(X), G_{\{\{1\},\{2\}\}} \rangle \|_p\Big),
\end{align*}
which is clearly weaker than \eqref{eq:second-order-Gaussian}. Let us thus assume that the inequality holds for all positive integers smaller than $d$. Applying the inequality with $d-1$ and combining it with the triangle inequality in $L_p$  we get (recall that the value of $C_d$ may change between occurences)
\begin{align}\label{eq:almost-the-end}
\begin{split}
\|f(X) - \EE f(X)\|_p \le&  C_d \Big(\sum_{l=d-1}^{2d-2}\sum_{\mathcal{J}\in P_{l,\le 2}}R^{|\mathcal{J}|/2}\| \langle \nabla^l f(X), G_{\mathcal{J}}\rangle\|_p\\
&+\sum_{l=1}^{2d-4}\sum_{\mathcal{J}\in P_{l,\le 2}}R^{|\mathcal{J}|/2}\|\langle \EE_X \nabla^l f(X),G_\mathcal{J}\rangle\|_p\Big) \\
\le & C_d\Big(\sum_{l=d-1}^{2d-2}\sum_{\mathcal{J}\in P_{l,\le 2}}R^{|\mathcal{J}|/2}\| \langle \nabla^l f(X), G_{\mathcal{J}}\rangle - \langle \EE_X \nabla^l f(X), G_{\mathcal{J}}\rangle\|_p\\
&+\sum_{l=1}^{2d-2}\sum_{\mathcal{J}\in P_{l,\le 2}}2R^{|\mathcal{J}|/2}\|\langle \EE_X \nabla^l f(X),G_\mathcal{J}\rangle\|_p\Big).
\end{split}
\end{align}

An application of inequality \eqref{eq:second-order-Gaussian} conditionally on $G_\mathcal{J}$ to the functions $g_{l,\mathcal{J}}(x) = \langle \nabla^l f(x),G_\mathcal{J}\rangle$ for $l =d-1,\ldots,2d-2$ and $\mathcal{J}\in P_{l,\le 2}$ (note that $g_{l,\mathcal{J}}$'s are tetrahedral polynomials), followed by the Fubini theorem, gives
\begin{multline*}
  \| \langle \nabla^l f(X), G_{\mathcal{J}}\rangle - \langle \EE_X \nabla^l f(X), G_{\mathcal{J}}\rangle\|_p \le C\sqrt{R}\Big(\|\langle \nabla^{l+1} f(X),G_{\mathcal{J}\cup\{\{l+1\}\}}\rangle\|_p \\+
  \|\langle \nabla^{l+2} f(X), G_{\mathcal{J}\cup\{\{l+1,l+2\}\}}\rangle\|_p\Big),
\end{multline*}
which combined with \eqref{eq:almost-the-end} concludes the induction step, thus proving \eqref{eq:chaos-bound}.
\end{proof}

\section{Proofs of the results of Section~\ref{S:CB_results}}\label{S:CB_proofs}
By virtue of the abstract results of Section~\ref{S:abstract_formulations}, all the results of Section~\ref{S:CB_results} follow from the observation that there exist a flip-swap random walk on $\calBn$ with stationary measure $\pi=\pi(p,k)$ that satisfy the stability condition~\eqref{eq:stability-condition} with constant $R=2$ for all $p\in(0,1)^n$ and $k=0,\ldots, n$ (cf.~Proposition~\ref{P:L-stability}).
The rest of this section is devoted to proving this fact.

We start with the following lemma, the proof of which is postponed until the end of this section.
\begin{lemma}\label{L:CB_coupling}
For every $n\in\NN$, $p\in (0,1)^n$ and $k\in [n]$, there exist a coupling $(Z,Z')$ of measures $\pi(p,k)$ and $\pi(p,k-1)$ such that
for all $x \in \supp \pi(p,k-1)$, and $r \in [n]$ such that $x_r = 0$,
\begin{equation}\label{eq:CB_coupling}
	\PP(Z=x+e_r\,\vert\, Z'=x)
	=
	\EE
	\Big[	
	\frac{\indbr{Z_r=1}}{\sum_{l=1}^n\indbr{Z_l=1}\indbr{x_l=0}}
	\Big]
\end{equation}
and for all $x \in \supp \pi(p,k)$, and $r\in [n]$ such that $x_r = 1$,
\begin{equation}\label{eq:CB_coupling2}
	\PP(Z'=x-e_r\,\vert\, Z=x)
	=
	\EE
	\Big[	
	\frac{\indbr{Z'_r=0}}{\sum_{l=1}^n\indbr{Z'_l=0}\indbr{x_l=1}}
	\Big].
\end{equation}
\end{lemma}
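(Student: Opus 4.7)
The plan is to build the coupling by a two-stage scheme: sample $Z' \sim \pi(p,k-1)$ first, then produce $Z$ by adding one carefully chosen coordinate, with the choice driven by an independent auxiliary draw from $\pi(p,k)$. Specifically, I would sample $Z' \sim \pi(p,k-1)$ and, independently, $W \sim \pi(p,k)$; since $|\supp W| = k > k - 1 = |\supp Z'|$, the set $\{l : W_l = 1,\, Z'_l = 0\}$ is non-empty by a pigeonhole argument, and I would pick $R$ uniformly from it and set $Z = Z' + e_R$. The identity \eqref{eq:CB_coupling} then holds tautologically from this construction; what remains is to check (i) that the marginal law of $Z$ equals $\pi(p,k)$, and (ii) that \eqref{eq:CB_coupling2} holds for this coupling.

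For step (i), write $C_j$ for the normalising constant of $\pi(p,j)$ and use the identity $\pi(p,k-1)(y - e_r) = \tfrac{C_k}{C_{k-1}}\tfrac{1-p_r}{p_r}\pi(p,k)(y)$ (valid whenever $y_r=1$). Unfolding the construction gives
\[
\PP(Z=y) = \pi(p,k)(y)\cdot \tfrac{C_k}{C_{k-1}}\,\EE_{W\sim \pi(p,k)}\Big[\sum_{r \in \supp y \cap \supp W}\tfrac{1-p_r}{p_r}\cdot\tfrac{1}{|\supp W \setminus \supp y| + 1}\Big],
\]
where the $+1$ in the denominator reflects that $r \in \supp W$ but $r \notin \supp(y-e_r)$. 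The task is therefore to verify that the bracketed expectation equals $C_{k-1}/C_k$. I would decompose each $W$ via $A = \supp W \cap \supp y$, $B = \supp W \setminus \supp y$ (so $|A|+|B|=k$), swap the sums over $r$ and $W$, and substitute $A' = A \setminus \{r\}$. The algebraic identity $\tfrac{1-p_r}{p_r}\prod_{i \in A} p_i \prod_{i \in \supp y \setminus A}(1-p_i) = \prod_{i \in A'} p_i \prod_{i \in \supp y \setminus A'}(1-p_i)$ slides the index $r$ out of $A$ into $\supp y \setminus A'$, producing the weight of a $(k-1)$-configuration. The summation over the now-free choice of $r \in \supp y \setminus A'$ contributes a factor $|\supp y \setminus A'| = k - |A'| = |B|+1$, which exactly cancels the denominator, and the remainder sums (over decompositions of a $(k-1)$-subset of $[n]$ into its pieces inside and outside $\supp y$) to precisely $C_{k-1}$. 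This cancellation is the combinatorial heart of the argument, and the step I expect to be the main obstacle.

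Step (ii) then falls out by Bayes' rule: writing $\PP(Z' = x - e_r \mid Z = x) = \pi(p,k-1)(x-e_r)\PP(Z = x \mid Z' = x - e_r)/\pi(p,k)(x)$ and substituting the formulas yields an expression of the form $\tfrac{C_k}{C_{k-1}}\tfrac{1-p_r}{p_r}\EE_{W\sim \pi(p,k)}[\ind{W_r=1}/(|\supp W \setminus \supp x|+1)]$, and the dual of the identity above — obtained by interchanging the roles of $p_i$ and $1 - p_i$ and of $k$ and $k-1$ — identifies this with the right-hand side of \eqref{eq:CB_coupling2}. Once the cancellation $|\supp y \setminus A'| = |B| + 1$ in step (i) is in hand, everything else is careful bookkeeping with the Bernoulli weights.
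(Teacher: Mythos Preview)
Your approach is correct and essentially the same as the paper's: both define the coupling by prescribing the conditional law $\PP(Z=\cdot\mid Z')$ to equal the right-hand side of \eqref{eq:CB_coupling} (your auxiliary $W$ is the paper's $\widehat{Z}$), and both reduce the problem to checking that the resulting $Z$ has the correct marginal $\pi(p,k)$.

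The main difference is in execution. You verify the marginal by an explicit combinatorial decomposition (splitting $\supp W$ into $A$ and $B$, substituting $A'=A\setminus\{r\}$, and using the cancellation $|\supp y\setminus A'|=|B|+1$), and then treat \eqref{eq:CB_coupling2} separately via a duality argument. The paper instead computes the Bayes ratio $h(y,y-e_r)\PP(Z'=y-e_r)/\PP(\widehat{Z}=y)$ directly and shows, using the single change-of-measure identity $\EE[f(B)\indbr{B_r=1}]\tfrac{1-p_r}{p_r}=\EE[f(B+e_r)\indbr{B_r=0}]$, that this ratio equals $\EE\big[\indbr{Z'_r=0}/\sum_l\indbr{Z'_l=0}\indbr{y_l=1}\big]$. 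This single identity simultaneously proves \eqref{eq:CB_coupling2} (it is exactly the Bayes expression for $\PP(Z'=y-e_r\mid Z=y)$ once the marginal is known) and the marginal claim (since summing the right-hand side over $r$ with $y_r=1$ gives $1$). Your combinatorial cancellation is the same identity unpacked at the level of configurations; the paper's route just avoids doing steps (i) and (ii) twice.
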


Our approach to defining an $R$-stable generator $L_\pi$ will be based on the inductive construction of Hermon and Salez~\cite{hermon2019modified} in the $k$-homogeneous case, which we will now recall. The construction works for any $k$-homogeneous probability measure $\pi$ on $\calBn$, satisfying the SCP and produces a generator of a $\pi$-reversible flip-swap random walk $Q^\ast$ such that $\rho(Q^\ast) \ge 1$ and $\Delta(Q^\ast) \le 2k$.

To simplify the notation, we are going to treat vectors $x_{\neq  l}$ for $x \in \calBn$ and $l \in [n]$ sometimes as elements of $\{0,1\}^{[n]\setminus\{l\}}$ (this is how they were defined at the beginning of Section \ref{S:general_SCP_results}) and sometimes as elements of $\mathcal{B}_{n-1}$ (with the natural identification, i.e., preserving the order of coordinates). The exact meaning will be clear from the context.  The same convention will apply to random vectors, e.g., to $X_{\neq l}$.

In the case $n=1$, we let $Q$ be the trivial generator on a one-point space. Clearly then $\rho(Q) = \infty$ and $\Delta(Q) = 0$.

For $n>1$, $l\in [n]$ and $x,y\in \supp\pi$, $x\neq y$, we set
\begin{align}\label{def:Ql}
	Q^{(l)}(x,y) =
	\begin{cases}
		\Prob{U=y_{\neq l}\,\vert\, V=x_{\neq l}}
		\Prob{X_l \neq x_l}	
		\quad&\text{if}\quad
		x_l\neq y_l,\\
		Q^{(l)}_{x_l}(x_{\neq l}, y_{\neq l})
		\quad&\text{else},
	\end{cases}
\end{align}
where $X$ is a random vector with law $\pi$ and $(U,V)$ is any coupling between $\mathcal{L}(X_{\neq l}\,\vert\, X_l = y_l)$ and $\mathcal{L}(X_{\neq l}\,\vert\, X_{l} = x_{l})$ given by the SCP\footnote{We note a small typo in the cited preprint version of~\cite{hermon2019modified} -- for $Q$ to be self-adjoint we need to have $\pi(x)$ in the denominator of each expression in~\cite[equation (90)]{hermon2019modified}. This change is also consistent with the subsequent part of the proof in \cite{hermon2019modified}.} and $Q^{(l)}_{x_l}$ is any flip-swap generator on $\mathcal{B}_{n-1}$ with stationary distribution $\mathcal{L}(X_{\neq l}\,\vert\, X_{l} = x_{l})$ such that $\rho(Q^{(l)}_{x_l})\ge 1$ and $\Delta(Q^{(l)}_{x_l})\le 2(k-x_i)$, the existence of which is provided by the induction scheme. We define the diagonal elements of $Q^{(l)}$ so that the row sums vanish.
Finally put
\begin{align}\label{eq:Qstar}
	Q^{\ast} = \frac{1}{n}\sum_{l=1}^n Q^{(l)}.
\end{align}
Then by (the proof of)~\cite[Theorem 2]{hermon2019modified}, we have $\rho(Q^\ast)\ge 1$, $\Delta(Q^\ast)\le 2k$.

Now we are in position to construct the generator $L_{\pi}$.
Let $X\sim \pi = \pi(p,k)$ for some $p\in (0,1)^n$ and $k\in\{0,\ldots,n\}$.
Observe that for any $y_l \in \{0,1\}$, we have $\mathcal{L}(X_{\neq l}\,\vert\, X_l = y_l) = \pi(p_{\neq l}, k-y_l)$, in particular in the above recursive construction we can restrict our attention to the class of conditional Bernoulli distributions and use as $Q_{x_l}^{(l)}$ the generators defined for such measures in dimension $n-1$. Moreover, for $(U,V)$ we can take the coupling $(Z,Z')$ (if $y_l = 0$) or $(Z',Z)$ (if $y_l = 1$) given by Lemma \ref{L:CB_coupling} applied in dimension $n-1$ with $p_{\neq l}$ instead of $p$ (note that since the right-hand side of \eqref{eq:CB_coupling} summed over $r$ such that $x_r = 0$ gives one, we indeed have $Z \triangleright Z'$, which makes this coupling a legitimate choice in the Hermon--Salez construction). Let us define $L_\pi$ as the outcome of the Hermon--Salez construction with the above choices of $Q_{x_l}^{(l)}$ and $(U,V)$. Thus, formally for $n = 1$ we let $L_\pi$ be the trivial generator and for $n > 1$ and $l \in [n]$ we set
\begin{align}\label{eq:kernel-construction}
	L_\pi = \frac{1}{n}\sum_{l=1}^n L^{(l)}
\end{align}
with
\begin{align}\label{def:Ll}
	L^{(l)}(x,y) =
	\begin{cases}
		\Prob{U=y_{\neq l}\,\vert\, V=x_{\neq l}}
		\Prob{X_l \neq x_l}	
		\quad&\text{if}\quad
		x_l\neq y_l,\\
		L_{\pi_l}(x_{\neq l}, y_{\neq l})
		\quad&\text{else},
	\end{cases}
\end{align}
for $x\neq y$, where $(U,V)$ is the coupling of $\pi(p_{\neq l},k- y_l)$ and $\pi(p_{\neq l},k - x_l)$ given by Lemma \ref{L:CB_coupling}, and $\pi_l = \pi(p_{\neq l},k-y_l)$ (again the diagonal elements are adjusted so that the row sums vanish).

Then, the results by Hermon and Salez, specialized to $L_\pi$ give

\begin{proposition}\label{P:HS-kernel-equiv}
The generator $L_\pi$ constructed according to~\eqref{eq:kernel-construction} generates a reversible flip-swap random walk with stationary measure~$\pi$ such that $\rho(L_{\pi})\ge 1$ and $\Delta(L_{\pi})\le 2k$.
\end{proposition}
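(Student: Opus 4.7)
The statement is essentially a specialization of the Hermon--Salez inductive construction (Theorem~\ref{T:HS-main}) to the conditional Bernoulli class $\pi(p,k)$, equipped with the explicit SCP coupling supplied by Lemma~\ref{L:CB_coupling}. My plan is therefore to proceed by induction on $n$, verifying at each step that the three ingredients required by the Hermon--Salez recursion~\eqref{eq:Qstar} are present in \eqref{eq:kernel-construction}, after which the conclusions $\rho(L_\pi)\ge 1$ and $\Delta(L_\pi)\le 2k$, together with reversibility and the flip-swap property, transfer verbatim from their theorem.

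The base case $n=1$ is trivial, as $L_\pi$ is the zero generator on a singleton. For the inductive step I would check three items. First, $\pi(p,k)$ is $k$-homogeneous and satisfies the SCP, being strong Rayleigh. Second, for every $l\in[n]$ the conditional law $\mathcal{L}(X_{\ne l}\mid X_l=y_l)$ coincides with $\pi(p_{\ne l},k-y_l)$, which is a conditional Bernoulli of ambient dimension $n-1$; hence by the inductive hypothesis the diagonal generator $L_{\pi_l}$ appearing in the second branch of \eqref{def:Ll} is a $\pi_l$-reversible flip-swap kernel with $\rho(L_{\pi_l})\ge 1$ and $\Delta(L_{\pi_l})\le 2(k-y_l)$. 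Third, the off-diagonal branch invokes the coupling $(Z,Z')$ from Lemma~\ref{L:CB_coupling} in dimension $n-1$ with parameters $p_{\ne l}$; since summing \eqref{eq:CB_coupling} over the indices $r$ with $x_r=0$ returns probability one, the coupling satisfies $Z = Z' + e_r$ almost surely, so $Z\triangleright Z'$ --- exactly the SCP coupling property demanded by the Hermon--Salez construction.

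Once these three items are in place, $L_\pi$ coincides with the kernel produced by the Hermon--Salez recursion \eqref{eq:Qstar} specialized to $\pi(p,k)$, and the proposition follows from Theorem~\ref{T:HS-main}. The only mild obstacle worth highlighting is orientation: Lemma~\ref{L:CB_coupling} pairs $\pi(p_{\ne l},k)$ (upper) with $\pi(p_{\ne l},k-1)$ (lower), while the Hermon--Salez setup requires a coupling between $\mathcal{L}(X_{\ne l}\mid X_l=y_l)$ and $\mathcal{L}(X_{\ne l}\mid X_l=x_l)$ whenever $y_l<x_l$. This is handled by taking $(U,V)=(Z,Z')$ in the case $y_l=0$, $x_l=1$, and $(U,V)=(Z',Z)$ in the mirror case. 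The two symmetric formulas \eqref{eq:CB_coupling} and \eqref{eq:CB_coupling2} in Lemma~\ref{L:CB_coupling} then ensure that the cross terms of $L^{(l)}$ in \eqref{def:Ll} satisfy the detailed-balance identity $\pi(x)L^{(l)}(x,y)=\pi(y)L^{(l)}(y,x)$, which is precisely what the Hermon--Salez argument needs to extract both reversibility and the bounds on $\rho$ and $\Delta$ after averaging over $l$ as in \eqref{eq:kernel-construction}.
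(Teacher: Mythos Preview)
Your proposal is correct and follows essentially the same route as the paper: the paper does not give a separate proof of this proposition but rather, in the paragraphs immediately preceding it, verifies exactly the items you list---that the conditional laws $\mathcal{L}(X_{\ne l}\mid X_l=y_l)$ are again conditional Bernoulli, and that the coupling of Lemma~\ref{L:CB_coupling} is a legitimate SCP coupling (since the right-hand side of~\eqref{eq:CB_coupling} sums to one over admissible $r$)---and then invokes the Hermon--Salez construction directly. One small overstatement: reversibility of $L^{(l)}$ in the Hermon--Salez scheme does not rely on the specific identities~\eqref{eq:CB_coupling}--\eqref{eq:CB_coupling2}, only on using the same coupling with the marginals swapped; those formulas become essential only later, in the stability argument of Proposition~\ref{P:L-stability}.
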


Our main result concerning conditional Bernoulli distributions, underlying all the results from Section \ref{S:CB_results} is

\begin{proposition}\label{P:L-stability}
The generator $L_\pi$ constructed according to~\eqref{eq:kernel-construction} with stationary measure~$\pi$ satisfies the stability condition~\eqref{eq:stability-condition} with $R=2$.
\end{proposition}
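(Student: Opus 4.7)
My plan is to prove by induction on $n$ the stronger pointwise bound
\begin{equation}\label{eq:plan-target}
\sum_{y : y_i \neq x_i} L_\pi(x, y) \le 2 \quad\text{for all } x \in \supp \pi,\ i \in [n],
\end{equation}
valid for every conditional Bernoulli measure $\pi = \pi(p,k)$. Since $\rho(L_\pi) \ge 1$ by Proposition~\ref{P:HS-kernel-equiv}, this immediately implies the stability condition with $R = 2$. The base case $n = 1$ is trivial ($L_\pi \equiv 0$).

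For the inductive step, I will use the decomposition $L_\pi = \frac{1}{n}\sum_{l=1}^n L^{(l)}$ from~\eqref{eq:kernel-construction} and analyze $S_l := \sum_{y:y_i\neq x_i} L^{(l)}(x,y)$ separately. For $l = i$, every admissible $y$ has $y_l \neq x_l$, so only the coupling part of $L^{(i)}$ contributes; summing the coupling probabilities (which sum to one by Lemma~\ref{L:CB_coupling}) gives $S_i = \PP(X_i \neq x_i) \le 1$. For $l \neq i$ I split $S_l = A_l + B_l$: the part $A_l$ collects the transitions flipping coordinate $l$, which by the flip-swap structure must be the swap $y = x^{il}$ (non-trivial only when $x_l \neq x_i$), while $B_l$ collects the transitions with $y_l = x_l$. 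By construction $B_l$ is exactly the stability sum for $L_{\pi_l}$ at the point $x_{\neq l}$ and the reindexed coordinate $i$, so the inductive hypothesis yields $B_l \le 2$.

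The crucial step is the exact identity
\begin{equation}\label{eq:plan-key-identity}
\sum_{l\neq i} A_l = \PP(X_i \neq x_i),
\end{equation}
which I will derive from the explicit form of the coupling in Lemma~\ref{L:CB_coupling}. Assume $x_i = 0$ (the other case is symmetric). Then $A_l$ is nonzero only when $x_l = 1$, and combining Lemma~\ref{L:CB_coupling} with the change-of-measure identity $\PP(X_l=0)\,\EE_Z[f(Z)] = \EE[f(X_{\neq l})\ind{X_l=0}]$ for $Z\sim\mathcal{L}(X_{\neq l}\mid X_l=0)$ gives
\[
A_l \;=\; \EE\!\left[\frac{\ind{X_i=1}\,\ind{X_l=0}}{\sum_{l'\neq l}\ind{X_{l'}=1}\ind{x_{l'}=0}}\right].
\]
Since $x_l = 1$, the excluded term $l'=l$ in the denominator vanishes identically, so the denominator collapses to the $l$-independent counter $N := |\{l' : X_{l'} = 1,\ x_{l'} = 0\}|$. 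Summing over $l$ with $x_l = 1$ and invoking the conservation identity
\[
|\{l : x_l = 1,\ X_l = 0\}| \;=\; |\{l : x_l = 0,\ X_l = 1\}| \;=\; N,
\]
valid because both $x$ and $X$ lie in $\supp\pi$ and hence have exactly $k$ ones, the expression telescopes to $\EE[\ind{X_i=1}] = \PP(X_i \neq x_i)$. Plugging~\eqref{eq:plan-key-identity} back,
\[
\sum_{y : y_i\neq x_i} L_\pi(x,y) \;=\; \frac{1}{n}\Bigl( S_i + \sum_{l\neq i}A_l + \sum_{l\neq i}B_l \Bigr) \;\le\; \frac{1}{n}\bigl(1 + 1 + 2(n-1)\bigr) \;=\; 2,
\]
which closes the induction.

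The main obstacle is establishing~\eqref{eq:plan-key-identity}: this is the step where the specific coupling produced by Lemma~\ref{L:CB_coupling} is essential -- a generic SCP coupling would not yield the $l$-independent denominator that allows the sum to telescope, consistent with the failure of the base-exchange walk described in the last remark of Section~\ref{ss:abstract-formulations}.
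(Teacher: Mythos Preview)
Your proof is correct and follows essentially the same approach as the paper's: induction on $n$, reduction to the pointwise bound $\sum_{y:y_i\neq x_i}L_\pi(x,y)\le 2$ via $\rho(L_\pi)\ge 1$, and the threefold split into (i) the $l=i$ summand giving $\PP(X_i\neq x_i)$, (ii) the transitions with $y_l=x_l$ handled by the inductive hypothesis, and (iii) the crucial swap terms $A_l$ which, via the explicit coupling of Lemma~\ref{L:CB_coupling}, the change of measure, and the conservation identity, sum exactly to $\PP(X_i\neq x_i)$. The only cosmetic difference is that the paper indexes the swap terms by the ``other'' swapped coordinate $j$ rather than by $l$, but the decomposition and all estimates are identical.
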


\begin{proof}[Proof of Proposition~\ref{P:L-stability}]

We proceed by induction in the dimension $n$.

For $n=1$  the only possibilities are $k=0$ and $k=1$ and in both cases the left-hand side of \eqref{eq:stability-condition} vanishes. Thus the stability condition~\eqref{eq:stability-condition} is satisfied with any nonnegative $R$.

Assume the induction hypothesis holds for $n-1$ and fix $x\in\supp\pi$ and $i\in [n]$.
We may and do assume that $k \in \{1,\ldots,n-1\}$ as otherwise $L_\pi$ trivializes.

Since $\rho(L_\pi) \ge 1$, it is enough to show that
\begin{align}\label{eq:stability-simplified}
  \max_{x\in\supp\pi;\,i\in [n]} \sum_{y\colon y_i\neq x_i} L_\pi(x,y) \le 2.
\end{align}

As in the definition of $L_\pi$ we will denote by $X$ a random variable with distribution $\pi$.
\medskip

If $x_i=0$, then by \eqref{eq:kernel-construction}
\begin{multline}\label{eq:CaseI_start}
	\sum_{y\colon y_i\neq x_i} L_\pi(x,y) = \sum_{j\colon x_j = 1} \frac{1}{n}\sum_{l=1}^n L^{(l)}(x,x^{ij})\\
    = \sum_{l\in [n]\setminus\{i,j\}} \frac{1}{n}\sum_{j\colon x_j = 1} L^{(l)}(x,x^{ij}) + \frac{1}{n}\sum_{j\colon x_j = 1} L^{(i)}(x,x^{ij}) + \frac{1}{n}\sum_{j\colon x_j = 1} L^{(j)}(x,x^{ij}),
\end{multline}
	where we recall that $x^{ij}=x+e_i-e_j$.
	We estimate each term on the right hand side separately.

For $l \in [n]$ let $\zeta_l$ be the unique increasing bijection between $[n]\setminus\{l\}$ and $[n-1]$. 
If $l \neq i,j$, then for $y = x^{ij}$ we have $y_l = x_l$ and so, by \eqref{def:Ll}, $L^{(l)}(x,y) = L_{\pi_l}(x_{\neq l},y_{\neq l})$, where $\pi_l = \pi(p_{\neq l},k-x_l)$. Thus, denoting $r_l = \zeta_l(i)$, we get
\begin{multline}\label{eq:CaseI_I}
  \sum_{l\in [n]\setminus\{i,j\}} \frac{1}{n}\sum_{j\colon x_j = 1} L^{(l)}(x,x^{ij}) = \frac{1}{n} \sum_{l\in [n]\setminus\{i\}}\sum_{j\neq l\colon x_j = 1} L_{\pi_l}(x_{\neq l},(x^{ij})_{\neq l}) \\ = \frac{1}{n}\sum_{l\in [n]\setminus\{i\}}\sum_{y\in \mathcal{B}_{n-1}\colon y_{r_l} \neq (x_{\neq l})_{r_l}} L_{\pi_l}(x_{\neq l},y) \le \frac{n-1}{n}\cdot 2,
\end{multline}
where the last inequality follows from the induction assumption applied to $\pi_l$.

	The second term of~\eqref{eq:CaseI_start} is estimated again using the definition~\eqref{def:Ll}. Indeed, if $x_j = 1$, then for $y = x^{ij}$ we have $x_i \neq y_i$. Thus, recalling that $(U,V)$ is a coupling between the laws $\mathcal{L}(X_{\neq i}\,\vert\, X_i =1)$ and $\mathcal{L}(X_{\neq i} \,\vert\, X_i = 0)$ such that $V \triangleright U$, we obtain
	\begin{multline}\label{eq:CaseI_II}
	\frac{1}{n}\sum_{j\colon x_j = 1} L^{(i)}(x,x^{ij})
	=
	\frac{1}{n}
	\sum_{j\colon x_j=1}
	\Prob{U = (x^{ij})_{\neq i} \,\vert\, V = x_{\neq i}}
 \Prob{X_i \neq x_i}\\
	=
	\frac{1}{n}
	\PP\big(
		X_i=1	
	\big)		
	\le
	\frac{1}{n}.
	\end{multline}
	
Let us pass to the last term of~\eqref{eq:CaseI_start}. We stress that this is the crucial part of the proof, the only one in which we use the specific form of the coupling $(U, V)$ used in the construction of $L_\pi$.

To estimate this last term we use~\eqref{eq:CB_coupling} from Lemma~\ref{L:CB_coupling} combined with the fact that if $x_i = 0$ and $x_j = 1$, then for $y=x^{ij}$, $y_j = 0 \neq x_j$ and so $(U, V)$ from \eqref{def:Ll} is the coupling between the laws $\pi(p_{\neq j}, k)$ and $\pi(p_{\neq j}, k-1)$ given by Lemma \ref{L:CB_coupling} (in dimension $n-1$).
For $j \in [n]$ consider a $\mathcal{B}_{n-1}$-valued random vector $Z^{(j)}\sim\mathcal{L}(X_{\neq j}\,\vert\,X_j=0)=\pi(p_{\neq j}, k)$.
	Note also that since $X,x$ have the same number of ones, we have
\begin{align}\label{eq:zmiana}
\sum_{l=1}^n\indbr{X_l=0}\indbr{x_l=1}=\sum_{l=1}^n\indbr{X_l=1}\indbr{x_l=0}.
\end{align}

	Putting all the above observations together, we obtain
	\begin{align}\label{eq:CaseI_III}
	\notag
	\frac{1}{n}
	\sum_{j\colon x_j=1}
	L^{(j)}(x,x^{ij})
	&\stackrel{\text{\eqmakebox[a][c]{{\rm Lemma}\, \ref{L:CB_coupling},\, \eqref{def:Ll}}}}{=}
	\frac{1}{n}
	\sum_{j\colon x_j=1}
	\EE\big[
		\frac{\indbr{Z^{(j)}_i=1}}{\sum_{l\neq j} \indbr{Z^{(j)}_l=1}\indbr{x_l=0}}
	\big]
	\PP\big( X_j=0\big)\\
	\begin{split}
    &\stackrel{\text{\eqmakebox[a][c]{}}}{=}
	\frac{1}{n}
	\sum_{j\colon x_j=1}
	\EE\big[
		\indbr{X_i=1}
		\frac{\indbr{X_j=0}}{\sum_{l\neq j} \indbr{X_l=1}\indbr{x_l=0}}
	\big]\\
    &\stackrel{\text{\eqmakebox[a][c]{$x_j=1$}}}{=}
	\frac{1}{n}
	\sum_{j\colon x_j=1}
	\EE\big[
		\indbr{X_i=1}
		\frac{\indbr{X_j=0}}{\sum_{l} \indbr{X_l=1}\indbr{x_l=0}}
	\big]\\
	&\stackrel{\text{\eqmakebox[a][c]{\eqref{eq:zmiana}}}}{=}
	\frac{1}{n}
	\sum_{j\colon x_j=1}
	\EE\big[
		\indbr{X_i=1}
		\frac{\indbr{X_j=0}}{\sum_l \indbr{X_l=0}\indbr{x_l=1}}
	\big]\\
    &\stackrel{\text{\eqmakebox[a][c]{}}}{=}
	\frac{1}{n}
	\PP\big(
		X_i=1	
	\big)\le \frac{1}{n}.
	\end{split}
	\end{align}
	
Combining the estimates~\eqref{eq:CaseI_I}, \eqref{eq:CaseI_II} and \eqref{eq:CaseI_III} with~\eqref{eq:CaseI_start} yields \eqref{eq:stability-simplified} and thus the stability condition~\eqref{eq:stability-condition} with $R=2$ in the case $x_i=0$.
The case $x_i=1$ is analogous, the main difference being that in~\eqref{eq:CaseI_III} we use \eqref{eq:CB_coupling2} in place of~\eqref{eq:CB_coupling} from Lemma~\ref{L:CB_coupling}.

Together the two cases give the induction step and conclude the proof of the proposition.
\end{proof}

Let us conclude this section with the proof of Lemma~\ref{L:CB_coupling}.

\begin{proof}[Proof of Lemma~\ref{L:CB_coupling}]
For $x\in\calBn$, let $\kappa(x) = \sum_i x_i$ and let $B$ be a vector of independent Bernoulli random variables with probabilities of success given by $p=(p_1,\ldots,p_n)$.
Consider three $\calBn$-valued random variables: $\widehat{Z}\sim\mathcal{L}(B\,\vert\,\kappa(B)=k)$, $Z'\sim\mathcal{L}(B\,\vert\,\kappa(B)=k-1)$ and $Z$ such that for all $x,y \in \calBn$,
\begin{align}\label{eq:conditional-probability-ZZ}
  \PP(Z = y|Z' = x) = h(y,x),
\end{align}
where
\begin{equation*}
	h(y,x) =
	\EE
	\Big[	
	\frac{\indbr{\widehat{Z}_r=1}}{\sum_{l}\indbr{\widehat{Z}_l=1}\indbr{x_l=0}}
	\Big]
\end{equation*}
if $y=x+e_r$ for some $r\in[n]$ and $\kappa(x)=k-1$, and  $h(y,x)=0$ otherwise.
Note that for $x\in\calBn$ such that $\kappa(x)=k-1$,  $\sum_{l}\indbr{\widehat{Z}_l=1}\indbr{x_l=0} > 0$ with probability one, so $h(y,x)$ is well defined.
Moreover, for such $x$
\[
	\sum_{y\in \calBn} h(y,x) =
	\sum_{r\colon x_r = 0} h(x+e_r,x) = 1,
\]
which guarantees the existence of the couple $(Z,Z')$ satisfying \eqref{eq:conditional-probability-ZZ}. Thus to prove \eqref{eq:CB_coupling} it is enough to show that $Z \sim \widehat{Z}$, i.e., that $\sum_{x \in \calBn}  h(y,x)\PP(Z'=x)=\PP(\widehat{Z}=y)$ for any $y\in\calBn$ such that $\kappa(y)=k$.

Observe that for any $r\in [n]$ such that $x_r=0$ and $\kappa(x) = k-1$
\begin{equation}\label{eq:CB_coupling_quotient}
	\frac{\Prob{Z'=x}}{\PP({\widehat{Z}=x+e_r})}
	=
	\frac{\Prob{B=x}}{\Prob{B=x+e_r}}
	\frac{\Prob{\kappa(B)=k}}{\Prob{\kappa(B)=k-1}}
	=
	\frac{1-p_r}{p_r}
	\frac{\Prob{\kappa(B)=k}}{\Prob{\kappa(B)=k-1}}.
\end{equation}
Moreover, for any $f\colon\calBn\to\RR$ and $r\in[n]$,
\begin{equation}\label{eq:CB_measure_change}
	\EE \big[f(B)\indbr{B_r=1}\big] \frac{1-p_r}{p_r}
	=
	\EE \big[f(B+e_r)\indbr{B_r=0}\big].
\end{equation}

We use~\eqref{eq:CB_coupling_quotient} and~\eqref{eq:CB_measure_change} to get that for any such $y$ and any $r\in[n]$ such that $y_r=1$ and $\kappa(y)=k$,
\begin{align}\label{eq:CB_equivalence}
 	\notag
	\frac{h(y,y-e_r)\PP(Z'=y-e_r)}{\PP(\widehat{Z}=y)}
	&\stackrel{\eqref{eq:CB_coupling_quotient}}{=}
	h(y,y-e_r)
	\frac{1-p_r}{p_r}
	\frac{\Prob{\kappa(B)=k}}{\Prob{\kappa(B)=k-1}}\\
\begin{split}
	&=
	\EE
	\Big[	
	\frac{\indbr{B_r=1}\indbr{\kappa(B)=k}}{\indbr{B_r=1}+\sum_{l\neq r}\indbr{B_l=1}\indbr{y_l=0}}
	\Big]	
	\frac{(1-p_r)/p_r}{\Prob{\kappa(B)=k-1}}\\
	&\stackrel{\eqref{eq:CB_measure_change}}{=}
	\EE
	\Big[	
	\frac{\indbr{B_r=0}\indbr{\kappa(B)=k-1}}{\indbr{B_r=0}+\sum_{l\neq r}\indbr{B_l=1}\indbr{y_l=0}}
	\Big]	
	\frac{1}{\Prob{\kappa(B)=k-1}}\\
	&=
	\EE
	\Big[	
	\frac{\indbr{B_r=0}\indbr{\kappa(B)=k-1}}{\indbr{B_r=0}+\sum_{l\neq r}\indbr{B_l=0}\indbr{y_l=1}}
	\Big]	
	\frac{1}{\Prob{\kappa(B)=k-1}}\\
	&=
	\EE
	\Big[	
	\frac{\indbr{Z'_r=0}}{\sum_{l}\indbr{Z'_l=0}\indbr{y_l=1}}
	\Big]
\end{split}	
\end{align}
where the penultimate step comes from the fact that for any $u,v$ such that $\kappa(u)=\kappa(v)$ one has $\sum \indbr{u=0}\indbr{v=1}=\sum \indbr{u=1}\indbr{v=0}$ applied to $u=\xi_r(B)$, $v=\xi_r(y)$, where $\xi_r$ is the projection from $\calBn$ to $\mathcal{B}_{n-1}$ obtained by skipping the $r$-th coordinate (note that if $B_r=0$ and $\kappa(B)=k-1$ then $\kappa(u)=\kappa(v)=k-1$).
Therefore by~\eqref{eq:CB_equivalence}
\[
	\frac{\sum_x h(y,x)\PP(Z'=x)}{\PP(\widehat{Z}=y)}
	=
	\frac{\sum_{r\colon y_r = 1} h(y,y-e_r)\PP(Z'=y-e_r)}{\PP(\widehat{Z}=y)}
	=1,
\]
which completes the proof of~\eqref{eq:CB_coupling}.
The equality~\eqref{eq:CB_coupling2} follows again by~\eqref{eq:CB_equivalence}.
\end{proof}

\bibliographystyle{amsplain}
\bibliography{SCP-concentration}
\end{document}